\def\nmo{{n\hn-\nh1}}
\def\hyp{\hskip.5pt\vbox
{\hbox{\vrule width2.5ptheight0.5ptdepth0pt}\vskip2pt}\hskip.5pt}
\def\hs{\hskip.7pt}
\def\hh{\hskip.4pt}
\def\nh{\hskip-.7pt}
\def\bg{\varPi}
\def\rd{\delta}
\def\lp{\alpha}
\def\stb{\varSigma}
\def\sg{\mathcal{G}}
\def\tvs{\mathcal{V}}
\def\tws{\mathcal{W}}
\def\w{^{\phantom i}}
\def\nnh{\hskip-1.5pt}
\def\hn{\hskip-.4pt}
\def\bbR{\mathrm{I\!R}}%{\mathsf{I}\hskip-14pt\bars\hskip-441.9pt\mathsf{R}}
\def\rn{\bbR\nh^n}
\def\bbQ{{\mathchoice{\setbox0=\hbox{$\displaystyle\rm Q$}\hbox{\raise
0.15\ht0\hbox to0pt{\kern0.4\wd0\vrule height0.8\ht0\hss}\box0}}
{\setbox0=\hbox{$\textstyle\rm Q$}\hbox{\raise
0.15\ht0\hbox to0pt{\kern0.4\wd0\vrule height0.8\ht0\hss}\box0}}
{\setbox0=\hbox{$\scriptstyle\rm Q$}\hbox{\raise
0.15\ht0\hbox to0pt{\kern0.4\wd0\vrule height0.7\ht0\hss}\box0}}
{\setbox0=\hbox{$\scriptscriptstyle\rm Q$}\hbox{\raise
0.15\ht0\hbox to0pt{\kern0.4\wd0\vrule height0.7\ht0\hss}\box0}}}}
\newcommand{\bbC}{{\mathchoice {\setbox0=\hbox{$\displaystyle\mathrm{C}$}
\hbox{\hbox to0pt{\kern0.4\wd0\vrule height0.9\ht0\hss}\box0}} 
{\setbox0=\hbox{$\textstyle\mathrm{C}$}\hbox{\hbox 
to0pt{\kern0.4\wd0\vrule height0.9\ht0\hss}\box0}} 
{\setbox0=\hbox{$\scriptstyle\mathrm{C}$}\hbox{\hbox 
to0pt{\kern0.4\wd0\vrule height0.9\ht0\hss}\box0}} 
{\setbox0=\hbox{$\scriptscriptstyle\mathrm{C}$}\hbox{\hbox 
to0pt{\kern0.4\wd0\vrule height0.9\ht0\hss}\box0}}}}
\def\bbZ{\mathsf{Z\hskip-3.5ptZ}}
\def\dimr{\dim_{\hskip.3pt\bbR\hskip-1.7pt}\w}
\def\dimq{\dim_{\hskip.8pt\bbQ\hskip-1.7pt}\w}
\def\dimz{\dim_{\hskip.1pt\mathsf{Z\hskip-2.8ptZ}\hskip-1.7pt}\w}
\def\spr{\mathrm{span}_{\hskip-.2pt\bbR\hskip-1.7pt}\w}
\def\w{^{\phantom i}}
\def\af{af\hn\-fine}
\def\Af{Af\nh\-fine}
\begin{document}

\title*{Compact flat manifolds and reducibility}
% Use \titlerunning{Short Title} for an abbreviated version of
% your contribution title if the original one is too long
\author{Andrzej Derdzinski\orcidID{0000-0001-9015-0886} and\\
Paolo Piccione\orcidID{0000-0002-8929-5938}}
% Use \authorrunning{Short Title} for an abbreviated version of
% your contribution title if the original one is too long
\institute{Andrzej Derdzinski \at Department of Mathematics,
The Ohio State University, 231 W. 18th Avenue, Columbus, OH 43210, USA,
\email{andrzej@math.ohio-state.edu}
\and Paolo Piccione \at Departamento de Matem\'atica, 
Instituto de Matem\'atica e Estat\'\i stica, Universidade de S\~ao Paulo, 
Rua do Mat\~ao 1010, CEP 05508-900, S\~ao Paulo, SP, Brazil, 
\email{piccione@ime.usp.br}}
%
% Use the package "url.sty" to avoid
% problems with special characters
% used in your e-mail or web address
%
\maketitle

\abstract*{Hiss and Szcze\-pa\'n\-ski proved in 1991 that the holonomy group
of any compact 
flat Riemannian manifold, of dimension at least two, acts reducibly on the 
rational span of the Euclidean lattice associated with the manifold via the 
first Bieberbach theorem. Geometrically, their result states that such a 
manifold must admit a nonzero proper parallel distribution with compact 
leaves. We study algebraic and geometric properties of the sublattice-spanned 
holonomy-invariant rational vector subspaces that exist due to the above 
theorem, and of the resulting compact-leaf foliations of compact flat 
manifolds. The class consisting of the former subspaces, in addition to being 
closed under spans and intersections, also turns out to admit (usually 
nonorthogonal) complements. As for the latter foliations, we provide 
descriptions, first -- and foremost -- of the intrinsic geometry of their 
generic leaves in terms of that of the original flat manifold and, secondly -- 
as an essentially obvious afterthought -- of the leaf-space orbifold. The 
general conclusions are then illustrated by examples in the form of 
generalized Klein bottles.}

\abstract{Hiss and Szcze\-pa\'n\-ski proved in 1991 that the holonomy group of
any compact 
flat Riemannian manifold, of dimension at least two, acts reducibly on the 
rational span of the Euclidean lattice associated with the manifold via the 
first Bieberbach theorem. Geometrically, their result states that such a 
manifold must admit a nonzero proper parallel distribution with compact 
leaves. We study algebraic and geometric properties of the sublattice-spanned 
holonomy-invariant rational vector subspaces that exist due to the above 
theorem, and of the resulting compact-leaf foliations of compact flat 
manifolds. The class consisting of the former subspaces, in addition to being 
closed under spans and intersections, also turns out to admit (usually 
nonorthogonal) complements. As for the latter foliations, we provide 
descriptions, first -- and foremost -- of the intrinsic geometry of their 
generic leaves in terms of that of the original flat manifold and, secondly -- 
as an essentially obvious afterthought -- of the leaf-space orbifold. The 
general conclusions are then illustrated by examples in the form of 
generalized Klein bottles.}

\section{Introduction}\label{in}
As shown by Hiss and Szcze\-pa\'n\-ski 
\cite[the corollary in Sect.\ 1]{hiss-szczepanski}, on any compact flat 
Riemannian manifold $\,\mathcal{M}\hs$ with $\,\dim\mathcal{M}=n\ge2\,$ there 
exists a parallel distribution $\,D\hs$ of dimension $\,k$, where $\,0<k<n$, 
such that the leaves of $\,D\hs$ are all compact. In the Appendix we 
reproduce the original algebraic phrasing of their result and mention a
stronger version of it, established more recently by Lu\-tow\-ski
\cite{lutowski}, which implies that -- unless $\,\mathcal{M}\hs$ is a flat
torus -- there exist at least two distributions $\,D\hs$ with the above
properties, having non\-e\-quiv\-a\-lent irreducible holonomy representations.

The present paper deals with geometric consequences of Hiss and
Szcze\-pa\'n\-ski's theorem. We do not pursue analogous ramifications of
Lu\-tow\-ski's generalization.

Our main results are Theorems~\ref{restr}, \ref{twopr}, \ref{gnric},
\ref{intrs}, \ref{orien} and Corollary~\ref{clsed}.
%, can be summarized asfollows.

Theorem~\ref{restr} describes geometries of the individual leaves 
$\,\mathcal{M}\hn'$ of a distribution $\,D$ on $\,\mathcal{M}\hs$ having 
the properties mentioned above, in terms of the short exact sequence 
$\,L\to\hs\bg\to H\hs$ formed by the lattice $\,L$, holonomy group $\,H\hs$ 
and Bie\-ber\-bach group $\,\bg$ associated with $\,\mathcal{M}$, and 
its analog $\,L\nh'\nh\to\hs\bg\hn'\nh\to H\nh'$ for $\,\mathcal{M}\hn'\nh$. 
Specifically, according to our Theorem~\ref{restr}(ii), $\,\bg\hn'$ 
(or, $\,L\nh'$) may be treated as a sub\-group of $\,\bg\,$ (or, of a 
certain Euclidean vector space 
$\,\tvs$), and $\,H\nh'$ as a homo\-mor\-phic image of a sub\-group of 
$\,H\nnh$.

In Theorem~\ref{twopr} we establish the existence, on 
every compact flat Riemannian manifold $\,\mathcal{M}\hs$ of dimension 
$\,n\ge2$, of {\it two\/} proper parallel distributions $\,D\hs$ and 
$\,\hat D$ with compact leaves, complementary to each other 
in the sense that $\,T\nh\mathcal{M}\nh=D\oplus\hat D\nh$.

Corollary~\ref{clsed} states that, in any manifold $\,\mathcal{M}\,$ as above,
the class of parallel distributions with compact leaves is closed under spans
and intersections.

Theorem~\ref{gnric} addresses the particularly simple form of the sequence 
$\,L\nh'\nh\to\hs\bg\hn'\nh\to H\nh'$ described by Theorem~\ref{restr}(ii),
arising in the case of leaves 
$\,\mathcal{M}\hn'$ which we call {\it generic}. 
The union of all generic leaves is an open dense subset of $\,\mathcal{M}$,
they all have the same 
triple $\,L\nh'\nnh,\bg\hn'\nnh,H\nh'\nnh$, and are mutually isometric. When 
all leaves of $\,D\,$ happen to be generic, they form a locally trivial bundle 
with compact flat manifolds serving both as the base and the fibre (the 
{\it fibration case}).

Theorems~\ref{intrs} and \ref{orien} describe the intersection numbers
of the leaves of the two mutually complementary foliations resulting
from Theorem~\ref{twopr}.

Aside from the holonomy group $\,H\nh'$ of each individual leaf 
$\,\mathcal{M}\hn'$ of $\,D\nh$, forming a part of its intrinsic 
(sub\-man\-i\-fold) geometry, $\,\mathcal{M}\hn'$ also gives rise to two 
``extrinsic'' holonomy groups, one arising since $\,\mathcal{M}\hn'$ is a leaf 
of the foliation $\,F\hskip-3.8pt_{\mathcal{M}}\w$ of $\,\mathcal{M}\,$ 
tangent to $\,D\nh$, the other coming from the normal connection of 
$\,\mathcal{M}\hn'\nnh$. Due to the flatness of the normal connection, the two 
extrinsic holonomy groups coincide, and are trivial for all generic leaves. In 
Sect.~\ref{ls} we briefly discuss the leaf space 
$\,\mathcal{M}/\nh F\hskip-3.8pt_{\mathcal{M}}\w$, pointing out that (not 
surprisingly!) $\,\mathcal{M}/\nh F\hskip-3.8pt_{\mathcal{M}}\w$ is a flat  
compact or\-bi\-fold which, in the fibration case mentioned above, constitutes
the base manifold of the bundle.

Our results are illustrated by examples (generalized Klein bottles, 
Sect.~\ref{gk}), where both the fibration and non-fibration cases occur, 
depending on the choice of $\,D\nh$. 

In Sect.~\ref{rh} we discuss certain analogs of the correspondence between
Bie\-ber\-bach groups and compact flat manifolds: one provided by
al\-most-Bie\-ber\-bach groups and in\-fra-nil\-man\-i\-folds, the other by 
the group $\,\mathrm{Spin}\hs(m,1)\,$ acting on the or\-tho\-nor\-mal-frame
bundle of the hyperbolic $\,m$-space, leading to quotient manifolds that 
include some compact locally symmetric 
pseu\-\hbox{do\hskip1pt-}\hskip-.7ptRiem\-ann\-i\-an Ein\-stein manifolds.

An earlier version \cite{derdzinski-piccione} of this paper is cited by  
\cite{bettiol-derdzinski-mossa-piccione}, and therefore still available on the 
arXiv. The presentation in \cite{derdzinski-piccione} was -- as we eventually
realized -- rather far from read\-er-friend\-ly, which prompted us to
thoroughly rewrite the whole text.

\section{Preliminaries}\label{pr}
Manifolds, mappings (except in Lemma~\ref{opdns}) and tensor fields, such as
bundle and covering projections, sub\-man\-i\-fold inclusions, and Riemannian
metrics, are by definition of class $\,C^\infty\nnh$. Sub\-man\-i\-folds need
not carry the subset topology, and a manifold may be disconnected (although,
being required to satisfy the second countability axiom, it must have at most
countably many connected components). Con\-nect\-ed\-ness/com\-pact\-ness of a 
sub\-man\-i\-fold always refers to its own topology, and implies the same for 
its underlying set within the ambient manifold. Thus, a compact 
sub\-man\-i\-fold is always endowed with the subset topology.

By a {\it distribution\/} on a manifold $\,\mathcal{N}\hs$ we mean, as 
usual, a (smooth) vector sub\-bundle $\,D\,$ of the tangent bundle 
$\,T\mathcal{N}\nh$. An {\it integral manifold\/} of $\,D\,$ is any 
sub\-man\-i\-fold $\,\mathcal{L}\,$ of $\,\mathcal{N}\hs$ with 
$\,T\hskip-2.8pt_x\w\nh\mathcal{L}=D\nnh_x\w$ whenever $\,x\in\mathcal{L}$.
The maximal connected integral manifolds of $\,D\,$ will also be referred to
as the {\it leaves\/} of $\,D\nh$. If $\,D\,$ is integrable, 
its leaves form the foliation associated with $\,D\nh$. We call $\,D\,$ 
{\it pro\-ject\-a\-ble\/} under a mapping 
$\,\psi:\mathcal{N}\nh\to\hat{\mathcal{N}}$ onto a distribution 
$\,\hat D\,$ on the target manifold $\,\hat{\mathcal{N}}\hs$ if 
$\,d\psi\nh\nnh_x\w(D\nnh_x\w)=\hat D\nnh_{\psi(x)}$ for all 
$\,x\in\mathcal{N}\nnh$.
\begin{remark}\label{covpr}The following facts are well known: (c) is the 
compact case of Ehres\-mann's fibration theorem 
\cite[Corollary 8.5.13]{dundas}; (b) follows from (c). For (a), see 
\cite[pp.\ 43--44 and 61--62]{kobayashi-nomizu} -- note that finiteness
trivially implies proper discontinuity.
\begin{enumerate}
  \def\theenumi{{\rm\alph{enumi}}}
\item Free dif\-feo\-mor\-phic actions of finite groups on manifolds are 
properly discontinuous, leading to covering projections onto the 
resulting quotient manifolds.
\item Any lo\-cal\-ly-dif\-feo\-mor\-phic mapping from a compact manifold into 
a connected manifold is a (surjective) finite covering projection.
\item More generally, the phrases `lo\-cal\-ly-dif\-feo\-mor\-phic mapping' 
and `finite covering projection' in (b) may be replaced with {\it 
submersion\/} and {\it fibration}. 
\end{enumerate}
\end{remark}
\begin{lemma}\label{cptlv}
Given manifolds\/ $\,\hat{\mathcal{M}}\,$ and\/ $\,\mathcal{M}\,$ with
distributions\/ $\,\hat D\,$ and\/ $\,D$, let\/ $\,\hat D\,$ be
pro\-ject\-a\-ble onto\/ $\,D\,$ under a locally dif\-feo\-mor\-phic
surjective mapping\/ $\,\psi:\hat{\mathcal{M}}\to\mathcal{M}$.
\begin{enumerate}
  \def\theenumi{{\rm\roman{enumi}}}
\item The\/ $\,\psi$-im\-age of any leaf of\/ $\,\hat D\,$ is a connected 
integral manifold of\/ $\,D\nh$.
\item Integrability of\/ $\,\hat D\,$ implies that of\/ $\,D\nh$.
\item For any compact leaf\/ $\,\mathcal{L}\,$ of $\,\hat D\nh$, the image\/ 
$\,\mathcal{L}\nh'\nh=\psi(\mathcal{L})\,$ is a compact leaf of $\,D\nh$, and 
the restriction\/ $\,\psi:\mathcal{L}\nh\to\mathcal{L}\nh'$ constitutes 
a covering projection.
\item If the leaves of\/ $\,\hat D\,$ are all compact, so are those of\/ 
$\,D\nh$.
\end{enumerate}
\end{lemma}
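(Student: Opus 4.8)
The plan is to extract a single local picture from the hypotheses and feed it into each of the four claims in turn. Since $\psi$ is locally dif\-feo\-mor\-phic, every $d\psi_x$ is a linear isomorphism, so the pro\-ject\-a\-bil\-i\-ty condition $d\psi_x(\hat D_x)=D_{\psi(x)}$ is equivalent to $\hat D_x=(d\psi_x)^{-1}(D_{\psi(x)})$, and in particular $\dim\hat D=\dim D$. First I would record the local normal form: each $x\in\hat{\mathcal{M}}$ has a neighbourhood $U$ with $\psi|_U\colon U\to V:=\psi(U)$ a diffeomorphism carrying $\hat D|_U$ onto $D|_V$; consequently $\psi$ sends every embedded integral manifold of $\hat D$ inside $U$ dif\-feo\-mor\-phic\-al\-ly onto an embedded integral manifold of $D$ inside $V$, and conversely. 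For (i), the inclusion of a leaf $\mathcal{L}$ of $\hat D$ is an immersion, so $f:=\psi|_{\mathcal{L}}\colon\mathcal{L}\to\mathcal{M}$ is an immersion with $df_x(T_x\mathcal{L})=d\psi_x(\hat D_x)=D_{f(x)}$; thus $f$ is everywhere tangent to $D$, the local normal form identifies its image near each point with a single plaque of $D$, and so $\psi(\mathcal{L})$, carrying the immersed structure transported from $\mathcal{L}$, is a connected integral manifold of $D$.

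For (ii) I would assume $\hat D$ integrable and argue that $D$ then admits an integral manifold through every point. Indeed, given $y\in\mathcal{M}$, surjectivity of $\psi$ provides $x$ with $\psi(x)=y$, and the leaf of $\hat D$ through $x$ maps, by (i), onto an integral manifold of $D$ through $y$. Existence of an integral manifold through each point is equivalent to integrability: local sections of $D$ are tangent to such an integral manifold along it, hence so are their brackets, giving $[D,D]\subseteq D$, and Frobenius concludes. Equivalently, one may note that in\-vol\-u\-tiv\-i\-ty of $\hat D$ pushes forward through the local dif\-feo\-mor\-phism $\psi$, which is surjective, so $D$ is in\-vol\-u\-tive everywhere.

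For (iii), let $\mathcal{L}$ be a compact leaf of $\hat D$. The local normal form makes $f=\psi|_{\mathcal{L}}\colon\mathcal{L}\to\mathcal{L}':=f(\mathcal{L})$ a local dif\-feo\-mor\-phism for the leaf topologies, with $\mathcal{L}$ compact and $\mathcal{L}'$ connected, so Remark~\ref{covpr}(b) at once yields that $f$ is a finite covering projection and that $\mathcal{L}'$ is compact. That $\mathcal{L}'$ is a \emph{leaf} follows from maximality: it is a connected integral manifold by (i); being compact it is closed in $\mathcal{M}$, hence closed in any connected integral manifold $\mathcal{N}\supseteq\mathcal{L}'$, while the local normal form makes it open in $\mathcal{N}$, so $\mathcal{L}'=\mathcal{N}$. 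Part (iv) is then immediate: every leaf of $D$ meets $\psi$ in some $y=\psi(x)$, the leaf of $\hat D$ through $x$ is compact by hypothesis, and by (iii) its image is a compact leaf of $D$ through $y$, necessarily the given one.

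The hard part will be the passage, in (i) and (iii), from the purely local ``plaque to plaque'' behaviour to a statement about the set $\psi(\mathcal{L})$: because $f=\psi|_{\mathcal{L}}$ need not be injective (already the universal covering $\bbR\to\bbR/\bbZ$ shows this), one must interpret $\psi(\mathcal{L})$ as the immersed image of $f$ rather than as a subset with induced structure, and check that the locally embedded plaques glue consistently instead of producing genuine crossings. This is precisely where the local-dif\-feo\-mor\-phism hypothesis is indispensable, since it forces $f$ to be everywhere tangent to $D$ and a local dif\-feo\-mor\-phism onto plaques, so no transverse self-intersection can occur; in the integrable setting of (ii) local uniqueness of integral manifolds removes the ambiguity entirely, and in the compact setting of (iii) Remark~\ref{covpr}(b) absorbs it cleanly, which is why (iii) is the linchpin from which (iv) drops out.
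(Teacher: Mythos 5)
Your proof is correct and follows essentially the same route as the paper's (much terser) argument: (i) from projectability and the local normal form of a local diffeomorphism, (ii) from the existence of an integral manifold of $D$ through every point, (iii) from (i) together with Remark~\ref{covpr}(b), and (iv) as an immediate consequence. The extra detail you supply -- the open-and-closed argument for maximality of $\mathcal{L}\nh'$ and the explicit discussion of why the image of a possibly non-injective $\psi|_{\mathcal{L}}$ still carries a well-defined integral-manifold structure -- is a careful elaboration of points the paper declares ``immediate from the definitions.''
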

\begin{proof}Assertion (i) is immediate from the definitions of a leaf and 
projectability, while (i) yields (ii) as integrability amounts to the 
existence of an integral manifold through every point. Remark~\ref{covpr}(b) 
and (i) give (iii). Now (iv) follows.
\end{proof}
\begin{lemma}\label{opdns}
Suppose that\/ $\,F\,$ is a mapping from a 
manifold\/ $\,\mathcal{W}$ into any set. If \,for every\/ 
$\,x\in\mathcal{W}\,$ there exists a dif\-feo\-mor\-phic identification of a 
neighborhood\/ $\,\mathcal{B}\nnh_x\w$ of\/ $\,x\,$ in\/ $\,\mathcal{W}\,$ 
with a unit open Euclidean ball centered at\/ $\,0\,$ under which\/ $\,x\,$ 
corresponds to\/ $\,0\,$ and\/ $\,F\hs$ becomes constant on each open 
straight-line interval of length\/ $\,1\,$ in the open Euclidean ball having\/ 
$\,0\,$ as an endpoint, then\/ $\,F\,$ is locally constant on some open dense 
subset of\/ $\,\mathcal{W}\nh$.
\end{lemma}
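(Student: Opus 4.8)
The plan is to study the open set $U\subseteq\mathcal{W}$ consisting of those points having a neighborhood on which $F$ is constant. This $U$ is by construction the largest open set on which $F$ is locally constant, so the assertion is exactly that $U$ is dense. Since density is a local property, it suffices to prove that $U$ is dense inside each model neighborhood $\mathcal{B}_x$, which I identify, via the given diffeomorphism, with the unit ball $B\subseteq\bbR^n$, where $n=\dim\mathcal{W}$ and $x\mapsto0$. I would then argue by induction on $n$, the case $n=0$ being trivial.

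Inside $B$ the hypothesis at $x$ says precisely that $F$ is constant along each punctured radius, so $F(y)=f(y/|y|)$ on $B\setminus\{0\}$ for a single map $f$ defined on the sphere $S^{n-1}$. Writing $F=f\circ\pi$ with the radial projection $\pi\colon B\setminus\{0\}\to S^{n-1}$, $\pi(y)=y/|y|$, local constancy of $f$ pulls back to local constancy of $F$; and since $\pi$ is a continuous open surjection, the $\pi$-preimage of a dense set is dense. Hence everything reduces to proving that $f$ is locally constant on a dense subset of $S^{n-1}$, and I would deduce this by checking that $f$, viewed as a map on the $(n-1)$-manifold $S^{n-1}$, satisfies the very hypothesis of the lemma, so that the inductive hypothesis applies.

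The heart of the argument, and the step I expect to be the main obstacle, is verifying that hypothesis for $f$ at an arbitrary $u_0\in S^{n-1}$: the difficulty is that the charts furnished at different points of $\mathcal{W}$ are a priori unrelated diffeomorphisms, so the straight radii of the chart at $x$ need not be straight in any other chart. To reconcile them I would apply the hypothesis at the point $y_0=u_0/2$, which lies on the radius through $u_0$, shrinking its chart $\phi_{y_0}$ so that its domain sits inside $B\setminus\{0\}$. Then $G:=\pi\circ\phi_{y_0}^{-1}$ is a submersion of a ball $B'$ onto a neighborhood of $u_0$ with $G(0)=u_0$, and $f\circ G=F\circ\phi_{y_0}^{-1}$ is radially constant. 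Slicing $B'$ by the hyperplane $\Pi=(\ker dG_0)^{\perp}$ through $0$, the restriction $G_\Pi:=G|_{\Pi\cap B'}$ has invertible differential at $0$, hence is, after shrinking, a diffeomorphism of a ball $\mathcal{O}\subseteq\Pi$ onto a neighborhood of $u_0$ in $S^{n-1}$; and since every radius of $\mathcal{O}$ is also a radius of $B'$, along which $f\circ G$ is constant, the function $f\circ G_\Pi$ is radially constant on $\mathcal{O}$. Thus $G_\Pi^{-1}$, composed with a linear rescaling of $\mathcal{O}$ to the unit $(n-1)$-ball, is a chart realizing the radial-constancy hypothesis for $f$ at $u_0$.

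Granting this, the inductive hypothesis provides a dense open $U_f\subseteq S^{n-1}$ on which $f$ is locally constant; by the second paragraph its $\pi$-preimage is an open dense subset of $B\setminus\{0\}$ on which $F$ is locally constant, so $U$ is dense in $B\setminus\{0\}$ and therefore in $B$. As the neighborhoods $\mathcal{B}_x$ cover $\mathcal{W}$, density of $U$ in each of them yields density of $U$ in $\mathcal{W}$, which completes the induction and the proof.
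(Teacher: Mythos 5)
Your proposal is correct and follows essentially the same route as the paper's proof: induction on $\dim\mathcal{W}$, factoring $F=f\circ\pi$ through the radial projection onto the sphere, and verifying the hypothesis for $f$ by slicing the chart at a point $y_0$ on a radius with a codimension-one disk transverse to $\ker dG_0$ (the paper phrases this as choosing a codimension-one ball of radial intervals on which $\mu$ is an embedding). Your write-up merely makes the submersion/slicing step and the pullback of density under $\pi$ more explicit.
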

\begin{proof}We use induction on $\,n=\dim\mathcal{W}\nh$. The case $\,n=1\,$ 
being trivial, let us suppose that the assertion holds in dimension $\,\nmo\,$ 
and consider a mapping $\,F\hs$ from an $\,n$-di\-men\-sion\-al manifold 
$\,\mathcal{W}\nh$, satisfying our hypothesis, along with an embedded open 
Euclidean ball 
$\,\mathcal{B}\nnh_x\w\nh\subseteq\mathcal{M}\,$ ``centered'' at a fixed point 
$\,x$, as in the statement of the lemma. The constancy of $\,F\nh$ along 
the fibres of the normalization projection 
$\,\mu:\mathcal{B}\nnh_x\w\nh\smallsetminus\{x\}\to\mathcal{S}$ onto the unit 
$\,(n\nh-\nh1)$-sphere $\,\mathcal{S}\,$ gives rise to a mapping 
$\,G\,$ with the domain $\,\mathcal{S}\,$ and $\,F=G\circ\mu$. Let us now fix
$\,s\in\mathcal{S}$, any $\,y\in\mathcal{B}\nnh_x\w\nh\smallsetminus\{x\}\,$
with $\,\mu(y)=s$, and an embedded open 
Euclidean ball $\,\mathcal{B}\nnh_y\w$ ``centered'' at $\,y$, such that 
$\,F\hs$ is constant on each radial open interval in $\,\mathcal{B}\nnh_y\w$. 
The obvious sub\-mer\-sion property of $\,\mu$ allows us to pass from 
$\,\mathcal{B}\nnh_y\w$ to a smaller concentric ball and then choose a 
co\-di\-men\-sion-one open Euclidean ball $\,\mathcal{B}'_{\!y}$ arising as a 
union of radial intervals within this smaller version of 
$\,\mathcal{B}\nnh_y\w$, for which $\,\mu:\mathcal{B}'_{\!y}\to\mathcal{S}\,$ 
is an embedding. The assumption of the lemma thus holds when 
$\,\mathcal{W}\hs$ and $\,F\hs$ are replaced by $\,\mathcal{S}\,$ and $\,G$, 
leading to the local constancy of $\,G\,$ (and $\,F$) on a dense open set in 
$\,\mathcal{S}$ (and, respectively, in 
$\,\mathcal{B}\nnh_x\w\nh\smallsetminus\{x\}$). Since the union of the latter 
sets over all $\,x\,$ is obviously dense in $\,\mathcal{W}\nh$, our claim 
follows.
\end{proof}
We have the following well-known consequence of the inverse mapping theorem
combined with the Gauss lemma for sub\-man\-i\-folds.
\begin{lemma}\label{nrexp}Given a compact 
sub\-man\-i\-fold\/ $\,\mathcal{M}\hn'$ of a Riemannian manifold\/ 
$\,\mathcal{M}$, every sufficiently small\/ $\,\rd\in(0,\infty)\,$ has the
following properties.
\begin{enumerate}
  \def\theenumi{{\rm\alph{enumi}}}
\item The normal exponential mapping restricted to the 
radius\/ $\,\rd\,$ open-disk sub\-bun\-dle\/ $\,\mathcal{N}\hskip-3pt_\rd\w$
of the normal bundle of\/ $\,\mathcal{M}\hn'$ constitutes a 
dif\-feo\-mor\-phi
sm\/ $\,\mathrm{Exp}^\perp\nnh:\mathcal{N}\hskip-3pt_\rd\w
\to\nh\mathcal{M}'\hskip-4.1pt_\rd\w$ 
onto the open sub\-man\-i\-fold\/ $\,\mathcal{M}'\hskip-4.1pt_\rd\w$ of\/
$\,\mathcal{M}\,$ equal to the pre\-im\-age of\/ $\,[\hh0,\rd)\,$ under the
function\/ $\,\mathrm{dist}\hh(\mathcal{M}\hn'\nh,\,\cdot\,)\,$ of metric
distance from\/ $\,\mathcal{M}\hn'\nnh$.
\item Each\/ $\,x\in\mathcal{M}'\hskip-4.1pt_\rd\w$ has a unique point\/  
$\,y\in\mathcal{M}\hn'$ nearest to\/ $\,x$, which is simultaneously the unique 
point\/ $\,y\,$ of\/ $\,\mathcal{M}\hn'$ joined to\/ $\,x\,$ by a geodesic
in\/ $\,\mathcal{M}'\hskip-4.1pt_\rd\w$ normal to\/ $\,\mathcal{M}\hn'$ at
$\,y$, and the resulting assignment\/
$\,\mathcal{M}'\hskip-4.1pt_\rd\w\ni x\mapsto y\in\mathcal{M}\hn'$ coincides
with the composition of the inverse dif\-feo\-mor\-phism of\/
$\,\mathrm{Exp}^\perp\nnh:\mathcal{N}\hskip-3pt_\rd\w
\to\nh\mathcal{M}'\hskip-4.1pt_\rd\w\,$ 
followed by the nor\-mal-bun\-dle projection\/ 
$\,\mathcal{N}\hskip-3pt_\rd\w\to\nh\mathcal{M}\hn'\nnh$.
\item The\/ $\,\mathrm{Exp}^\perp$ images of length\/ $\,\rd\,$ radial line 
segments emanating from the zero vectors in the fibres of\/ 
$\,\mathcal{N}\hskip-3pt_\rd\w$ coincide with the length\/ $\,\rd\,$
minimizing geodesic segments in\/ $\,\mathcal{M}'\hskip-4.1pt_\rd\w$ emanating from\/ 
$\,\mathcal{M}\hn'$ and normal to\/ $\,\mathcal{M}\hn'\nnh$. They are also
normal to all the levels of\/ 
$\,\mathrm{dist}\hh(\mathcal{M}\hn'\nh,\,\cdot\,)\,$ in\/
$\,\mathcal{M}'\hskip-4.1pt_\rd\w$, and realize the minimum distance between any two
such levels within\/ $\,\mathcal{M}'\hskip-4.1pt_\rd\w$.
\end{enumerate}
\end{lemma}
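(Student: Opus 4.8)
\emph{The plan} is to obtain all three assertions from a single threshold on $\,\rd$, produced by combining the tubular-neighborhood construction (through the inverse mapping theorem) with the submanifold Gauss lemma, both of which I treat as known.

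For (a), I would first check that the differential of $\,\mathrm{Exp}^\perp$ at each zero vector $\,0_y\,$ of the normal bundle is an isomorphism: under the splitting of the tangent space to the total space into its horizontal and vertical summands, this differential restricts to the identity of $\,T_y\mathcal{M}'$ on the horizontal part (since $\,\mathrm{Exp}^\perp$ agrees with the inclusion along the zero section) and to the ordinary exponential differential at $\,0$, again the identity, on the vertical part. The inverse mapping theorem then makes $\,\mathrm{Exp}^\perp$ a local diffeomorphism near the zero section, and I would upgrade this to a \emph{global} diffeomorphism on $\,\mathcal{N}_\rd\,$ for every sufficiently small $\,\rd\,$ by contradiction: otherwise there would exist distinct normal vectors $\,v_k\ne w_k\,$ with $\,|v_k|,|w_k|\to0\,$ and $\,\mathrm{Exp}^\perp(v_k)=\mathrm{Exp}^\perp(w_k)$; compactness of $\,\mathcal{M}'$ would then force both images, and hence the footpoints of $\,v_k\,$ and $\,w_k$, to converge to one point $\,y_0\in\mathcal{M}'$, so that $\,v_k,w_k\,$ eventually lie in a neighborhood on which $\,\mathrm{Exp}^\perp$ is injective, a contradiction.

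Still within (a), I would identify the image $\,\mathrm{Exp}^\perp(\mathcal{N}_\rd)\,$ with the metric tube $\,\{\mathrm{dist}(\mathcal{M}',\cdot)<\rd\}$. The inclusion of the former in the latter is immediate, as $\,t\mapsto\mathrm{Exp}^\perp(tv)\,$ joins $\,\mathcal{M}'$ to $\,\mathrm{Exp}^\perp(v)\,$ along a curve of length $\,|v|<\rd$. For the reverse inclusion, any $\,x\,$ with $\,\mathrm{dist}(\mathcal{M}',x)<\rd\,$ has, by compactness, a nearest point $\,y\in\mathcal{M}'$, and the first-variation formula forces the minimizing geodesic from $\,y\,$ to $\,x\,$ to meet $\,\mathcal{M}'$ orthogonally; its initial velocity is thus a normal vector of length $\,{<}\,\rd\,$ carried to $\,x\,$ by $\,\mathrm{Exp}^\perp$. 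Shrinking $\,\rd\,$ once more, if needed, keeps all such minimizing geodesics inside the tube already constructed.

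Assertions (b) and (c) are then read off from the diffeomorphism in (a). Uniqueness of the nearest point $\,y$, and its coincidence with the footpoint of the normal geodesic through $\,x$, is exactly the injectivity of $\,\mathrm{Exp}^\perp$ together with the orthogonality just noted, while the displayed assignment $\,x\mapsto y\,$ is, by definition, $\,(\mathrm{Exp}^\perp)^{-1}$ followed by the bundle projection. For (c), the radial segments are normal geodesics by construction and minimizing by (b), and their orthogonality to the levels of $\,\mathrm{dist}(\mathcal{M}',\cdot)$, as well as the fact that they realize the minimal distance between any two such levels, is precisely the content of the Gauss lemma for submanifolds, applied to the pullback of the metric under $\,\mathrm{Exp}^\perp$. \emph{The main obstacle} I anticipate is selecting one $\,\rd\,$ valid throughout: the local-to-global injectivity in (a) and the containment of nearest-point geodesics both rest on compactness arguments that must be made uniform in the footpoint, and this same $\,\rd\,$ must also support the Gauss-lemma orthogonality underlying (c).
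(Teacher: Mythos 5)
Your argument is correct and follows exactly the route the paper indicates: the paper offers no proof of this lemma beyond the remark that it is a well-known consequence of the inverse mapping theorem combined with the Gauss lemma for submanifolds, and your proposal is a faithful elaboration of precisely those two ingredients (local diffeomorphism at the zero section upgraded to global injectivity by compactness, nearest-point orthogonality via first variation, and the Gauss lemma for the level-set statements in (c)). The uniform choice of $\rd$ you worry about is handled by the standard compactness argument you already give, so there is no gap.
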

\begin{lemma}\label{baire}
In a complete metric space, any countable union of closed sets with empty 
interiors has an empty interior.
\end{lemma}
\begin{proof}This is Baire's theorem \cite[p.\ 187]{edwards} stating, 
equivalently, that the intersection of countably many dense open subsets 
is dense.
\end{proof}

\section{Free Abel\-i\-an groups}\label{fa}
The following well-known facts, cf.\ \cite[p.\ 2]{arnold}, are gathered here
for easy reference.

For a finitely generated Abel\-i\-an group $\,G$, being tor\-sion-free amounts 
to being free, in the sense of having a $\,\bbZ\nh$-ba\-sis, by which one 
means an ordered $\,n$-tuple $\,e\nnh_1\w,\dots,e\nnh_n\w$ of elements of $\,G\,$ such 
that every $\,x\in G\,$ can be uniquely expressed as an integer combination of 
$\,e\nnh_1\w,\dots,e\nnh_n\w$. The integer $\,n\ge0$, also denoted by $\,\dimz G$, 
is an algebraic invariant of $\,G$, called its {\it rank}, or {\it Bet\-ti
number}, or $\,\bbZ\nh${\it-di\-men\-sion}.

Any finitely generated Abel\-i\-an group $\,G\,$ is isomorphic to the direct 
sum of its (necessarily finite) torsion sub\-group $\,S\,$ and the free group 
$\,G/S$. We then set $\,\dimz G=\dimz\hskip2pt[G/S]$. A sub\-group $\,G'$ (or, 
a homo\-mor\-phic image $\,G'$) of such $\,G$, in addition to being again 
finitely generated and Abel\-i\-an, also satisfies the inequality 
$\,\dimz G'\le\dimz G$, strict unless $\,G/G'$ is finite (or, 
respectively, the homo\-mor\-phism in question has a finite kernel).
\begin{lemma}\label{dirsm}
A sub\-group\/ $\,G'$ of a finitely generated free Abel\-i\-an group\/
$\,G\,$ 
constitutes a direct summand of\/ $\,G\,$ if and only if the quotient group\/ 
$\,G/G'$ is tor\-sion-free.
\end{lemma}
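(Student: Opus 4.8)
The plan is to prove the two implications separately, the substantive one being that torsion-freeness of $\,G/G'$ forces $\,G'$ to split off.

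For the easy direction, I would assume $\,G=G'\oplus H\,$ for some subgroup $\,H\,$ and observe that the quotient projection $\,\pi:G\to G/G'\,$ restricts to an isomorphism $\,H\to G/G'$. Thus $\,G/G'\cong H$, and since $\,H\,$ is a subgroup of the finitely generated free Abel\-i\-an group $\,G$, it is itself free, hence tor\-sion-free; therefore so is $\,G/G'$.

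For the converse, assume $\,G/G'$ is tor\-sion-free. As a homo\-mor\-phic image of the finitely generated group $\,G$, it is again finitely generated and Abel\-i\-an, so by the tor\-sion-free-implies-free fact recalled at the start of this section it admits a $\,\bbZ$-ba\-sis. The key step -- and the crux of the whole argument -- is then a lifting: I would choose such a basis $\,b_1,\dots,b_r\,$ of $\,G/G'$ and, for each $\,i$, pick an arbitrary pre\-im\-age $\,e_i\in G\,$ with $\,\pi(e_i)=b_i$, letting $\,H\,$ be the subgroup of $\,G\,$ generated by $\,e_1,\dots,e_r$. Conceptually this is nothing but the projectivity of the free group $\,G/G'$, which makes the short exact sequence $\,0\to G'\to G\to G/G'\to0\,$ split; the explicit preimages simply realize a splitting.

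The remaining verification that $\,G=G'\oplus H\,$ is then routine bookkeeping. Given $\,x\in G$, writing $\,\pi(x)=\sum_i c_i b_i\,$ with $\,c_i\in\bbZ\,$ shows $\,x-\sum_i c_i e_i\in\ker\pi=G'$, so $\,G=G'+H$; and if $\,\sum_i c_i e_i\in G'$, then $\,\sum_i c_i b_i=0\,$ in $\,G/G'$, which forces all $\,c_i=0\,$ by the basis property, giving both $\,G'\cap H=\{0\}\,$ and the independence of $\,e_1,\dots,e_r$. I expect no real obstacle beyond the lifting step itself, which is exactly where the freeness (equivalently, tor\-sion-free\-ness) of $\,G/G'$ is indispensable.
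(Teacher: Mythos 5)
Your proof is correct and follows essentially the same route as the paper, which justifies the lemma by the remark immediately after it: lifting a $\,\bbZ\,$-basis of the (necessarily free) quotient $\,G/G'$ back to $\,G\,$ and combining it with $\,G'$ to split the extension. The paper merely phrases this slightly more generally, for an arbitrary surjection $\,\chi:P\to P'$ with free kernel and free image, and omits the easy converse direction that you spell out.
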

In fact, more generally, given a surjective homo\-mor\-phism 
$\,\chi:P\to P'$ between Abel\-i\-an groups $\,P,P'$ and elements 
$\,x\hskip-2.3pt_j\w,y\hskip-2pt_a\w\in P\hs$ (with $\,j,a\,$ ranging over
finite sets), such that $\,x\hskip-2.3pt_j\w$ and $\,\chi(y\hskip-2pt_a\w)\,$
happen to form $\,\bbZ\nh$-ba\-ses of 
$\,\mathrm{Ker}\hskip2.7pt\chi\,$ and, respectively, of $\,P'\nnh$, the system 
consisting of all $\,x\hskip-2.3pt_j\w$ and $\,y\hskip-2pt_a\w\,$ is a
$\,\bbZ\nh$-ba\-sis of 
$\,P\,$ (and so $\,P\,$ must be free). This is clear as every element of
$\,P'$ (or, of $\,P$) then can be uniquely expressed as an integer combination
of $\,\chi(y\hskip-2pt_a\w)\,$ (or, consequently, of
$\,x\hskip-2.3pt_j\w$ and $\,y\hskip-2pt_a\w$).
\begin{lemma}\label{surjc}
For each finitely generated sub\-group\/ $\,G\,$ of the additive group of a 
fi\-nite-di\-men\-sion\-al real vector space\/ $\,\tvs\nnh$, the 
intersection\/ $\,G\cap\nh\tvs\hn'$ with any vector sub\-space\/ 
$\,\tvs\hn'\subseteq\tvs\hs$ forms a di\-rect-sum\-mand sub\-group of\/ 
$\,G$. Furthermore, the class of di\-rect-sum\-mand sub\-groups of\/ $\,G\,$
is closed under intersections, finite or not.
\end{lemma}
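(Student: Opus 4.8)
The plan is to reduce both assertions to the criterion of Lemma~\ref{dirsm}, namely that a subgroup $G'\le G$ is a direct summand exactly when $G/G'$ is torsion-free. First I would note that $G$ is a finitely generated free abelian group: it is finitely generated by hypothesis, and torsion-free because it sits inside the additive group of the real vector space $\tvs$, so the facts recalled before Lemma~\ref{dirsm} make it free. Hence Lemma~\ref{dirsm} is available for every subgroup of $G$, and each claim reduces to checking that a suitable quotient has no torsion.

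For the intersection with a subspace, I would set $G'=G\cap\tvs'$ and suppose $x\in G$ with $mx\in G'$ for some positive integer $m$. Then $mx\in\tvs'$, and since $\tvs'$ is a real vector subspace it is closed under scaling by $1/m$, giving $x=(1/m)(mx)\in\tvs'$; as $x\in G$ as well, $x\in G\cap\tvs'=G'$. Thus $G/G'$ is torsion-free, and Lemma~\ref{dirsm} exhibits $G'$ as a direct summand.

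For the closure under intersections I would first rephrase the notion of direct summand intrinsically: by Lemma~\ref{dirsm}, $G'\le G$ is a direct summand if and only if it is \emph{saturated}, in the sense that $x\in G$ together with $mx\in G'$ for some positive integer $m$ forces $x\in G'$. This property is patently stable under intersections of arbitrary families: if every $G_\alpha$ is saturated and $mx\in\bigcap_\alpha G_\alpha$, then $mx\in G_\alpha$ yields $x\in G_\alpha$ for each $\alpha$, so $x\in\bigcap_\alpha G_\alpha$. Consequently $\bigcap_\alpha G_\alpha$ is saturated, and Lemma~\ref{dirsm} again identifies it as a direct summand, with the finite and infinite cases handled uniformly.

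I do not expect a genuine obstacle here; the only substantive choice is to work through the torsion-free-quotient characterization rather than through explicitly chosen complements. The latter route is a trap, since complements $G''_\alpha$ of the $G_\alpha$ interact badly under intersection, whereas the saturation condition collapses both assertions into one-line verifications.
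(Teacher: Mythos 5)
Your argument is correct and is exactly the route the paper takes: the paper's entire proof is the single sentence ``Both claims are obvious from Lemma~\ref{dirsm},'' and your write-up simply supplies the details it leaves implicit (freeness of $G$, the scaling argument showing $G/(G\cap\tvs\hn')$ is torsion-free, and the stability of the torsion-free-quotient, i.e.\ saturation, condition under arbitrary intersections).
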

Both claims are obvious from Lemma~\ref{dirsm}. Next, we have a 
straightforward exercise:
\begin{lemma}\label{prdsg}
If normal sub\-groups\/ $\,G'\nh,G''$ of a group\/ $\,G\,$ intersect trivially 
and every\/ $\,\gamma'\in G'$ commutes with every\/ $\,\gamma''\in G''\nnh$, 
then\/ 
$\,G'G''\nh=\{\gamma'\gamma'':(\gamma'\nnh,\gamma'')\in G'\nnh\times G''\hn\}$ 
is a normal sub\-group\/ of\/ $\,G$, and the assignment\/ 
$\,(\gamma'\nnh,\gamma'')\mapsto\gamma'\gamma''$ defines an iso\-mor\-phism\/ 
$\,G'\nnh\times G''\nh\to G'G''\nnh$.
\end{lemma}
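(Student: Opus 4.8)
The plan is to exhibit the stated bijection as a group isomorphism, deduce that $G'G''$ is a subgroup from its being the image of a homomorphism, and then verify normality directly. First I would introduce the map $\phi:G'\times G''\to G$ given by $\phi(\gamma',\gamma'')=\gamma'\gamma''$, whose image is by definition the set $G'G''$.

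The key step is to check that $\phi$ is a homomorphism. For two pairs one computes $\phi\bigl((\gamma_1',\gamma_1'')(\gamma_2',\gamma_2'')\bigr)=\gamma_1'\gamma_2'\gamma_1''\gamma_2''$, while $\phi(\gamma_1',\gamma_1'')\,\phi(\gamma_2',\gamma_2'')=\gamma_1'\gamma_1''\gamma_2'\gamma_2''$; these coincide precisely because the commutativity hypothesis permits transposing the interior factors $\gamma_1''$ and $\gamma_2'$. Since the image of a homomorphism is always a subgroup, it follows at once that $G'G''$ is a subgroup of $G$.

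For injectivity I would compute the kernel: if $\gamma'\gamma''=e$, then $\gamma'=(\gamma'')^{-1}$ lies in $G'\cap G''=\{e\}$ by the trivial-intersection hypothesis, forcing $\gamma'=\gamma''=e$. Hence $\phi$ is an injective homomorphism onto $G'G''$, which is exactly the asserted isomorphism $G'\times G''\to G'G''$. Normality of $G'G''$ in $G$ then follows from the normality of each factor: for $g\in G$ one has $g(\gamma'\gamma'')g^{-1}=(g\gamma'g^{-1})(g\gamma''g^{-1})$, and the two conjugates lie in $G'$ and $G''$ respectively, so their product remains in $G'G''$.

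As the paper itself indicates, there is really no main obstacle here; the statement is a straightforward exercise. The only places where the hypotheses are genuinely invoked are the homomorphism check, which uses element-wise commutativity, and the injectivity check, which uses the trivial intersection, so the proof amounts to little more than recording these two verifications together with the one-line conjugation computation for normality.
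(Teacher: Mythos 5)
Your proof is correct and is exactly the standard argument the paper has in mind when it dismisses this lemma as ``a straightforward exercise'' without supplying a proof: the commutativity hypothesis makes $(\gamma',\gamma'')\mapsto\gamma'\gamma''$ a homomorphism, the trivial intersection makes it injective, and normality of the factors gives normality of the image. Nothing is missing.
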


\section{Lattices and vector sub\-spaces}\label{lv}
Throughout this section $\,\tvs\hs$ denotes a fixed fi\-nite-di\-men\-sion\-al 
real vector space, and
\begin{equation}\label{cpl}
\mathrm{sub\-spaces\ }\,\tvs\hn'\nnh,\tvs\hn''\subseteq\tvs\hs\mathrm{\ with\
}\,\tvs=\tvs\hn'\nnh\oplus\tvs\hn''\mathrm{\ are\ called\ }\text{\it
complementary}
\end{equation}
{\it to each other}. 
As usual, we define a (full) {\it lattice\/} in $\,\tvs\hs$ to be any 
sub\-group $\,L\,$ of the additive group of $\,\tvs\hs$ generated by a basis 
of $\,\tvs\hs$ (which must consequently also be a $\,\bbZ\nh$-ba\-sis of 
$\,L$).
%$\,\mathcal{T}\nh=
The quotient Lie group $\,\tvs\nnh/\nh L\,$ then is a torus, 
and we use the term {\it sub\-to\-ri\/} when referring to its compact 
connected Lie sub\-groups.
\begin{definition}\label{lsbsp}Given a lattice $\,L\,$ in $\,\tvs\nnh$, by an 
$\,L${\it-sub\-space\/} of $\,\tvs$ we will mean any vector sub\-space 
$\,\tvs\hn'$ of $\,\tvs\hs$ spanned by $\,L\cap\tvs\hn'\nnh$. One may 
equivalently require $\,\tvs\hn'$ to be the span of just a subset of $\,L$, 
rather than specifically of $\,L\cap\tvs\hn'\nnh$.
\end{definition}
\begin{lemma}\label{cplvs}
For a lattice\/ $\,L\,$ in\/ $\,\tvs\nnh$, the parallel distribution on\/
$\,\tvs\hs$ tangent to any prescribed vector sub\-space\/ $\,\tvs\hn'$
projects onto a parallel distribution\/ $\,D\hs$ on the torus group\/
%flat \af\ torus\/ %$\,\mathcal{T}\nh=\hs
$\,\tvs\nnh/\nh L$. The leaves of\/ $\,D\,$ must be 
either all compact, or all noncompact, and they are compact if and only if\/ 
$\,\tvs\hn'$ is an\/ $\,L$-sub\-space, in which case the leaf of\/ $\,D$ 
through zero is a sub\-to\-rus of\/ $\,\tvs\nnh/\nh L$.
\end{lemma}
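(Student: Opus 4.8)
The plan is to use the quotient homomorphism $\,\pi:\tvs\to\tvs\nnh/\nh L$, which is at once the universal Riemannian covering of the flat torus and a Lie group homomorphism. Under the canonical identification $\,T\hskip-2.8pt_x\tvs=\tvs\,$ the given parallel distribution assigns the fixed subspace $\,\tvs\hn'$ at each point. First I would verify projectability in the sense of Sect.~\ref{pr}: for $\,v\in L\,$ the translation $\,\tau\hskip-2pt_v\w$ obeys $\,\pi\circ\tau\hskip-2pt_v\w=\pi\,$ and $\,d\tau\hskip-2pt_v\w=\mathrm{id}$, so $\,d\pi\hskip-2pt_{x+v}\w(\tvs\hn')=d\pi\hskip-2pt_x\w(\tvs\hn')\,$ and $\,D\nnh_{\pi(x)}:=d\pi\hskip-2pt_x\w(\tvs\hn')\,$ is well defined; it is a smooth rank $\,\dim\tvs\hn'$ distribution because $\,\pi\,$ is a local dif\-feo\-mor\-phism, and parallel because $\,\pi\,$ carries the flat connection of $\,\tvs\,$ to that of $\,\tvs\nnh/\nh L\,$ and a parallel distribution to a parallel one. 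Since the leaf of the source distribution through $\,x\,$ is the affine subspace $\,x+\tvs\hn'\nnh$, the leaf of $\,D\,$ through $\,\pi(x)\,$ is $\,\pi(x+\tvs\hn')=\pi(x)+\Lambda$, the translate by $\,\pi(x)\,$ of the single leaf $\,\Lambda:=\pi(\tvs\hn')\,$ through $\,0$. As translations are dif\-feo\-mor\-phisms of $\,\tvs\nnh/\nh L$, all leaves of $\,D\,$ are mutually dif\-feo\-mor\-phic, which already yields the dichotomy: the leaves are all compact or all noncompact.

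Everything thus reduces to the leaf $\,\Lambda\,$ through $\,0$. The restriction $\,\pi|_{\tvs\hn'}\,$ is a local dif\-feo\-mor\-phism identifying two points of $\,\tvs\hn'$ exactly when their difference lies in $\,\tvs\hn'\cap L$; hence, in its intrinsic (leaf) smooth structure, $\,\Lambda\,$ is dif\-feo\-mor\-phic to $\,\tvs\hn'\nnh/\nh(\tvs\hn'\cap L)$, the quotient of $\,\tvs\hn'$ by the discrete sub\-group $\,\tvs\hn'\cap L$. Now $\,\tvs\hn'\cap L\,$ is a full lattice in $\,\tvs\hn'$ precisely when it spans $\,\tvs\hn'$ -- that is, precisely when $\,\tvs\hn'$ is an $\,L$-sub\-space -- and $\,\tvs\hn'\nnh/\nh(\tvs\hn'\cap L)\,$ is compact if and only if that lattice is full. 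This proves that the leaves are compact if and only if $\,\tvs\hn'$ is an $\,L$-sub\-space. In that case $\,\Lambda\,$ is a torus sent into $\,\tvs\nnh/\nh L\,$ by the injective immersion induced from $\,\pi|_{\tvs\hn'}$; since an injective immersion of a compact manifold is an embedding, $\,\Lambda\,$ is a closed connected sub\-group, i.e.\ a sub\-to\-rus, which settles the last assertion.

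The step I expect to demand the most care is the correct reading of ``compact leaf.'' Following the convention of Sect.~\ref{pr}, compactness of a leaf refers to its own (immersed) topology, which for $\,\Lambda\,$ is that of $\,\tvs\hn'\nnh/\nh(\tvs\hn'\cap L)$; one must not confuse it with the subset topology inherited from $\,\tvs\nnh/\nh L$. This distinction is exactly what is at stake when $\,\tvs\hn'$ fails to be an $\,L$-sub\-space: there $\,\tvs\hn'\cap L\,$ spans a proper sub\-space of $\,\tvs\hn'\nnh$, the leaf $\,\Lambda\,$ is a dense, non-closed winding in the torus, and it is noncompact in its intrinsic topology while being neither embedded nor compact as a subset. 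Verifying that $\,\tvs\hn'\cap L$, a discrete sub\-group of $\,\tvs\hn'$ (indeed a direct summand of $\,L\,$ by Lemma~\ref{surjc}), is a full lattice exactly when it spans $\,\tvs\hn'$ is the remaining routine ingredient.
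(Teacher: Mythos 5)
Your proof is correct, but the central step is handled by a genuinely different argument than the paper's. Both proofs dispose of projectability via deck-transformation (translation) invariance and obtain the compact/noncompact dichotomy from the fact that the leaves are translates of one another; the divergence is in the compactness criterion. The paper treats the two directions separately: when $\,\tvs\hn'$ is an $L$-subspace, it invokes Lemma~\ref{surjc} to make $\,L\cap\tvs\hn'$ a direct summand of $\,L\,$ spanning $\,\tvs\hn'\nnh$, so that the leaf through zero is a factor of a product-of-tori decomposition of $\,\tvs\nnh/\nh L\,$ (which simultaneously gives the subtorus statement); when it is not, the paper produces a nonzero linear functional on $\,\tvs\hn'$ vanishing on $\,L\cap\tvs\hn'$ and observes that it descends to an unbounded continuous function on the leaf, precluding compactness. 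You instead identify the leaf through zero, in its intrinsic topology, with the quotient group $\,\tvs\hn'\nnh/(\tvs\hn'\cap L)\,$ of $\,\tvs\hn'$ by a discrete subgroup, and read off both directions at once from the elementary fact that such a quotient is compact exactly when the subgroup is a full lattice, i.e.\ spans $\,\tvs\hn'\nnh$; the subtorus claim then follows because a compact injectively immersed subgroup is an embedded closed subgroup. Your route is more uniform and self-contained (it does not need the direct-summand refinement of Lemma~\ref{surjc}, only discreteness of $\,\tvs\hn'\cap L$), at the cost of having to justify that the maximal connected integral manifold through $\,0\,$ really is $\,\pi(\tvs\hn')\,$ with the quotient smooth structure -- a standard path-lifting/plaque argument that you correctly flag but do not write out; the paper's functional argument sidesteps that identification entirely for the noncompact case. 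Both give the same conclusion, and your explicit warning about intrinsic versus subset topology matches the convention set in Sect.~\ref{pr}.
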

\begin{proof}The pro\-ject\-a\-bil\-i\-ty assertion is obvious from the 
general fact, here applied to the projection
$\,\tvs\nh\to\tvs\nnh/\nh L$, that pro\-ject\-a\-bil\-i\-ty of
distributions under covering projections amounts to their 
deck-trans\-for\-ma\-tion invariance. The first 
claim about the leaves of $\,D$ follows as the leaves are one another's 
translation images. For the second, let $\,\mathcal{N}\hs$ be the leaf of 
$\,D\,$ through zero. Requiring $\,\tvs\hn'$ to be (or, not to be) an 
$\,L$-sub\-space makes $\,L\cap\tvs\hn'\nnh$, by Lemma~\ref{surjc}, a 
di\-rect-sum\-mand sub\-group of $\,L\,$ spanning $\,\tvs\hn'$ or, 
respectively, yields the existence of a nonzero linear functional $\,f\,$ on 
$\,\tvs\hn'\nnh$, the kernel of which contains $\,L\cap\tvs\hn'\nnh$. In the 
former case $\,\mathcal{N}\hs$ is a factor of a prod\-uct-of-to\-ri 
decomposition of %$\,\mathcal{T}\nnh$, 
$\,\tvs\nnh/\nh L$, in the latter $\,f\,$ descends to 
an unbounded function on $\,\mathcal{N}\nnh$.
\end{proof}
\begin{lemma}\label{spint}
Given a lattice\/ $\,L\,$ in\/ $\,\tvs\nnh$, the span and %of finitely many\/ 
%$\,L$-sub\-spaces, as well as the 
intersection of any family of\/ $\,L$-sub\-spaces are\/ $\,L$-sub\-spaces. The 
same is true if one replaces the phrase `$\nh L$-sub\-spaces' with\/ 
`$\nh H\nnh$-in\-var\-i\-ant $\,L$-sub\-spaces' for any fixed group\/ $\,H\hs$ 
of linear auto\-mor\-phisms of\/ $\,\tvs\hs$ sending\/ $\,L\,$ into itself.
\end{lemma}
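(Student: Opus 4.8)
The plan is to reduce everything to two elementary inputs: the equivalent form of Definition~\ref{lsbsp}, by which the $\bbR$-span of \emph{any} subset of $\,L\,$ is an $L$-subspace, together with a rank count resting on Lemma~\ref{surjc}. The span assertion is the easy half. Given a family of $L$-subspaces $\,\tvs'_i$, each equals $\spr(L\cap\tvs'_i)$, so the span of the family is the $\bbR$-span of the set $\,\bigcup_i(L\cap\tvs'_i)$, which is a subset of $\,L$; by the quoted equivalent form this span is again an $L$-subspace.

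For intersections I would first dispose of the passage from finite to arbitrary families. Because $\,\tvs\,$ is finite-dimensional, any intersection $\,\bigcap_i\tvs'_i\,$ already coincides with a finite sub-intersection: starting from one member and repeatedly adjoining a member that strictly lowers the dimension, the process halts after at most $\,\dim\tvs\,$ steps at a finite intersection that is contained in every $\,\tvs'_i$. Thus it suffices to treat two $L$-subspaces $\,\tvs'\,$ and $\,\tvs''$ and show that $\,\tvs'\cap\tvs''$ is an $L$-subspace.

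The crux is to prove that $\,G'\cap G''$ spans $\,\tvs'\cap\tvs''$, where $\,G'=L\cap\tvs'$ and $\,G''=L\cap\tvs''$. By Lemma~\ref{surjc} the group $\,G'\cap G''=L\cap(\tvs'\cap\tvs'')\,$ is a direct summand of $\,L$, so its $\bbR$-span is an $L$-subspace contained in $\,\tvs'\cap\tvs''$; the one thing still to exclude is that this span is a \emph{proper} subspace of $\,\tvs'\cap\tvs''$, i.e.\ that $\,\tvs'\cap\tvs''$ carries ``irrational'' directions unseen by $\,L$. I would rule this out by a dimension count. Since $\,\tvs'$, $\,\tvs''$ and (by the span half already proved) $\,\tvs'+\tvs''$ are all $L$-subspaces, the subgroups $\,G'$, $\,G''$, $\,G'+G''$ span them and hence have ranks $\,\dimz G'=\dim\tvs'$, $\,\dimz G''=\dim\tvs''$, $\,\dimz(G'+G'')=\dim(\tvs'+\tvs'')$. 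Tensoring the exact sequence $\,0\to G'\cap G''\to G'\oplus G''\to G'+G''\to0\,$ with $\,\bbQ\,$ (which preserves exactness, hence ranks) yields $\,\dimz(G'\cap G'')=\dim\tvs'+\dim\tvs''-\dim(\tvs'+\tvs'')=\dim(\tvs'\cap\tvs'')$. A finitely generated subgroup whose rank equals the dimension of a subspace containing it must span that subspace, so $\,G'\cap G''$ spans $\,\tvs'\cap\tvs''$, proving the latter an $L$-subspace.

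Finally, the $H$-invariant version needs no extra work: the class of $H$-invariant subspaces is visibly closed under spans and intersections, since each $\,h\in H\,$ sends every $\,\tvs'_i$ onto itself; as we have just shown the $L$-subspace property is closed under both operations as well, so is their conjunction. The main obstacle is the single step flagged above — certifying that no irrational excess survives in $\,\tvs'\cap\tvs''$. The direct-summand conclusion of Lemma~\ref{surjc} by itself does not supply this, and it is precisely the rank identity, which feeds on the already-established span case through the term $\,\dim(\tvs'+\tvs'')$, that closes the gap.
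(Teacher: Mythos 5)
Your argument is correct in substance but follows a genuinely different route from the paper's. For spans, both proofs rest on the second sentence of Definition~\ref{lsbsp} and are essentially identical. For intersections, however, the paper argues geometrically: via Lemma~\ref{cplvs} the given $L$-subspaces correspond to subtori of the torus $\,\tvs\nnh/\nh L$; the intersection of these subtori is a compact Lie subgroup, hence a finite union of cosets of a subtorus $\,\mathcal{N}$, whose tangent space at zero is the intersection of the tangent spaces of the given subtori, and Lemma~\ref{cplvs} then converts $\,\mathcal{N}\,$ back into an $L$-subspace. You instead reduce to two subspaces and run a purely algebraic rank count, tensoring $\,0\to G'\cap G''\to G'\oplus G''\to G'+G''\to0\,$ with $\,\bbQ\,$ and feeding the already-proved span case into the term $\,\dim(\tvs\hn'+\tvs\hn'')$. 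The paper's version handles arbitrary families in one stroke at the price of invoking the structure of compact abelian Lie groups; yours is more elementary and self-contained, at the cost of the finite-reduction step, which you dispose of correctly by finite-dimensionality.

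One step needs tightening. The principle you invoke at the end --- ``a finitely generated subgroup whose rank equals the dimension of a subspace containing it must span that subspace'' --- is false as a general statement: $\,\bbZ+\bbZ\sqrt2$, placed on the $x$-axis of $\,\bbR^2$ and hence contained in $\,\bbR^2$ itself, has rank $2$ but spans only a line. What saves your argument is that $\,G'\cap G''=L\cap(\tvs\hn'\cap\tvs\hn'')\,$ is a subgroup of the lattice $\,L$, hence discrete, so it is a lattice in its real span and that span has dimension $\,\dimz(G'\cap G'')$; this is exactly Lemma~\ref{lttce}(a) (equivalently, the Bourbaki fact quoted just before it), and the same remark is what actually justifies $\,\dimz(G'+G'')=\dim(\tvs\hn'+\tvs\hn'')\,$ as well. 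With discreteness made explicit the proof is complete.
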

\begin{proof}The assertion about spans follows from the case of two 
$\,L$-sub\-spaces, obvious in turn due to the second sentence of 
Definition~\ref{lsbsp}. Next, the intersection of the family of sub\-to\-ri in 
$\,\tvs\nnh/\nh L$, arising via Lemma~\ref{cplvs} from the given 
family of $\,L$-sub\-spaces, constitutes a compact Lie sub\-group of 
$\,\tvs\nnh/\nh L$, %$\,\mathcal{T}\nnh$, 
so that it is the union of 
finitely many cosets of a sub\-to\-rus $\,\mathcal{N}\nnh$. Since sub\-to\-ri 
are totally geodesic relative to the trans\-la\-tion-in\-var\-i\-ant flat
\af\ connection on %$\,\mathcal{T}\nnh$, 
$\,\tvs\nnh/\nh L$, while the projection 
$\,\tvs\nh\to\tvs\nnh/\nh L\,$ %\mathcal{T}\,$
is locally dif\-feo\-mor\-phic, the tangent space of $\,\mathcal{N}\hs$ at 
zero equals the intersection of the tangent spaces of the sub\-to\-ri forming 
the family, and each tangent space corresponds to an $\,L$-sub\-space from our 
family. The conclusion is now immediate from Lemma~\ref{cplvs}.
\end{proof}
\begin{remark}\label{cptfd}For a lattice $\,L\,$ in $\,\tvs\hs$ generated by a 
basis $\,e\nnh_1\w,\dots,e\nnh_n\w$ of $\,\tvs\nnh$, the translational action
of $\,L\,$ on $\,\tvs\hs$ has an obvious compact {\it fundamental domain\/} 
(a compact subset of $\,\tvs$ intersecting all orbits of $\,L$): the 
parallelepiped formed by all the combinations
$\,t\nnh_1\w e\nnh_1\w\nh+\ldots+t_n\w e\nnh_n\w$
with $\,t\nnh_1\w,\dots,t_n\w$ ranging over $\,[\hs0,1]$.
\end{remark}
The next lemma is immediate from the first part of Lemma~\ref{surjc} and 
the well-known fact \cite[Chap.\ VII, Th\'eor\`eme 2]{bourbaki} that lattices
in $\,\tvs\hs$ are precisely the same
as discrete sub\-groups of $\,\tvs\nnh$, spanning $\,\tvs\nnh$.
\begin{lemma}\label{lttce}
For a lattice\/ $\,L\,$ in\/ $\,\tvs\nh$, a vector sub\-space\/ 
$\,\tvs\hn'\nnh\subseteq\tvs\nnh$, and\/ $\,L\nh'\nh=L\cap\tvs\hn'\nnh$,
\begin{enumerate}
  \def\theenumi{{\rm\alph{enumi}}}
\item $L\nh'$ is a lattice in the vector sub\-space spanned by 
it, and
\item $L\nh'$ constitutes a di\-rect-sum\-mand sub\-group of\/ $\,L$.
\end{enumerate}
\end{lemma}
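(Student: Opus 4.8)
The plan is to obtain both assertions directly from the two facts cited just before the statement, with no real computation.

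Part (b) is nothing more than the first sentence of Lemma~\ref{surjc} applied to $\,G=L$: a lattice is in particular a finitely generated sub\-group of the additive group of $\,\tvs$, so its intersection $\,L'=L\cap\tvs'\,$ with the vector sub\-space $\,\tvs'\,$ is a di\-rect-sum\-mand sub\-group of $\,L$.

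For part (a), I would set $\,\tvs''=\mathrm{span}_{\bbR}(L')\,$ and invoke the Bourbaki characterization---lattices in a fi\-nite-di\-men\-sion\-al space coincide with its discrete spanning sub\-groups---this time for $\,\tvs''$ in place of $\,\tvs$. Spanning is automatic from the definition of $\,\tvs''$, so the whole matter reduces to the discreteness of $\,L'$. Since $\,L\,$ is itself a lattice in $\,\tvs$, it is discrete there (by the same characterization, now for $\,\tvs$), and $\,L'\subseteq L\,$ inherits discreteness as a sub\-group of a discrete sub\-group of the topological group $\,\tvs$; a fortiori $\,L'\,$ is discrete in the sub\-space $\,\tvs''$. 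Thus $\,L'\,$ is a discrete spanning sub\-group of $\,\tvs''$, hence a lattice in it.

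The argument is short enough that I expect no genuine obstacle; the only step deserving a word is the discreteness of $\,L'$, which is purely topological and holds because every sub\-group of a discrete sub\-group is again discrete. The degenerate case $\,L'=\{0\}$, giving $\,\tvs''=\{0\}$, is covered trivially by the same reasoning.
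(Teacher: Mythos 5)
Your proof is correct and follows exactly the route the paper intends: part (b) is the first assertion of Lemma~\ref{surjc} applied to $G=L$, and part (a) is the Bourbaki characterization of lattices as discrete spanning subgroups, applied to the span of $L'$, with discreteness of $L'$ inherited from that of $L$. The paper gives no further detail, declaring the lemma immediate from these two facts, so there is nothing to add.
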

\begin{lemma}\label{ratss}
Let\/ $\,\tws\hs$ be the rational vector sub\-space 
of a fi\-nite-di\-men\-sion\-al real vector space\/ $\,\tvs\nnh$, spanned by 
a fixed lattice\/ $\,L\,$ in\/ $\,\tvs\nnh$. Then the four sets consisting,
respectively, of all
\begin{enumerate}
  \def\theenumi{{\rm\roman{enumi}}}
\item $L$-sub\-spaces\/ $\,\tvs\hn'\nh$ of\/ $\,\tvs\nnh$,
\item di\-rect-sum\-mand sub\-groups\/ $\,L\nh'$ of\/ $\,L$,
\item rational vector sub\-spaces\/ $\,\tws\hh'\nh$ of\/ $\,\tws\nnh$,
\item sub\-to\-ri\/ $\,\mathcal{N}\hn'\nnh$ of the torus group\/ 
%$\,\mathcal{T}=
$\,\tvs\nnh/\nh L$, that is, its compact connected Lie sub\-groups,
\end{enumerate}
stand in mutually consistent, natural bijective correspondences with one 
another, obtained by declaring\/ $\,\tvs\hn'\nh$ to be the real span of both\/ 
$\,L\nh'$ and\/ $\,\tws\hh'\nh$ as well as the identity component of the 
pre\-im\-age of\/ $\,\mathcal{N}\hn'\nh$ under\/ the projection 
homo\-mor\-phism\/ $\,\tvs\nh\to\tvs\nnh/\nh L$. Furthermore, 
$\,\tws\hh'\nh$ equals\/ $\,\tws\cap\tvs\hn'\nnh$ and, simultaneously, is the 
rational span of\/ $\,L\nh'\nnh$, while\/ 
$\,\mathcal{N}\hn'\nh=\tvs\hn'\nh\nnh/\nh L\nh'\nh$ and\/ 
$\hs L\nh'\nh=L\cap\tvs\hn'\nh=L\cap\tws\hh'\nnh$. 
Finally, 
$\,\dimr\tvs\hn'\nh=\dimz L\nh'\nh=\dimq\tws\hh'\nh=\dim\mathcal{N}\hn'\nnh$.

`Mutual consistency' means here that the above finite set of bijections is 
closed under the operations of composition and inverse.
\end{lemma}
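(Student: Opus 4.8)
The plan is to treat the set (i) of $\,L$-sub\-spaces as a hub, and to exhibit for each of the other three sets a pair of maps to and from (i) which are then checked to be mutually inverse. Concretely, I would send an $\,L$-sub\-space $\,\tvs'$ to $\,L'\nh=L\cap\tvs'$ in (ii), to $\,\tws\cap\tvs'$ in (iii), and to the leaf through $\,0\,$ of the projected distribution of Lemma~\ref{cplvs} in (iv); the inverse maps pass to the real span $\,\spr L'$ (from (ii)) and $\,\spr\tws'$ (from (iii)), and to the identity component of $\,\pi^{-1}(\mathcal{N}')\,$ (from (iv)), where $\,\pi:\tvs\to\tvs\nnh/\nh L$. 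Once each of these three correspondences with (i) is shown to be a bijection, every bijection named in the statement factors through (i), so the family is automatically closed under composition and inverse; this is exactly the asserted ``mutual consistency,'' and the whole task thereby reduces to verifying the three pairs are mutually inverse and recording the displayed identities and dimension formula.

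For the correspondence between (i) and (ii): by Lemma~\ref{lttce}(b) the intersection $\,L'\nh=L\cap\tvs'$ is a di\-rect-sum\-mand sub\-group, and $\,\tvs'\nh=\spr L'$ precisely because $\,\tvs'$ is an $\,L$-sub\-space. Conversely, given a direct summand $\,L'$ with $\,L=L'\nh\oplus L''\nnh$, I would note that $\,\bbZ\nh$-ba\-ses of $\,L'$ and $\,L''$ together form a $\,\bbZ\nh$-ba\-sis of $\,L$, hence an $\,\bbR$-ba\-sis of $\,\tvs$, so $\,\spr L'$ and $\,\spr L''$ meet only at $\,0$. Then for $\,x\in L\cap\spr L'$, writing $\,x=a+b\,$ with $\,a\in L'\nnh,\,b\in L''$ forces $\,b\in L''\nh\cap\spr L'\nh=\{0\}$, whence $\,L\cap\spr L'\nh=L'\nnh$; and $\,\spr L'$ is an $\,L$-sub\-space since it is spanned by $\,L'\nh\subseteq L$. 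This makes the two maps mutually inverse.

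For (i)$\leftrightarrow$(iii) the only point is clearing denominators: every $\,w\in\tws\cap\tvs'$ satisfies $\,mw\in L\,$ for some positive integer $\,m$, hence $\,mw\in L\cap\tvs'\nh=L'\nnh$, so $\,\tws\cap\tvs'\nh=\spq L'$ and this rational space has $\,\spr(\tws\cap\tvs')=\spr L'\nh=\tvs'\nnh$; conversely a rational sub\-space $\,\tws'\nh\subseteq\tws\,$ is likewise $\,\bbQ\nh$-spanned by $\,L\cap\tws'\nnh$, making $\,\spr\tws'$ an $\,L$-sub\-space with $\,\tws\cap\spr\tws'\nh=\tws'\nnh$. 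The correspondence (i)$\leftrightarrow$(iv) is exactly Lemma~\ref{cplvs}: when $\,\tvs'$ is an $\,L$-sub\-space the leaf through $\,0\,$ is the sub\-to\-rus $\,\mathcal{N}'\nh=\tvs'\nnh/\nh L'\nnh$, and compactness of a prescribed sub\-to\-rus forces the associated sub\-space $\,\tvs'\nh=\pi^{-1}(\mathcal{N}')_0\,$ to be an $\,L$-sub\-space, the two constructions being mutually inverse because $\,\pi\,$ is locally dif\-feo\-mor\-phic. Finally $\,L'$ is a lattice in $\,\tvs'$ by Lemma~\ref{lttce}(a), so a $\,\bbZ\nh$-ba\-sis of $\,L'$ is at once an $\,\bbR$-ba\-sis of $\,\tvs'$ and a $\,\bbQ\nh$-ba\-sis of $\,\tws'\nh=\spq L'\nnh$, yielding $\,\dimr\tvs'\nh=\dimz L'\nh=\dimq\tws'\nh=\dim\mathcal{N}'$ together with $\,L'\nh=L\cap\tvs'\nh=L\cap\tws'\nnh$.

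The only steps demanding genuine (if brief) argument are the two round-trip verifications, namely that taking the real span and then intersecting with $\,L\,$ returns a direct summand unchanged, and that a rational sub\-space is cut out over $\,\bbQ\,$ by its lattice points; everything else is book\-keep\-ing on top of Lemmas~\ref{dirsm}--\ref{lttce} and Lemma~\ref{cplvs}. I expect the identity $\,L\cap\spr L'\nh=L'$ to be the main obstacle, since it is the one place where the di\-rect-sum\-mand hypothesis (rather than mere sub\-group status) is essential: without it the intersection could be strictly larger, breaking the injectivity of the (i)$\leftrightarrow$(ii) correspondence and hence the entire web.
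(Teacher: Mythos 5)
Your proposal is correct and follows essentially the same route as the paper: both treat the set of $L$-sub\-spaces as the hub, exhibit the three pairs of mutually inverse maps via real span versus intersection (and, for the torus case, Lemma~\ref{cplvs}), and settle the nontrivial round trips by the same two devices — extending a $\bbZ$-basis of the direct summand $L'$ to one of $L$ to get $L\cap\spr L'=L'$, and clearing denominators to see that rational sub\-spaces are spanned by their lattice points. The dimension identities and the "mutual consistency" bookkeeping are handled exactly as in the paper's proof.
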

\begin{proof}The mappings (ii) $\to$ (i) and (iii) $\to$ (i), as well as (iv) 
$\to$ (i), defined in the three lines following (iv), are all bijections, 
with the inverses given by$\,(L\nh'\nnh,\tws\hh'\nnh,\mathcal{N}\hn')
=(L\cap\tvs\hn'\nnh,\tws\cap\tvs\hn'\nnh,\tvs\hn'\nh\nnh/\nh L\nh')$. Namely, 
each of the three mappings and their purported inverses takes values in the 
correct set, and each of the six map\-ping-in\-verse compositions is the 
respective identity. To be specific, the claim about the values follows from 
Lemma~\ref{cplvs} for (iv) $\to$ (i) and (i) $\to$ (iv), from 
Definition~\ref{lsbsp} and Lemma~\ref{surjc} for (ii) $\to$ (i) and (i) 
$\to$ (ii), while it is obvious for (i) $\to$ (iii) and, for (iii) $\to$ (i), 
immediate from Definition~\ref{lsbsp}, since we are free to 
assume that
\begin{equation}\label{ide}
(L,\,\tws\nh,\,\tvs)\,\,=\,\,(\bbZ^n\nh,\,\bbQ^n\nh,\,\rn)\hh,\hskip12pt
\mathrm{where\ \ }\,n=\dim\tvs\nh,
\end{equation}
and every rational vector sub\-space of $\,\bbQ^n$ has a basis contained in 
$\,\bbZ^n\nnh$. Next, the compositions (ii) $\to$ (i) $\to$ (ii) and (i) $\to$ 
(ii) $\to$ (i) are the identity mappings -- the former due to the fact that 
$\,L\cap\hs\spr\,L\nh'\subseteq L\nh'$ (which one sees extending a 
$\,\bbZ\nh$-ba\-sis of $\,L\nh'$ to a $\,\bbZ\nh$-ba\-sis of $\,L$) -- 
the opposite inclusion being obvious; the latter, as Definition~\ref{lsbsp} 
gives $\,\tvs\hn'=\spr\,(L\cap\tvs\hn')$. Similarly for (iii) $\to$ (i) $\to$ 
(iii) and (i) $\to$ (iii) $\to$ (i), as long as one replaces the letters 
$\,L\,$ and $\,\bbZ\,$ with $\,\tws\hs$ and $\,\bbQ$, using (\ref{ide}) and 
the line following it. Finally, (iv) $\to$ (i) $\to$ (iv) and (i) $\to$ (iv) 
$\to$ (i) are the identity mappings as a consequence of Lemma~\ref{cplvs}, 
and the dimension equalities become obvious if one, again, chooses a 
$\,\bbZ\nh$-ba\-sis of $\,L\,$ containing a $\,\bbZ\nh$-ba\-sis of 
$\,L\nh'\nnh$.
\end{proof}
In the next theorem, as $\,H\,$ is finite, all $\,A\in H\,$ must have
$\,\mathrm{det}\hskip1.3ptA=\pm\nh1$, and so the $\,L$-pre\-serv\-ing property 
of $\,H\,$ means that $\,AL=L\,$ (rather than just $\,AL\subseteq L$).
\begin{proposition}\label{invcp}
For a lattice\/ $\,L\,$ in a fi\-nite-di\-men\-sion\-al real vector space\/ 
$\,\tvs\nnh$, a finite group\/ $\,H$ of\/ $\,L$-pre\-serv\-ing linear 
auto\-mor\-phisms of\/ $\,\tvs\nnh$, and an\/ $\,H\nnh$-in\-var\-i\-ant\/ 
$\,L$-sub\-space\/ $\,\tvs\hn'$ of\/ $\,\tvs\nnh$, there exists an\/ 
$\,H\nnh$-in\-var\-i\-ant\/ $\,L$-sub\-space\/ $\,\tvs\hn''\nnh$ of\/ 
$\,\tvs\nnh$, complementary to\/ $\,\tvs\hn'$ in the sense of\/ 
{\rm(\ref{cpl})}.
\end{proposition}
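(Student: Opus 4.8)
The plan is to transfer the problem to the rational vector space $\,\tws=\spq\,L\,$ and there invoke complete reducibility of rational representations of finite groups (Maschke's theorem), realized concretely by averaging a projection. The reason one cannot simply take an orthogonal complement is that, although averaging over $\,H\,$ yields an $\,H$-invariant inner product on $\,\tvs$ and hence an $\,H$-invariant orthogonal complement of $\,\tvs'$, that complement will in general fail to be an $\,L$-subspace -- which is exactly why the complements asserted in the statement are usually nonorthogonal.

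First I would record that each $\,A\in H$, being $\,L$-preserving, maps $\,\tws=\spq\,L\,$ onto itself, so $\,H\,$ acts by $\,\bbQ\nh$-linear automorphisms of $\,\tws$. By Lemma~\ref{ratss}, the $\,H$-invariant $\,L$-subspace $\,\tvs'$ corresponds to the rational subspace $\,\tws'=\tws\cap\tvs'\,$ of $\,\tws$, which is $\,H$-invariant since $\,\tws\,$ and $\,\tvs'\,$ both are; conversely every rational subspace of $\,\tws\,$ arises as $\,\tws\,$ intersected with its own real span, the latter being an $\,L$-subspace, and (as $\,H\,$ is $\,\bbR$-linear and preserves $\,\tws$) real-spanning intertwines $\,H$-invariance on the two sides.

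Next I would build an $\,H$-invariant $\,\bbQ\nh$-linear complement of $\,\tws'$ in $\,\tws$. Choosing any $\,\bbQ\nh$-linear projection $\,\pi_0:\tws\to\tws'$ (restricting to the identity on $\,\tws'$), I would average it over the group,
\[
\pi\,=\,\frac1{|H|}\sum_{A\in H}A\circ\pi_0\circ A^{-1},
\]
and note that $\,H$-invariance of $\,\tws'$ forces each summand, hence $\,\pi$, to send $\,\tws\,$ into $\,\tws'$ and to restrict to the identity on $\,\tws'$; thus $\,\pi\,$ is again a projection onto $\,\tws'$, it commutes with every element of $\,H$, and -- being a $\,\bbQ\nh$-linear combination of $\,\bbQ\nh$-linear maps -- it is $\,\bbQ\nh$-linear. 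Consequently $\,\tws''=\mathrm{Ker}\,\pi\,$ is an $\,H$-invariant rational subspace of $\,\tws\,$ with $\,\tws=\tws'\oplus\tws''$.

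Finally I would translate back: setting $\,\tvs''=\spr\,\tws''$, Lemma~\ref{ratss} shows $\,\tvs''$ is an $\,L$-subspace, and it is $\,H$-invariant because $\,\tws''$ is. It remains to check $\,\tvs=\tvs'\oplus\tvs''$. Real-spanning is clear from $\,\tws=\tws'+\tws''$, while the dimension equalities $\,\dimr\tvs'=\dimq\tws'$ and $\,\dimr\tvs''=\dimq\tws''$ of Lemma~\ref{ratss} give $\,\dimr\tvs'+\dimr\tvs''=\dimq\tws=\dimr\tvs$, forcing the sum to be direct. The step I expect to be the crux is precisely this passage to $\,\bbQ$: everything works because $\,\mathrm{char}\,\bbQ=0\,$ makes the coefficient $\,1/|H|\,$ legitimate and keeps the averaged projection rational, so that its kernel is genuinely an $\,L$-subspace and not merely a real $\,H$-invariant complement.
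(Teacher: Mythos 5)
Your proposal is correct and follows essentially the same route as the paper: pass to the rational span $\,\tws\,$ of $\,L$, average a $\,\bbQ$-linear projection $\,\tws\to\tws\cap\tvs\hn'$ over the conjugation action of $\,H$, and pull the kernel back to an $\,H$-invariant $\,L$-subspace via Lemma~\ref{ratss}. The only difference is that you spell out the verifications (that the average is again a projection onto $\,\tws\hh'$ and the dimension count making the real sum direct) which the paper leaves implicit.
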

\begin{proof}Let $\,\tws\hh'=\hs\tws\cap\tvs\hn'\nnh$, where $\,\tws\hs$ is
the rational span of $\,L\,$ (see Lemma~\ref{ratss}). Restricted to 
$\,\tws\nnh$, the elements of $\,H\,$ act by conjugation on the rational 
\af\ space $\,\mathcal{P}\hs$ of all $\,\bbQ$-lin\-e\-ar projections 
$\,\tws\to\tws\hh'$ (by which we mean linear operators $\,\tws\to\tws\hh'$ 
equal to the identity on $\,\tws\hh'$). The average of any orbit of the action 
of $\,H\,$ on $\,\mathcal{P}\hs$ is an $\,H\nh$-in\-var\-i\-ant 
projection $\,\tws\to\tws\hh'$ with a kernel $\,\tws\hh''$ corresponding via 
Lemma~\ref{ratss} to our required $\,\tvs\hn''\nnh$.
\end{proof}
\begin{corollary}\label{dcomp}
If\/ $\,L,\tvs\nnh,H\,$ satisfy the hypotheses of
Proposition\/~{\rm\ref{invcp}}, then 
every nonzero\/ $\,H\nnh$-in\-var\-i\-ant\/ $\,L$-sub\-space\/ 
$\,\tvs\hn'_{\hskip-2.6pt0}$ of\/ $\,\tvs$ can be decomposed into a direct sum 
of one or more nonzero\/ $\,H\nnh$-in\-var\-i\-ant\/ $\,L$-sub\-spaces, each 
of which is minimal in the sense of not containing any further nonzero 
proper\/ $\,H\nnh$-in\-var\-i\-ant\/ $\,L$-sub\-space.
\end{corollary}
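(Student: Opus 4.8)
The plan is to run the standard complete-reducibility (Maschke-type) argument, by induction on the real dimension $\,\dimr\tvs'_0$, using Proposition~\ref{invcp} as the source of invariant complements. First I record that being a \emph{minimal} $\,H$-invariant $\,L$-subspace is an intrinsic property: a subspace $\,\tvs'_1\subseteq\tvs'_0$ qualifies as an $\,L$-subspace exactly when it is spanned by $\,L\cap\tvs'_1$, and this condition, like $\,H$-invariance, is unaffected by the choice of ambient space. Consequently a subspace is a minimal $\,H$-invariant $\,L$-subspace when tested inside $\,\tvs'_0$ if and only if it is one inside $\,\tvs$, so the minimal pieces produced at each stage of the recursion are automatically minimal in the sense demanded by the statement.

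For the induction itself, if $\,\tvs'_0$ is already minimal it is the required direct sum of a single summand. Otherwise, by the very definition of non-minimality, $\,\tvs'_0$ contains a nonzero proper $\,H$-invariant $\,L$-subspace $\,\tvs'_1$. I apply Proposition~\ref{invcp} to $\,\tvs'_1$, obtaining an $\,H$-invariant $\,L$-subspace $\,\tvs''$ of $\,\tvs$ with $\,\tvs=\tvs'_1\oplus\tvs''$, and set $\,\tvs'_2=\tvs'_0\cap\tvs''$. Since $\,\tvs'_1\subseteq\tvs'_0$, the modular law gives $\,\tvs'_0=\tvs'_1\oplus\tvs'_2$, while Lemma~\ref{spint} guarantees that the intersection $\,\tvs'_2$ is again an $\,H$-invariant $\,L$-subspace. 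As $\,\tvs'_1$ is nonzero and proper in $\,\tvs'_0$, both $\,\tvs'_1$ and $\,\tvs'_2$ have dimension strictly below $\,\dimr\tvs'_0$; the inductive hypothesis then decomposes each of them into a direct sum of minimal $\,H$-invariant $\,L$-subspaces, and concatenating the two decompositions exhibits $\,\tvs'_0$ as such a direct sum.

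I expect the only delicate point to be the passage from the complement $\,\tvs''$ furnished by Proposition~\ref{invcp}, which a priori lives in all of $\,\tvs$, to a complement of $\,\tvs'_1$ lying inside $\,\tvs'_0$. This is precisely where the two cited tools combine: the modular identity $\,\tvs'_0\cap(\tvs'_1+\tvs'')=\tvs'_1+(\tvs'_0\cap\tvs'')$ yields both directness and spanning within $\,\tvs'_0$, and Lemma~\ref{spint} keeps $\,\tvs'_0\cap\tvs''$ inside the class of $\,H$-invariant $\,L$-subspaces. Should one prefer to sidestep the modular law, an equally short route is to apply Proposition~\ref{invcp} to the restricted data given by the lattice $\,L\cap\tvs'_0$ in $\,\tvs'_0$ (a lattice there by Lemma~\ref{lttce}(a)) together with the finite group obtained by restricting $\,H$ to the invariant subspace $\,\tvs'_0$; the remaining verifications are entirely routine.
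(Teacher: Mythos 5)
Your proposal is correct and follows essentially the same route as the paper's own proof: induction on $\,\dimr\tvs\hn'_{\hskip-2.6pt0}$, a nonzero proper $\,H\nh$-in\-var\-i\-ant $\,L$-sub\-space inside a non-min\-i\-mal $\,\tvs\hn'_{\hskip-2.6pt0}$, an $\,H\nh$-in\-var\-i\-ant complement from Proposition~\ref{invcp} intersected with $\,\tvs\hn'_{\hskip-2.6pt0}$ (the paper phrases your modular-law step as ``$\tvs\hn''$ intersects every coset of $\,\tvs\hn'$, including cosets within $\,\tvs\hn'_{\hskip-2.6pt0}$''), and Lemma~\ref{spint} to keep the intersection in the right class. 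Your preliminary remark that minimality is intrinsic, and the alternative via restricting the data to $\,\tvs\hn'_{\hskip-2.6pt0}$, are correct but not needed beyond what the paper already does.
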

\begin{proof}Induction on the possible values of 
$\,\dim\tvs\hn'_{\hskip-2.6pt0}$. The case $\,\dim\tvs\hn'_{\hskip-2.6pt0}=1\,$
is trivial. Assuming the claim true for sub\-spaces of 
dimensions less than $\,\dim\tvs\hn'_{\hskip-2.6pt0}$, along with 
non-min\-i\-mal\-ity of $\,\tvs\hn'_{\hskip-2.6pt0}$, we fix a nonzero proper 
$\,H\nh$-in\-var\-i\-ant $\,L$-sub\-space $\,\tvs\hn'$ of $\,\tvs\nnh$, 
contained in $\,\tvs\hn'_{\hskip-2.6pt0}$, and choose an
$\,H\nh$-in\-var\-i\-ant complement $\,\tvs\hn''$ 
of $\,\tvs\hn'\nnh$, guaranteed to exist by Proposition~\ref{invcp}. Since 
$\,\tvs\hn''$ intersects every coset of $\,\tvs\hn'$ in $\,\tvs\nnh$, 
including cosets within $\,\tvs\hn'_{\hskip-2.6pt0}$, the sub\-space 
$\,\tvs\hn'_{\hskip-2.6pt0}\cap\tvs\hn''$ is an $\,H\nh$-in\-var\-i\-ant 
complement of $\,\tvs\hn'$ in $\,\tvs\hn'_{\hskip-2.6pt0}$, as well as an 
$\,L$-sub\-space (due to Lemma~\ref{spint}). We may now apply the inductive 
assumption to both $\,\tvs\hn'$ and 
$\,\tvs\hn'_{\hskip-2.6pt0}\cap\tvs\hn''\nnh$.
\end{proof}
For geometric consequences of Corollary~\ref{dcomp}, see the end of 
Sect.~\ref{gc}, where we also point out that a decomposition into minimal
summands is in general nonunique.

Given a lattice $\,L\,$ in a 
fi\-nite-di\-men\-sion\-al real vector space $\,\tvs\hs$ and an 
$\,L$-sub\-space $\,\tvs\hn'$ of $\,\tvs\nnh$, the restriction to $\,L\,$ of 
the quo\-tient-space projection $\,\tvs\nnh\to\tvs\nh/\tvs\hn'$ has the 
kernel $\,L\nh'\nh=L\cap\tvs\hn'\nnh$, and so it descends to an injective 
group homo\-mor\-phism $\,L/\nh L\nh'\nnh\to\tvs\nh/\tvs\hn'\nnh$, the image 
of which is a (full) lattice in an $\,\tvs\nh/\tvs\hn'$ (which follows if 
one uses a $\,\bbZ\nh$-ba\-sis of $\,L\,$ containing a $\,\bbZ\nh$-ba\-sis of 
$\,L\nh'$). From now on we will treat $\,L/\nh L\nh'$ as a subset of 
$\,\tvs\nh/\tvs\hn'\nnh$. The discreteness of the lattice 
$\,L/\nh L\nh'\nnh\subseteq\tvs\nh/\tvs\hn'$ clearly implies the existence 
of an open subset $\,\mathcal{U}'$ of $\,\tvs\nnh$, containing 
$\,\tvs\hn'$ and forming a union of cosets of $\,\tvs\hn'\nnh$, such that 
$\,L\hn\cap\nh\mathcal{U}'\nh=\hs L\nh'\nnh$.

\section{Af\-fine spaces}\label{as}
In this section all the \af\ and vector spaces are real and
fi\-nite-di\-men\-sion\-al, 
we denote by $\,\mathrm{End}\,\tvs\hs$ the space of linear en\-do\-mor\-phisms 
of a given real vector space $\,\tvs\nnh$, and scalars stand for the
corresponding multiples of identity, so that the identity itself becomes 
$\,1\in\mathrm{End}\,\tvs\nnh$.

For an \af\ space $\,\mathcal{E}\hs$ with the translation vector space
$\,\tvs\nnh$, let $\,\mathrm{Aut}\,\mathcal{E}\hs$ be the group of all
\af\ transformations (auto\-mor\-phisms) of $\,\mathcal{E}$, and 
$\,\operatorname{Aff}\hs\mathcal{E}\hs$ the set of all (possibly
non\-bi\-ject\-ive) \af\ mappings $\,\mathcal{E}\to\mathcal{E}$. We
have the inclusions
$\,\mathrm{Aut}\,\mathcal{E}\subseteq\operatorname{Aff}\hs\mathcal{E}\hs$ and 
$\,\mathcal{V}\hh\subseteq\mathrm{Aut}\,\mathcal{E}$, the latter
expressing the fact that $\,\mathrm{Aut}\,\mathcal{E}\hs$ contains the
normal sub\-group consisting of all translations. Any vector sub\-space
$\,\tvs\hn'$ of $\,\tvs\hs$ gives rise to a foliation of $\,\mathcal{E}$,
with the leaves formed by \af\ sub\-spaces $\,\mathcal{E}\nh'$ parallel to
$\,\tvs\hn'\nnh$, meaning% that they are
\begin{equation}\label{orb}
\mathrm{the\ orbits\ of\ the\ translational\ action\ of\ 
}\,\tvs\hn'\mathrm{\ on\ }\,\mathcal{E}\hh,
\end{equation}
which will also be referred to as the {\it cosets\/} of 
$\,\tvs\hn'$ in $\,\mathcal{E}$. The resulting leaf (quotient) space 
$\,\mathcal{E}\hn/\hn\tvs\hn'$ constitutes an \af\ space having the 
translation vector space $\,\tvs\nnh/\hn\tvs\hn'\nnh$. Clearly, for cosets
$\,\mathcal{E}\nh'$ and $\,\mathcal{E}\nh''$ of vector sub\-spaces 
$\,\tvs\hn'\nnh,\tvs\hn''$ in a vector space $\,\tvs\nh$,
\begin{equation}\label{opt}
\mathrm{the\ complementarity\ condition\ (\ref{cpl})\ implies\ that\
}\,\mathcal{E}\nh'\nnh\cap\hs\mathcal{E}\nh''\nh\mathrm{\ is\ a\ 
one}\hyp\mathrm{point\ set.}
\end{equation}
A fixed inner product in $\,\hs\tvs\hs$ turns $\,\mathcal{E}\hs$ into a 
{\it Euclidean \af\ space}, with the isom\-e\-try group 
$\,\mathrm{Iso}\,\mathcal{E}\subseteq\mathrm{Aut}\,\mathcal{E}$. If 
$\,\rd\in(0,\infty)$, we define the $\,\rd${\it-neigh\-bor\-hood\/} of 
an \af\ sub\-space $\,\mathcal{E}\nh'$ of $\,\mathcal{E}\hs$ to be the set of 
points in $\,\mathcal{E}\hs$ lying at distances less that $\,\rd\,$ from 
$\,\mathcal{E}\nh'\nnh$. Clearly, the $\,\rd$-neigh\-bor\-hood of 
$\,\mathcal{E}\nh'$ is a union of cosets of a vector sub\-space $\,\tvs\hn'$ of 
$\,\tvs\hs$ (one of them being $\,\mathcal{E}\nh'$ itself), as well as the 
preimage, under the projection 
$\,\mathcal{E}\to\mathcal{E}\hn/\hn\tvs\hn'\nnh$, of the radius $\,\rd\,$ 
open ball centered at the point $\,\mathcal{E}\nh'$ in the quotient Euclidean 
\af\ space $\,\mathcal{E}\hn/\hn\tvs\hn'$ (for the obvious inner product 
on $\,\tvs\nnh/\hn\tvs\hn'$).

Given a Euclidean \af\ space 
$\,\mathcal{E}\hs$ with the translation vector space $\,\tvs\hs$ and an
af\-fine sub\-space $\,\mathcal{E}\nh'\subseteq\mathcal{E}\hs$ parallel, as in
(\ref{orb}), to a 
vector sub\-space $\,\tvs\hn'\subseteq\tvs\nh$, (\af)
self-isom\-e\-tries $\,\zeta\,$ of $\,\mathcal{E}$ such that
$\,\zeta(x)=x\,$ for all $\,x\in\mathcal{E}\nh'$ are in an obvious one-to-one
correspondence with linear self-isom\-e\-tries $\,A\,$ of the orthogonal
complement of $\,\tvs\hn'\nnh$. In this case, for easy later reference (in the
proof of Lemma~\ref{cmmut}),
\begin{equation}\label{afx}
\mathrm{we\ will\ call\ }\,\zeta\,\mathrm{\ the\ \af\ extension\ of\
}\,A\,\mathrm{\ centered\ on\ }\,\mathcal{E}\nh'\nh.
\end{equation}
\begin{definition}\label{trprt}In an \af\ space $\,\mathcal{E}$ having 
the translation vector space $\,\tvs\nnh$, given an \af\ mapping
$\,\gamma\in\operatorname{Aff}\hs\mathcal{E}\hs$ with the linear part 
$\,A\in\mathrm{End}\,\tvs\nh$, we define the {\it
trans\-la\-tion\-al-part coset\/} of $\,\gamma\,$ to be the sub\-set
$\,b\hs+\nh\hat{\tvs}\hs$ of $\,\tvs\nnh$, where $\,\hat{\tvs}$ denotes the
image of $\,A-1$, and $\,b\in\mathcal{V}\hs$ is the translational part of
$\,\gamma\,$ relative to a fixed origin $\,o\in\mathcal{E}$, in the sense that 
$\,\gamma(o+v)=o+Av\nh+\hn b\,$ for all $\,v\in\tvs\nnh$. The coset 
$\,b\hs+\nh\hat{\tvs}\hs$ is clearly independent of the choice of an origin
$\,o$, as a new origin $\,o+w\,$ results in the replacement of $\,b$ with 
$\,b+(A-1)w$.
\end{definition}
For an \af\ transformation 
$\,\gamma\in\mathrm{Aut}\,\mathcal{E}$ of an \af\ space 
$\hs\mathcal{E}$ with the translation vector space $\tvs\nnh$, and a
vector sub\-space $\,\tvs\hn'$ of $\,\tvs\nh$, consider this condition:
\begin{equation}\label{cnd}
\begin{array}{l}
\mathrm{the\ linear\ part\ }\,\,A\,\,\mathrm{\ of\ 
}\,\hs\gamma\hs\,\mathrm{\ leaves\ }\,\,\tvs\hn'\hs\mathrm{\ invariant\ and\
descends\ to}\\
\mathrm{the\ identity\ transformation\ of\
}\hs\tvs\nnh/\hn\tvs\hn'\nnh\mathrm{,\nnh\ that\ is,\
}(\hn A\nh-\nnh1\hn)(\hn\tvs)\nnh\subseteq\nnh\tvs\hn'\nnh.
\end{array}
\end{equation}
\begin{lemma}\label{fltmf}
If, in Lemma\/~{\rm\ref{nrexp}}, $\,\mathcal{M}\hn'$ is a compact 
leaf of a parallel distribution\/ $\,D\,$ on a complete flat Riemannian
manifold\/ $\,\mathcal{M}$, we get the following additional
conclusions.
\begin{enumerate}
  \def\theenumi{{\rm\alph{enumi}}}
\item Every level of\/ $\,\mathrm{dist}\hh(\mathcal{M}\hn'\nh,\,\cdot\,)\,$ 
in\/ $\,\mathcal{M}'\hskip-4.1pt_\rd\w$, and\/ $\,\mathcal{M}'\hskip-4.1pt_\rd\w$ itself, is a 
union of leaves of\/ $\,D$.
\item Restrictions of\/ 
$\,\mathcal{M}'\hskip-4.1pt_\rd\w\ni x\mapsto y\in\mathcal{M}\hn'$ 
to leaves of\/ $\,D\,$ in\/ $\,\mathcal{M}'\hskip-4.1pt_\rd\w$ are locally isometric.
\item The local inverses of all the above lo\-cal\-ly-i\-so\-met\-ric 
restrictions correspond via the dif\-feo\-mor\-phism\/ $\,\mathrm{Exp}^\perp$ 
to all local sections of the normal bundle of\/ $\,\mathcal{M}\hn'$ obtained 
by restricting to\/ $\,\mathcal{M}\hn'$ local parallel vector fields of 
lengths\/ $\,r\in[\hs0,\rd)\,$ that are tangent to\/ $\,\mathcal{M}\,$ and 
normal to\/ $\,\mathcal{M}\hn'\nnh$, with\/ $\,\hs r\hskip.55pt$ equal to the 
value of\/ $\,\mathrm{dist}\hh(\mathcal{M}\hn'\nh,\,\cdot\,)\,$ on the leaf in
question.
\end{enumerate}
\end{lemma}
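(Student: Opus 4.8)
The plan is to reduce all three assertions to an elementary computation in flat Euclidean space, by passing to the universal Riemannian cover. Since $\,\mathcal{M}\,$ is complete and flat, its universal cover is $\,\rn\,$ with the flat metric and the covering projection $\,\psi:\rn\to\mathcal{M}\,$ is a local isometry. The $\,\psi$-pullback $\,\tilde D\,$ of $\,D\,$ is again parallel, and---parallel transport on $\,\rn\,$ being trivial---$\,\tilde D\,$ is the constant distribution tangent to a fixed linear sub\-space $\,\mathcal{V}'\subseteq\rn\nnh$, with $\,\tilde D^\perp$ tangent to $\,\mathcal{V}''\nnh=(\mathcal{V}')^\perp\nnh$; here I use the standard facts that the orthogonal complement of a parallel distribution is parallel and that a parallel distribution is integrable with totally geodesic leaves. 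A connected component $\,\tilde{\mathcal{M}}'$ of $\,\psi^{-1}(\mathcal{M}')\,$ is then a full affine sub\-space $\,q_0\nh+\mathcal{V}'$ parallel to $\,\mathcal{V}'$ (by completeness and total geodesy), and $\,\psi\,$ carries $\,\tilde D$, the normal geodesics, $\,\mathrm{Exp}^\perp\nnh$, the function $\,\mathrm{dist}\hh(\tilde{\mathcal{M}}'\nh,\,\cdot\,)\,$ and the nearest-point retraction of $\,\tilde{\mathcal{M}}'$ onto their counterparts from Lemma~\ref{nrexp}. As (b) and (c) are local statements and (a) concerns sets saturated by $\,D$, it suffices to work in a neighborhood of an arbitrary point of $\,\tilde{\mathcal{M}}'\nnh$, where $\,\psi\,$ is an isometry onto an open subset of $\,\mathcal{M}$; after shrinking $\,\rd\,$ as permitted by Lemma~\ref{nrexp}, the relevant downstairs data are literally the images of the flat model.

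In the flat model, writing each point of $\,\rn\,$ as $\,q_0\nh+u+\xi\,$ with $\,u\in\mathcal{V}'$ and $\,\xi\in\mathcal{V}''\nnh$, one has $\,\mathrm{dist}\hh(\tilde{\mathcal{M}}'\nh,\,q_0\nh+u+\xi)=|\xi|$, nearest point $\,q_0\nh+u$, normal geodesics $\,t\mapsto q_0\nh+u+t\xi$, and $\,\mathrm{Exp}^\perp_{q_0+u}(\eta)=q_0\nh+u+\eta\,$ for $\,\eta\in\mathcal{V}''\nnh$; the leaves of $\,\tilde D\,$ meeting the radius-$\rd$ tube $\,\mathcal{M}'_\rd\,$ are the affine planes $\,\mathcal{L}_{\xi_0}=q_0\nh+\mathcal{V}'+\xi_0\,$ with $\,|\xi_0|<\rd$. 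Assertion (a) is then immediate, since the level $\,\{\mathrm{dist}=r\}\,$ equals $\,\bigcup_{|\xi_0|=r}\mathcal{L}_{\xi_0}\,$ and the tube equals $\,\bigcup_{|\xi_0|<\rd}\mathcal{L}_{\xi_0}$. For (b), the nearest-point retraction restricted to $\,\mathcal{L}_{\xi_0}\,$ sends $\,q_0\nh+u+\xi_0\mapsto q_0\nh+u$, i.e.\ is translation by $\,-\xi_0$, hence an isometry onto $\,\tilde{\mathcal{M}}'\nnh$. For (c), its inverse $\,q_0\nh+u\mapsto q_0\nh+u+\xi_0\,$ is $\,\mathrm{Exp}^\perp$ applied to the constant normal field $\,\eta\equiv\xi_0$, which is parallel of length $\,|\xi_0|=r$; conversely every parallel normal field along the affine plane $\,\tilde{\mathcal{M}}'$ is constant, so these sections exhaust the leaves, and the value of $\,\mathrm{dist}\hh(\tilde{\mathcal{M}}'\nh,\,\cdot\,)\,$ on $\,\mathcal{L}_{\xi_0}\,$ is $\,r=|\xi_0|$.

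Pushing the model down by the local isometry $\,\psi\,$ now yields (a)--(c), the qualifiers \emph{local} and \emph{locally isometric} appearing precisely because the normal bundle $\,D^\perp|_{\mathcal{M}'}$---flat, as $\,\mathcal{M}'$ is totally geodesic in the flat manifold $\,\mathcal{M}$---may carry nontrivial holonomy, so that the constant (hence global) parallel normal fields upstairs descend only to \emph{local} parallel normal fields and the retraction descends only to a \emph{locally} isometric map. This holonomy of the flat normal connection, equivalently the deck-group action on $\,\tilde{\mathcal{M}}'\nnh$, is the only genuine subtlety; the remaining verification---that $\,\psi\,$ intertwines the tubular-neighborhood data of Lemma~\ref{nrexp} with those of the flat model---is routine. (For (a) one may alternatively argue intrinsically: $\,\nabla\hh\mathrm{dist}\hh(\mathcal{M}'\nh,\,\cdot\,)\,$ is the unit field tangent to the normal geodesics, which stays in the parallel distribution $\,D^\perp$ because it does so initially, whence $\,\mathrm{dist}\hh(\mathcal{M}'\nh,\,\cdot\,)\,$ is constant along the leaves of $\,D$.)
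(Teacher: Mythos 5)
Your proof is correct and follows essentially the same route as the paper, which disposes of the lemma in one sentence by observing that the pullback of $\,D\,$ to the Euclidean \af\ universal cover is the distribution whose integral manifolds are the \af\ sub\-spaces parallel to a fixed $\,\tvs\hn'\nnh$, so that all three assertions reduce to the elementary computation in the flat model. Your write-up merely makes explicit the details the paper calls trivial, including the (correct) observation that the normal holonomy is the reason the statements in (b) and (c) are only local.
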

This trivially follows from the fact the  pull\-back of $\,D\,$ to the 
Euclidean \af\ space $\,\mathcal{E}$ constituting the Riemannian universal 
covering space of $\,\mathcal{M}\,$ is a distribution
%with the leaves provided by
whose integral manifolds are the \af\ sub\-spaces parallel to 
$\,\tvs\hn'\nnh$, in the sense of (\ref{orb}), for some vector 
sub\-space $\,\tvs\hn'$ of the translation vector space $\,\tvs\hs$ of 
$\,\mathcal{E}$.

\section{Bieberbach groups and flat manifolds}\label{bg}
Let $\,\mathcal{E}\hs$ be a Euclidean \af\ $\,n$-space 
with the translation vector space $\,\tvs\nnh$. By a {\it Bie\-ber\-bach 
group\/} \cite[p.\ 4, Definition 1.7]{charlap} in $\,\mathcal{E}\hs$ one means 
any tor\-sion-free discrete 
sub\-group $\,\bg\,$ of $\,\mathrm{Iso}\,\mathcal{E}\hs$ for which there 
exists a compact fundamental domain (Remark~\ref{cptfd}).
Using the lin\-e\-ar-part homo\-mor\-phism
$\,\lp:\mathrm{Aut}\,\mathcal{E}\to\mathrm{Aut}\,\hn\tvs 
\cong\mathrm{GL}\hs(n,\bbR)$, one
defines the {\it 
lattice sub\-group\/} $\,L$ of $\,\bg\,$ and its {\it holonomy group\/} 
$\,H\subseteq\mathrm{Iso}\hs\tvs\cong\mathrm{O}\hs(n)\,$ by
\begin{equation}\label{lah}
L\,=\,\bg\hn\cap\tvs,\hskip22ptH\,=\,\lp(\bg)\hh.
\end{equation}
Thus, $\,L\,$ is the set of all translations lying in $\,\bg\,$ (which also 
makes it the kernel of the restriction $\,\lp:\bg\to H$), and $\,H\,$ 
consists of the linear parts of elements of $\,\bg$. Note that 
$\,L\subseteq\tvs\hs$ is a (full) lattice in the usual sense
\cite[p.\ 17, Theorem 3.1(ii)]{charlap},
as defined in Sect.~\ref{lv}. The relations involving 
$\,\bg,L$ and $\,H$ are conveniently summarized by the short exact sequence
\begin{equation}\label{exa}
L\,\to\,\hs\bg\,\to\,H,\hskip8pt\mathrm{where\ the\ arrows\ are\ the\ 
inclusion\ homo\-mor\-phism\ and\ }\,\lp\hh.
\end{equation}
As the normal sub\-group $\,L\,$ of $\,\bg\,$ is 
A\-bel\-i\-an, the action of $\,\bg\,$ on $\,L$ by conjugation descends to 
an action on $\,L\,$ of the quotient group $\,\bg/L$, identified via 
(\ref{exa}) with $\,H$.
\begin{equation}\label{cnj}
\begin{array}{l}
\mathrm{This\ last\ action\ clearly\ coincides\ with\ the\ ordinary\ linear\
action\ of}\\
H\,\mathrm{\ on\ }\hs\tvs\nnh\mathrm{,\ restricted\ to\ the\ lattice\
}\hs\,L\hs\subseteq\tvs\nnh\mathrm{,\ and\ so\ }\,L\hs\mathrm{\ is\
}\,H\nh\hyp\mathrm{in\-var\-i\-ant.}
\end{array}
\end{equation}
\begin{remark}\label{frpdc}The action of a Bie\-ber\-bach group $\,\bg\,$ on 
the Euclidean \af\ space $\,\mathcal{E}$ being always free and properly 
discontinuous \cite[p.\ 3, Proposition 1.1]{charlap}, the quotient
$\,\mathcal{M}=\mathcal{E}/\hh\bg\hh$, with the 
projected metric, forms a compact flat Riemannian manifold, while $\,H\,$ must 
be finite \cite[p.\ 17, Theorem 3.1(i)]{charlap}.
\end{remark}
\begin{remark}\label{bijct}The assignment of 
$\,\mathcal{M}=\mathcal{E}/\hh\bg\,$ to $\,\bg\,$ establishes a well-known 
bijective correspondence \cite[p.\ 65, Theorem 5.4]{charlap} between
equivalence classes of 
Bie\-ber\-bach groups and isometry types of compact flat Riemannian manifolds. 
Bie\-ber\-bach groups $\,\bg\,$ and $\,\hat\bg\,$ in Euclidean \af\ 
spaces $\,\mathcal{E}\hs$ and $\,\hat{\mathcal{E}}\hs$ are called {\it 
equivalent\/} here if some \af\ isometry 
$\,\mathcal{E}\to\hat{\mathcal{E}}\hs$ conjugates $\,\bg$ onto $\,\hat\bg$. 
Furthermore, $\,\bg\,$ and $\,H\,$ in (\ref{exa}) serve as the fundamental 
and holonomy groups of $\,\mathcal{M}$, while $\,\bg\,$ also acts via deck 
tranformations on the Riemannian universal covering space of $\,\mathcal{M}$, 
isometrically identified with $\,\mathcal{E}$.
\end{remark}
Since the lattice sub\-group $\,L\,$ of the fundamental group $\,\bg\,$ of
$\,\mathcal{M}\,$ gives rise to a covering projection 
$\,\mathcal{E}/\nh L\to\mathcal{M}$, Lemmas~\ref{cplvs} and~\ref{spint}
combined with Remark~\ref{bijct} have the following obvious consequence.
\begin{corollary}\label{clsed}
In any compact flat Riemannian manifold, the class of parallel distributions
with compact leaves is closed under spans and intersections.
\end{corollary}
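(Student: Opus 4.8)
The plan is to convert this geometric statement into the purely lattice-theoretic closure assertion of Lemma~\ref{spint}, exploiting the tower of coverings $\,\mathcal{E}\to\mathcal{E}/\nh L\to\mathcal{M}\,$ supplied by Remark~\ref{bijct}. Writing $\,\mathcal{M}=\mathcal{E}/\bg\,$ with translation space $\,\mathcal{V}$, lattice $\,L=\bg\cap\mathcal{V}\,$ and holonomy group $\,H=\lp(\bg)$, the intermediate space $\,\mathcal{E}/\nh L\,$ is (after fixing an origin) the torus $\,\mathcal{V}/\nh L$, and $\,\mathcal{E}/\nh L\to\mathcal{M}\,$ is a finite covering whose deck group is the finite group $\,H\,$ acting freely (Remark~\ref{covpr}(a)).

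First I would set up a bijection between parallel distributions with compact leaves on $\,\mathcal{M}\,$ and $\,H$-in\-var\-i\-ant $\,L$-sub\-spaces of $\,\mathcal{V}$. A parallel distribution $\,D\,$ on $\,\mathcal{M}\,$ pulls back to $\,\mathcal{E}\,$ as the distribution tangent to a single fixed vector sub\-space $\,\mathcal{V}'\subseteq\mathcal{V}\,$ (the discussion following Lemma~\ref{fltmf}); its descent to $\,\mathcal{M}\,$ forces $\,\mathcal{V}'$ to be invariant under the deck group $\,\bg$, hence---since translations act trivially on sub\-space directions---under the linear parts of $\,\bg$, i.e.\ $\,H$-in\-var\-i\-ant. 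Projecting instead to the torus $\,\mathcal{E}/\nh L\,$ produces the distribution $\,\hat D\,$ of Lemma~\ref{cplvs}, which is $\,H$-deck-in\-var\-i\-ant (this being exactly what makes it pro\-ject\-a\-ble onto $\,D$) and projects onto $\,D\,$ under the finite covering.

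Next I would pin down the compactness criterion. By Lemma~\ref{cplvs} the leaves of $\,\hat D\,$ on $\,\mathcal{E}/\nh L\,$ are compact precisely when $\,\mathcal{V}'$ is an $\,L$-sub\-space. Since $\,\hat D\,$ projects onto $\,D$, Lemma~\ref{cptlv}(iv) shows compactness of the leaves of $\,\hat D\,$ implies that of the leaves of $\,D$; the reverse implication holds because the covering $\,\mathcal{E}/\nh L\to\mathcal{M}\,$ is finite, so each leaf upstairs covers its image leaf downstairs with finite degree (Lemma~\ref{cptlv}(iii)) and is therefore compact if and only if that image is. Hence the leaves of $\,D\,$ are compact exactly when $\,\mathcal{V}'$ is an $\,H$-in\-var\-i\-ant $\,L$-sub\-space, giving the desired bijection $\,D\mapsto\mathcal{V}'$.

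Finally I would transport the closure operations. On the universal cover the span and intersection of distributions $\,D_1,D_2,\dots\,$ are tangent, respectively, to the span and intersection of the corresponding sub\-spaces $\,\mathcal{V}'_1,\mathcal{V}'_2,\dots$; as these are fixed sub\-spaces, the intersection automatically has constant rank and is a genuine parallel sub\-bundle, and since the covering projections are fiber\-wise isomorphisms on tangent spaces, span and intersection of the distributions on $\,\mathcal{M}\,$ correspond literally to span and intersection of the sub\-spaces under the above bijection. Lemma~\ref{spint} asserts that the class of $\,H$-in\-var\-i\-ant $\,L$-sub\-spaces is closed under span and intersection (finite families or not), so the span and intersection distributions again correspond to $\,H$-in\-var\-i\-ant $\,L$-sub\-spaces and hence again have compact leaves, which is the corollary. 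The only point needing genuine care---the nearest thing to an obstacle in an otherwise formal argument---is the two-way transfer of leaf-compactness across $\,\mathcal{E}/\nh L\to\mathcal{M}$, where finiteness of $\,H\,$ is exactly what guarantees that a leaf is compact upstairs precisely when its image is compact downstairs.
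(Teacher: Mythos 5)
Your proposal is correct and follows essentially the same route as the paper, which proves the corollary in one sentence by combining the covering $\,\mathcal{E}/\nh L\to\mathcal{M}$, Lemma~\ref{cplvs} and Lemma~\ref{spint} (via Remark~\ref{bijct}); you have merely spelled out the details the paper calls ``obvious.'' Your identification of the two-way transfer of leaf-compactness across the finite covering as the one point needing care is apt, and your handling of it is sound.
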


\section{Lat\-tice-re\-duc\-i\-bil\-i\-ty}\label{lr}
A Bie\-ber\-bach group $\,\bg\,$ in a Euclidean \af\ space 
$\,\mathcal{E}\hs$ (or, the compact flat Riemannian manifold 
$\,\mathcal{M}=\mathcal{E}/\hh\bg\,$ corresponding to $\,\bg$, in the sense of 
Remark~\ref{bijct}, will be called {\it lat\-tice-re\-duc\-i\-ble\/} if, 
for $\,\tvs\nnh,H\,$ and $\,L\,$ associated with $\,\mathcal{E}\hs$ and 
$\,\bg\,$ as in Sect.~\ref{bg}, there exists $\,\tvs\hn'$ such that
\begin{equation}\label{lrd}
\,\tvs\hn'\mathrm{\ is\ a\ nonzero\ proper\ 
}\,H\nh\hyp\mathrm{in\-var\-i\-ant\ }\,L\hyp\mathrm{subspace\ of\ 
}\,\tvs\nnh.
\end{equation}
(See Definition~\ref{lsbsp}.) To emphasize the role of $\,\tvs\hn'$ in 
(\ref{lrd}), we also say that
\begin{equation}\label{qdr}
\mathrm{the\ lat\-tice}\hyp\mathrm{re\-duc\-i\-bil\-i\-ty\ condition\ 
(\ref{lrd})\ holds\ for\ %the\ quadruple\ 
}\hs(\tvs\nnh,H,L,\nnh\tvs\hn')\hh.
\end{equation}
As shown by Hiss and Szcze\-pa\'n\-ski 
\cite{hiss-szczepanski}, {\it every compact flat Riemannian manifold 
of dimension greater than one is lat\-tice-re\-duc\-i\-ble}. For details,
see the Appendix.

Given a Bie\-ber\-bach group $\,\bg\,$ in a Euclidean \af\ space 
$\,\mathcal{E}\hs$ and an \af\ sub\-space $\,\mathcal{E}\nh'$ of 
$\,\mathcal{E}\hs$ parallel, as in (\ref{orb}), to a vector sub\-space
$\,\tvs\hn'$ of its 
translation space $\,\tvs\nh$, satisfying (\ref{lrd}) -- (\ref{qdr}),
we denote by $\,\stb\hn'$ the {\it stabilizer group of\/}
$\,\mathcal{E}\nh'$ {\it %relative to the action of
in\/} $\,\bg\hh$, so that
\begin{equation}\label{stb}
\stb\hn'\mathrm{\ consists\ of\ all\ the\ elements\ of\ }\,\bg\hs\mathrm{\ 
mapping\ }\,\mathcal{E}\nh'\mathrm{\ onto\ itself.} 
\end{equation}
Let $\,\gamma\in\bg$. As the foliation of $\,\mathcal{E}\hs$ formed by the
cosets of $\,\tvs\hn'$ is $\,\bg$-in\-var\-i\-ant, cf.\ (\ref{lrd}),
\begin{equation}\label{iff}
\gamma\in\stb\hn'\mathrm{\ if\ and\ only\ if\ 
}\,\gamma(\mathcal{E}\nh')\,\mathrm{\ intersects\ }\,\mathcal{E}\nh'\nh.
\end{equation}
\begin{theorem}\label{restr}
For a lat\-tice-re\-duc\-i\-ble Bie\-ber\-bach 
group\/ $\,\bg\,$ in a Euclidean \af\ space\/ $\,\mathcal{E}\hs$ and a 
vector subspace\/ $\,\tvs\hn'$ of\/ $\,\tvs\,$ with\/ {\rm(\ref{lrd})}, the 
following conclusions hold.
\begin{enumerate}
  \def\theenumi{{\rm\roman{enumi}}}
\item The \af\ sub\-spaces of dimension\/ $\,\dim\tvs\hn'$ in\/ 
$\,\mathcal{E}$, parallel to\/ $\,\tvs\hn'$ in the sense of\/
{\rm(\ref{orb})}, are the leaves of a foliation\/
$\,F\hskip-3pt_{\mathcal{E}}\w$ on\/ $\,\mathcal{E}$,   
pro\-ject\-a\-ble under the covering projections\/ 
$\,\mathrm{pr}:\mathcal{E}\to\mathcal{M}=\mathcal{E}/\hh\bg\,$ and\/ 
$\,\mathcal{E}\to\mathcal{T}=\,\mathcal{E}/\nh L\,$ onto foliations\/ 
$\,F\hskip-3.8pt_{\mathcal{M}}\w\,$ of\/ $\,\mathcal{M}\,$ and\/ 
$\,F\hskip-3pt_{\mathcal{T}}\w$ of the torus\/ 
$\,\mathcal{T}=\,\mathcal{E}/\nh L$, both of which have compact totally 
geodesic leaves, tangent to a parallel distribution.
% on\/ $\,\mathcal{M}$, or on\/ $\,\mathcal{T}\nnh$.
\item The leaves\/ $\,\mathcal{M}\hn'$ of\/ 
$\,F\hskip-3.8pt_{\mathcal{M}}\w\hs$ 
coincide with the\/ $\,\mathrm{pr}$-im\-ages of the
leaves\/ $\,\mathcal{E}\nh'$ of\/ 
$\,F\hskip-3pt_{\mathcal{E}}\w$, and the restrictions\/ 
$\,\mathrm{pr}:\mathcal{E}\nh'\nh\to\mathcal{M}\hn'$ are covering projections. 
The same remains true if one replaces $\,\mathcal{M}\,$ and\/ 
$\,\mathrm{pr}\,$ with\/ $\,\mathcal{T}\,$ and the projection\/ 
$\,\mathcal{E}\to\mathcal{T}\nnh$. Any such\/ $\,\mathcal{M}\hn'\nnh$, being a 
compact flat Riemannian manifold, corresponds via Remark\/~{\rm\ref{bijct}} to 
a Bie\-ber\-bach group\/ $\,\bg\hn'$ in the Euclidean \af\ space\/ 
$\,\mathcal{E}\nh'\nnh$. For\/ $\,L\nh'\nnh,H\nh'$ appearing in the\/
$\,\mathcal{M}\hn'\nnh$-an\-a\-log\/ 
$\,L\nh'\nh\to\hs\bg\hn'\nh\to H\nh'$ of\/ {\rm(\ref{exa})},
with\/ $\,\stb\hn'$ defined by\/ {\rm(\ref{stb})},
\begin{enumerate}
  \def\theenumi{{\rm\alph{enumi}}}
\item $\bg\hn'$ consists of the restrictions to\/ $\,\mathcal{E}\nh'$ of all the 
elements of\/ $\,\stb\hn'\nnh$,
\item $H\nh'$ is formed by the restrictions to\/ $\,\tvs\hn'$ of the linear 
parts of elements of \/ $\,\stb\hn'\nnh$,
\item $L\nh'\nh=\bg\hn'\nh\cap\tvs\hn'\nnh$, as in\/ {\rm(\ref{lah})}, and\/ 
$\,L\cap\tvs\hn'\subseteq L\nh'\nnh$.
\end{enumerate}
\item The restriction homo\-mor\-phism\/ $\,\stb\hn'\nnh\to\bg\hn'$ of\/
{\rm(ii-a)} is an iso\-mor\-phism.
\end{enumerate}
\end{theorem}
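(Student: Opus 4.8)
The plan is to assemble conclusion (i) from the coset foliation of Section~\ref{as} together with Lemmas~\ref{cplvs} and~\ref{cptlv}, then to realize each leaf $\,\mathcal{M}\hn'$ as the quotient of the \af\ subspace $\,\mathcal{E}\nh'$ by its stabilizer $\,\stb\hn'\nnh$, and finally to deduce (ii)--(iii) from this description, the only substantial point being injectivity of the restriction homomorphism. For (i), I would first note that the cosets of $\,\tvs\hn'$ form the foliation $\,F_{\mathcal{E}}\,$ of (\ref{orb}), that its tangent distribution is translation-invariant, hence parallel, and that it is $\,\bg$-in\-var\-i\-ant: each $\,\gamma\in\bg\,$ has linear part in $\,H$, and $\,\tvs\hn'$ is $\,H$-in\-var\-i\-ant by (\ref{lrd}), so $\,\gamma\,$ carries every coset of $\,\tvs\hn'$ onto another, as already recorded in (\ref{iff}); the same holds a fortiori for $\,L\subseteq\bg\hs$. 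By the deck-trans\-for\-ma\-tion-invariance criterion for projectability used in the proof of Lemma~\ref{cplvs}, $\,F_{\mathcal{E}}\,$ therefore descends under both $\,\mathrm{pr}:\mathcal{E}\to\mathcal{M}\,$ and $\,\mathcal{E}\to\mathcal{T}\,$ to parallel distributions whose leaves, being images of \af\ subspaces under local isometries, are totally geodesic. Since $\,\tvs\hn'$ is an $\,L$-sub\-space, Lemma~\ref{cplvs} makes the leaves of $\,F_{\mathcal{T}}\,$ compact; as $\,F_{\mathcal{T}}\,$ is in turn projectable onto $\,F_{\mathcal{M}}\,$ under the covering $\,\mathcal{T}\to\mathcal{M}$, Lemma~\ref{cptlv}(iv) transfers compactness to the leaves of $\,F_{\mathcal{M}}$, completing~(i).

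For (ii) I would fix a leaf $\,\mathcal{E}\nh'$ of $\,F_{\mathcal{E}}\,$ and set $\,\mathcal{M}\hn'\nh=\mathrm{pr}(\mathcal{E}\nh')$. The decisive observation is that the fibres of $\,\mathrm{pr}\,$ restricted to $\,\mathcal{E}\nh'$ are exactly the orbits of $\,\stb\hn'\nnh$: if $\,x,x'\in\mathcal{E}\nh'$ and $\,x'\nh=\gamma(x)\,$ for some $\,\gamma\in\bg$, then $\,\gamma(\mathcal{E}\nh')\,$ meets $\,\mathcal{E}\nh'\nnh$, whence $\,\gamma\in\stb\hn'$ by (\ref{iff}). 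Consequently $\,\mathcal{M}\hn'\nh=\mathcal{E}\nh'\nh/\stb\hn'\nnh$, and since $\,\stb\hn'\subseteq\bg\,$ acts freely and properly discontinuously by isometries (Remark~\ref{frpdc}) on the complete flat Euclidean \af\ space $\,\mathcal{E}\nh'\nnh$, the quotient is a compact flat Riemannian manifold and $\,\mathrm{pr}:\mathcal{E}\nh'\nh\to\mathcal{M}\hn'$ is a covering projection; the identical argument, with $\,\stb\hn'$ replaced by the stabilizer $\,L\cap\tvs\hn'$ of $\,\mathcal{E}\nh'$ in $\,L$, handles $\,\mathcal{E}\to\mathcal{T}\nnh$. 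By Remark~\ref{bijct} the Bie\-ber\-bach group of $\,\mathcal{M}\hn'$ in $\,\mathcal{E}\nh'$ is the group of restrictions $\,\gamma|_{\mathcal{E}\nh'}$ with $\,\gamma\in\stb\hn'\nnh$, which is (ii-a). Then (ii-b) and (ii-c) follow formally: the linear part of $\,\gamma|_{\mathcal{E}\nh'}$ is $\,A|_{\tvs\hn'}$, legitimate because $\,A=\lp(\gamma)\,$ preserves $\,\tvs\hn'\nnh$, so $\,H\nh'$ consists of these restrictions; and $\,L\nh'\nh=\bg\hn'\nh\cap\tvs\hn'$ is the defining relation (\ref{lah}) for $\,\bg\hn'\nnh$, while a translation by $\,v\in\tvs\hn'$ lying in $\,L\,$ preserves $\,\mathcal{E}\nh'$ and restricts to a translation of it, giving $\,L\cap\tvs\hn'\subseteq L\nh'\nnh$.

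The one step requiring real work is (iii), injectivity of $\,\stb\hn'\nh\to\bg\hn'$ (surjectivity being (ii-a)). Here I would argue that any $\,\gamma\in\stb\hn'$ restricting to the identity on $\,\mathcal{E}\nh'$ fixes $\,\mathcal{E}\nh'$ pointwise; choosing an origin $\,o\in\mathcal{E}\nh'\nnh$, the transformation $\,\gamma\,$ then has a fixed point, while its linear part $\,A=\lp(\gamma)\,$ lies in the finite group $\,H\,$ and hence satisfies $\,A^m\nh=1\,$ for some $\,m\ge1$. A fixed point together with $\,A^m\nh=1\,$ forces $\,\gamma^m\nh=\mathrm{id}$, so the torsion element $\,\gamma\,$ of the tor\-sion-free group $\,\bg\,$ must be trivial. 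This is the crux: tor\-sion-free\-ness is precisely what excludes a nontrivial isometry fixing $\,\mathcal{E}\nh'$ pointwise while rotating the normal space, an element of the type described in (\ref{afx}). By contrast, elements of $\,\stb\hn'$ whose linear part is trivial on $\,\tvs\hn'$ but not on its orthogonal complement are permitted and restrict to translations of $\,\mathcal{E}\nh'\nnh$, which is why the inclusion $\,L\cap\tvs\hn'\subseteq L\nh'$ in (ii-c) is in general strict.
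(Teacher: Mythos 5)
Your proposal is correct and follows essentially the same route as the paper: part (i) from the $\,\bg$- and $\,L$-in\-var\-i\-ance of the coset foliation together with Lemmas~\ref{cplvs} and~\ref{cptlv}, part (ii) from identifying the fibres of $\,\mathrm{pr}\,$ restricted to $\,\mathcal{E}\nh'$ with the orbits of $\,\stb\hn'$ via (\ref{iff}), and part (iii) from the impossibility of a nontrivial element of $\,\bg\,$ fixing $\,\mathcal{E}\nh'$ pointwise (the paper quotes fix\-ed-point-free\-ness from Remark~\ref{frpdc} directly, whereas you re-derive it from tor\-sion-free\-ness and the finite order of the linear part -- an equivalent detour). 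The one loose joint is the clause asserting that $\,\mathcal{E}\nh'\nh/\hn\stb\hn'$ is \emph{compact} merely because the action is free and properly discontinuous -- that alone would not suffice; compactness really comes from $\,L\cap\tvs\hn'$ being a full lattice in $\,\tvs\hn'$ (condition (\ref{lrd}) with Lemma~\ref{lttce}(a)), equivalently from the factorization $\,\mathcal{E}\nh'\nh\to\mathcal{T}\hh'\nh\to\mathcal{M}\hn'$ through the compact torus that the paper uses -- but this ingredient is already present in your part (i), so the argument closes.
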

The last inclusion of (ii-c) may be proper; see the end of Sect.~\ref{gk}.

We chose to format the proof Theorem~\ref{restr} as the whole next section,
since some parts of it are of independent interest, and can in this way be
more comfortably cited later in the text (Sections~\ref{gi} -- \ref{gg},
\ref{ig} -- \ref{ih}, \ref{rh}).

\section{Proof of Theorem~\ref{restr}}\label{pt}
The projectability of the foliation $\,F\hskip-3pt_{\mathcal{E}}\w$ under both 
covering projections 
$\,\mathrm{pr}:\mathcal{E}\to\mathcal{M}\,$ and 
$\,\mathcal{E}\to\mathcal{T}\,$ follows as a trivial consequence from the fact 
that, due to the $\,H\nh$-in\-var\-i\-ance of $\,\tvs\hn'\nnh$,
\begin{equation}\label{inv}
F\hskip-3pt_{\mathcal{E}}\w\,\mathrm{\ is\ 
}\,\bg\nh\hyp\mathrm{in\-var\-i\-ant\ and, obviously,\ 
}\,L\hyp\mathrm{in\-var\-i\-ant,}
\end{equation}
while Lemma~\ref{cptlv}(ii) implies the integrability of the image
distribution. 
Next,
\begin{equation}\label{cmp}
\begin{array}{l}
\mathrm{pr}\nh\,\mathrm{\ is\ the\ composition\ 
}\,\mathcal{E}\hn\to\nh\mathcal{T}\hskip-3pt
\to\nh\mathcal{M}\nh\,\mathrm{\ 
of\ two\ mappings\hskip-3pt:\hskip.8ptthe}\\
\mathrm{u\-ni\-ver\-sal}\hyp\mathrm{cov\-er\-ing\hskip1pt\
projection\hskip1pt\ of\hskip1pt\ the\hskip1pt\ 
flat\hskip1pt\ torus\hskip.3pt\ }\,\hs\mathcal{T}\hn=\,\mathcal{E}/\nh L,\\
\mathrm{and\hs\hs\ the\hs\hs\ quotient\hs\hs\ projection\hs\hs\ for\hs\hs\
the\hs\hs\ action\hs\hs\ of\hs\ 
}\,\,\hs\bg\hs\,\,\mathrm{\hs\hs\ on\hs\hs\ }\,\,\mathcal{T}\nnh,
\end{array}
\end{equation}
the latter action clearly becoming free if one replaces $\,\bg\,$ with 
$\,\bg/\nh L\cong\nh H$. Both factor mappings, 
$\,\mathcal{E}\to\mathcal{T}\,$ and 
$\,\mathcal{T}\nh\to\mathcal{M}$, are covering projections -- the first 
since $\,L\,$ is a lattice in $\,\tvs\nh$, the second due to 
Remark~\ref{covpr}(a). Parts (iii)\hs--\hs(iv) of Lemma~\ref{cptlv}, along 
with Lemma~\ref{cplvs}, may now be applied to the foliations 
$\,F\hskip-3pt_{\mathcal{T}}\w$ and $\,F\hskip-3.8pt_{\mathcal{M}}\w$ of the 
torus $\,\mathcal{T}\,$ and of $\,\mathcal{M}\,$ obtained as projections of 
$\,F\hskip-3pt_{\mathcal{E}}\w$, proving the last (com\-pact-leaves) claim of 
(i), as well as the first two sentences of (ii).

We now fix a leaf $\,\mathcal{E}\nh'$ of $\,F\hskip-3pt_{\mathcal{E}}\w$, 
and choose a leaf $\,\mathcal{M}\hn'$ of $\,F\hskip-3.8pt_{\mathcal{M}}\w$ 
containing $\,\mathrm{pr}(\mathcal{E}\nh')$, cf.\ Lemma~\ref{cptlv}(i). It 
follows that
\begin{equation}\label{cpr}
\mathrm{pr}:\mathcal{E}\nh'\nh\to\mathcal{M}\hn'\,\mathrm{\ is\ a\
(surjective)\ covering\ projection,}
\end{equation}
since (\ref{cmp}) decomposes
$\,\mathrm{pr}:\mathcal{E}\nh'\nh\to\mathcal{M}\hn'$ 
into the composition 
$\,\mathcal{E}\nh'\nh\to\mathcal{T}\hh'\nnh\to\nh\mathcal{M}\hn'\nnh$, 
in which the first mapping is the u\-ni\-ver\-sal-cov\-er\-ing projection of 
the torus $\,\mathcal{T}\hh'\nh=\hs\mathcal{E}\nh'\nnh/\nh L\nh'\nnh$, and the 
second one must be a covering due to Remark~\ref{covpr}(b).

Two points of $\,\mathcal{E}\nh'$ have the same $\,\mathrm{pr}$-im\-age if and 
only if one is transformed into the other by an element of the group 
$\,\bg\hn'$ described in assertion (ii-a); namely, the `only if' part follows 
since, given $\,x,y\in\mathcal{E}\nh'$ with
$\,\mathrm{pr}(x)=\mathrm{pr}(y)\,$ 
in $\,\mathcal{M}=\mathcal{E}/\hh\bg\hh$, the element of $\,\bg$ sending 
$\,x\,$ to $\,y\,$ must lie in $\,\stb\hn'$ by (\ref{iff}). Furthermore, 
$\,\bg\hn'$ acts on $\,\mathcal{E}\nh'$ freely since $\,\bg\,$ does so on
$\,\mathcal{E}\hs$ (Remark~\ref{frpdc}). Thus, $\,\bg\hn'$ 
coincides with the deck transformation group for the u\-ni\-ver\-sal 
cov\-er\-ing projection (\ref{cpr}), and satisfies (ii-a). Next, (ii-b) and
(ii-c) are consequences of the definitions of $\,H\nh'$ and $\,L\nh'\nh$.

Finally, (iii) follows since nontrivial elements of $\,\stb\hn'\nnh$, being 
fix\-ed-point free (Remark~\ref{frpdc}), have nontrivial restrictions to 
$\,\mathcal{E}\nh'\nnh$. 

\section{Geometric consequences of Lemma~\ref{cplvs} and
Proposition~\ref{invcp}}\label{gc}
Hiss and Szcze\-pa\'n\-ski's result mentioned in the Introduction, combined
with Remark~\ref{bijct}, Proposition~\ref{invcp} 
and Theorem~\ref{restr}, has the following immediate consequence.
\begin{theorem}\label{twopr}
Every compact flat Riemannian manifold\/ $\,\mathcal{M}\hs$ of dimension\/ 
$\,n\ge2$ admits two proper parallel distributions\/ $\,D\,$ and\/  
$\,\hat D\hs$ with compact leaves, which are complementary to each other 
in the sense that\/ $\,T\nh\mathcal{M}\hh=\hs D\oplus\hat D\nh$.
\end{theorem}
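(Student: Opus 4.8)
The plan is to assemble Theorem~\ref{twopr} directly from the machinery already in place, treating it exactly as the surrounding text advertises: an "immediate consequence" of Hiss--Szcze\-pa\'n\-ski combined with Remark~\ref{bijct}, Proposition~\ref{invcp} and Theorem~\ref{restr}. The starting point is the lat\-tice-re\-duc\-i\-bil\-i\-ty of $\,\mathcal{M}$. Writing $\,\bg\,$ for the Bie\-ber\-bach group with lattice $\,L\,$ and holonomy $\,H\,$ in the translation space $\,\tvs\nnh$, the Hiss--Szcze\-pa\'n\-ski theorem (stated in Sect.~\ref{lr}) furnishes a nonzero proper $\,H\nh$-in\-var\-i\-ant $\,L$-sub\-space $\,\tvs\hn'$ of $\,\tvs\nnh$, i.e.\ condition (\ref{lrd}) holds for $\,(\tvs\nnh,H,L,\nnh\tvs\hn')$.

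\emph{First} I would invoke Proposition~\ref{invcp} to produce an $\,H\nh$-in\-var\-i\-ant $\,L$-sub\-space $\,\tvs\hn''$ complementary to $\,\tvs\hn'$ in the sense of (\ref{cpl}), so that $\,\tvs=\tvs\hn'\nnh\oplus\tvs\hn''\nnh$. Since $\,\tvs\hn'$ is nonzero and proper, so is $\,\tvs\hn''\nnh$, and being an $\,H\nh$-in\-var\-i\-ant $\,L$-sub\-space it satisfies (\ref{lrd}) in its own right. \emph{Second}, each of $\,\tvs\hn'$ and $\,\tvs\hn''$ gives rise, via Theorem~\ref{restr}(i), to a foliation of $\,\mathcal{M}=\mathcal{E}/\hh\bg\,$ with compact totally geodesic leaves tangent to a parallel distribution; call these distributions $\,D\,$ and $\,\hat D\nh$. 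Concretely, $\,D\,$ (respectively $\,\hat D$) is the pushdown under $\,\mathrm{pr}:\mathcal{E}\to\mathcal{M}\,$ of the translation-invariant distribution on $\,\mathcal{E}\,$ tangent to $\,\tvs\hn'$ (respectively to $\,\tvs\hn''$); both pushdowns are well defined precisely because of the $\,\bg$-in\-var\-i\-ance recorded in (\ref{inv}).

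\emph{Third}, I would verify the complementarity $\,T\nh\mathcal{M}\hh=\hs D\oplus\hat D\nh$. This is where the only real content lies, and it is genuinely short: complementarity is a fibre\-wise, hence local, condition, and $\,\mathrm{pr}\,$ is a local dif\-feo\-mor\-phism, so it suffices to check it upstairs on $\,\mathcal{E}$. There the distributions tangent to $\,\tvs\hn'$ and $\,\tvs\hn''$ are the constant distributions with those two fibres, and $\,\tvs=\tvs\hn'\nnh\oplus\tvs\hn''$ says exactly that at each point the tangent space splits as the (internal) direct sum of the two fibres. Pushing this identity forward under the fibre\-wise isomorphism $\,d\hs\mathrm{pr}\,$ yields $\,T\nh\mathcal{M}\hh=\hs D\oplus\hat D\,$ pointwise.

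The properness of both distributions is immediate ($\,0<\dim\tvs\hn',\dim\tvs\hn''<n$), and the compact-leaf property is delivered wholesale by Theorem~\ref{restr}(i), so \emph{the main obstacle is essentially dispatched before the argument even begins}: it is the existence of the $\,H\nh$-in\-var\-i\-ant complement, which is the entire point of Proposition~\ref{invcp}. What makes the theorem "immediate" is that the nonorthogonal complement supplied there is still an $\,L$-sub\-space, so that Theorem~\ref{restr} applies verbatim to $\,\tvs\hn''$ and not merely to $\,\tvs\hn'$; without the $\,L$-sub\-space conclusion one would obtain a complementary parallel distribution whose leaves need not be compact. Thus the proof reduces to chaining these three cited results, with the direct-sum check being the only line requiring verification.
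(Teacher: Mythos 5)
Your proposal is correct and follows exactly the route the paper intends: the paper presents Theorem~\ref{twopr} as an immediate consequence of Hiss--Szcze\-pa\'n\-ski's reducibility result combined with Remark~\ref{bijct}, Proposition~\ref{invcp} and Theorem~\ref{restr}, which is precisely the chain you assemble. Your explicit verification of the fibrewise direct-sum condition via the local diffeomorphism $\,\mathrm{pr}\,$ just fills in the detail the paper leaves to the reader.
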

Theorem~\ref{restr}(i), Lemma~\ref{cplvs} and Corollary~\ref{dcomp} also
easily imply that the tangent bundle $\,T\nh\mathcal{M}\,$ of any compact
flat Riemannian manifold $\,\mathcal{M}\,$ admits a maximal di\-rect-sum
decomposition into parallel sub\-bun\-dles (distributions) with compact
leaves, maximality meaning that none of the summand sub\-bun\-dles can be
further decomposed in the same manner. Hiss and Szcze\-pa\'n\-ski's result 
\cite{hiss-szczepanski} guarantees that, unless
$\,\dim\mathcal{M}<2$, at least two such summand distributions are present.

Decompositions just mentioned may be quite far from unique: when
$\,\mathcal{M}\,$ is an $\,n$-torus, they stand in a bijective correspondence
with decompositions of a $\,n$-di\-men\-sion\-al  rational vector space into a
direct sum of lines. This
is why one probably should not expect them to have interesting general
properties.

\section{Geometries of individual leaves}\label{gi}
Throughout this section we adopt the assumptions and notation of 
Theorem~\ref{restr}. The $\,\bg$-in\-var\-i\-ance of the foliation 
$\,F\hskip-3pt_{\mathcal{E}}\w$, cf.\ (\ref{inv}), trivially gives rise to
the obvious
\begin{equation}\label{act}
\mathrm{isometric\ action\ of\ }\,\bg\hs\mathrm{\ on\ the\ 
quotient\ Euclidean\ \af\ space\ }\,\mathcal{E}\hn/\hn\tvs\hn'
\end{equation}
(that is, on the leaf space of $\,F\hskip-3pt_{\mathcal{E}}\w$, the points of 
which coincide with the \af\ sub\-spaces $\,\mathcal{E}\nh'$ of 
$\,\mathcal{E}\hs$ parallel to $\,\tvs\hn'$). Whenever 
$\,\mathcal{E}\nh'\nnh\in\mathcal{E}\hn/\hn\tvs\hn'$ is fixed, its stabilizer
group $\,\stb\hn'$ in 
(\ref{stb}) obviously coincides with the iso\-tropy group of $\,\mathcal{E}\nh'$  
for (\ref{act}). The action (\ref{act}) is not effective, as the kernel of the 
corresponding homo\-mor\-phism 
$\,\bg\to\mathrm{Iso}\,[\mathcal{E}\hn/\hn\tvs\hn']$ clearly contains the 
group $\,L\nh'\nh=L\cap\tvs\hn'$ forming a lattice in $\,\tvs\hn'\nnh$, cf.\ 
Definition~\ref{lsbsp} and Lemma~\ref{lttce}(a). Now the
$\,H\nh$-in\-var\-i\-ance
of $\,L\,$ -- see (\ref{cnj}) -- combined with the
$\,H\nh$-in\-var\-i\-ance of 
$\,\tvs\hn'$ shows that $\,L\nh'\nh=L\cap\tvs\hn'$ is a normal
sub\-group of $\,\bg\hh$, which leads to a further homo\-mor\-phism 
$\,\bg/\nh L\nh'\nh\to\mathrm{Iso}\,[\mathcal{E}\hn/\hn\tvs\hn']\,$ (still in 
general noninjective, cf.\ (\ref{nni}) below). Let 
$\,\mathrm{pr}\,$ again stand for the covering projection 
$\,\mathcal{E}\to\mathcal{M}=\mathcal{E}/\hh\bg$.

Given $\,\mathcal{E}\nh'\nnh\in\mathcal{E}\hn/\hn\tvs\hn'$ and a vector 
$\,v\in\tvs\hs$ orthogonal to $\,\tvs\hn'\nnh$, we set 
$\,\mathcal{M}_v'\nh=\mathrm{pr}(\mathcal{E}\nh'\nnh+v)$, so that,
according to (\ref{cpr}), 
$\,\mathcal{M}_0'=\mathcal{M}\hn'\nnh$. Again by (\ref{cpr}), 
\begin{equation}\label{unc}
\mathrm{pr}:\mathcal{E}\nh'\nnh+v\to\mathcal{M}_v'\mathrm{\ is\ a\ 
lo\-cal\-ly}\hyp\mathrm{i\-so\-met\-ric\ 
u\-ni\-ver\-sal}\hyp\mathrm{cov\-er\-ing\ projection.}
\end{equation}
and $\,\mathcal{M}_v'$ must be a (compact) leaf of
$\,F\hskip-3.8pt_{\mathcal{M}}\w$. We also
\begin{equation}\label{svp}
\begin{array}{l}
\mathrm{choose\hs\hs\ }\,\,\rd\hs\,\mathrm{\hs\hs\ as\hs\ in\hs\
Lemma~\ref{nrexp}\hs\ and\hs\ 
Lemma~\ref{fltmf}\hs\ for\hs\ the\hs\ sub\-man\-i\-fold}\\
\mathcal{M}\hn'\nnh=\hn\mathrm{pr}(\mathcal{E}\nh')\mathrm{\ with\
(\ref{cpr})\ of\ the\ compact\ flat\ manifold\ 
}\,\mathcal{M}\hn=\mathcal{E}/\hh\bg,\\
\mathrm{and\ denote\ by\
}\,\stb_{\nh v}'\hh\subseteq\hs\bg\,\mathrm{\
the\ stabilizer\ group\ of\ }\,\,\mathcal{E}\nh'+\,v\mathrm{,\ cf.\
(\ref{stb}).}
\end{array} 
\end{equation}
\begin{lemma}\label{isolv}
Under the above hypotheses, for any\/ 
$\,\mathcal{E}\nh'\nnh\in\mathcal{E}\hn/\hn\tvs\hn'$ there exists\/ 
$\,\rd\in(0,\infty)$ such that, whenever\/ $\,u\in\tvs\hs$ is a unit vector 
orthogonal to\/ $\,\tvs\hn'$ and\/ $\,r,s\in(0,\rd)$, the iso\-me\-tries\/ 
$\,\mathcal{E}\nh'\nnh+ru\to\mathcal{E}\nh'\nnh+su\,$ and 
$\,\mathcal{E}\nh'\nnh+ru\to\mathcal{E}\nh'$ acting via translations by the vectors 
$\,(s-r)u$ and, respectively, $\,-ru\hh$, descend under the 
u\-ni\-ver\-sal-cov\-er\-ing projections\/ {\rm(\ref{unc})}, with\/ 
$\,v\,$ equal to\/ $\,ru,su\,$ or\/ $\,0$, to an iso\-me\-try\/ 
$\,\mathcal{M}_{\hh r\hn u}'\nh\to\mathcal{M}_{\hh su}'$ or, respectively, a\/ 
$\,k$-fold covering projection\/ 
$\,\mathcal{M}_{\hh r\hn u}'\nh\to\mathcal{M}\hn'\nnh$, where the integer\/ 
$\,k=k(u)\ge1\,$ may depend on $\,u\,$ -- see the end of Sect.~{\rm\ref{gk}}
-- but not on\/ $\,r$.
\end{lemma}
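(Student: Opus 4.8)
The plan is to work entirely upstairs on the Euclidean affine space $\,\mathcal{E}\,$ and push everything down via the universal-covering projections in (\ref{unc}). First I would fix $\,\mathcal{E}\nh'\nnh\in\mathcal{E}\hn/\hn\tvs\hn'\,$ and choose $\,\rd\,$ as in (\ref{svp}), using Lemma~\ref{nrexp} and Lemma~\ref{fltmf} applied to the compact leaf $\,\mathcal{M}\hn'\nnh=\mathrm{pr}(\mathcal{E}\nh')\,$; the role of smallness of $\,\rd\,$ is to guarantee that the normal tube $\,\mathcal{M}'\hskip-4.1pt_\rd\w\,$ is a clean diffeomorphic image of the disk bundle, so that the affine slices $\,\mathcal{E}\nh'\nnh+ru\,$ for $\,r\in(0,\rd)\,$ project to honest leaves $\,\mathcal{M}_{\hh r\hn u}'\,$ sitting inside this tube as the distance levels described in Lemma~\ref{fltmf}(a).

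The heart of the argument is a descent check. The translation of $\,\mathcal{E}\,$ by $\,(s-r)u\,$ carries $\,\mathcal{E}\nh'\nnh+ru\,$ isometrically onto $\,\mathcal{E}\nh'\nnh+su\,$; since $\,u\,$ is orthogonal to $\,\tvs\hn'\nnh$, this translation commutes with the translational $\,\tvs\hn'$-action and hence with the whole deck group $\,\stb_{\nh r\hn u}'\,$ of the covering $\,\mathrm{pr}:\mathcal{E}\nh'\nnh+ru\to\mathcal{M}_{\hh r\hn u}'\,$. I would verify directly from (\ref{iff}) that $\,\stb_{\nh r\hn u}'=\stb_{\nh su}'\,$: an element $\,\gamma\in\bg\,$ stabilizes $\,\mathcal{E}\nh'\nnh+ru\,$ iff it stabilizes the parallel slice $\,\mathcal{E}\nh'\,$ (because the $\,\bg$-action permutes the cosets of $\,\tvs\hn'$, and the orthogonal offset $\,ru\,$ is carried along rigidly by any isometry preserving that coset family), so in fact both stabilizers equal $\,\stb\hn'\,$. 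Granting this, the translation by $\,(s-r)u\,$ conjugates $\,\stb_{\nh r\hn u}'\,$ onto $\,\stb_{\nh su}'\,$ equivariantly, and therefore descends to a well-defined isometry $\,\mathcal{M}_{\hh r\hn u}'\nh\to\mathcal{M}_{\hh su}'\,$ of the quotients.

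For the second map I would instead use the translation by $\,-ru\,$, which sends $\,\mathcal{E}\nh'\nnh+ru\,$ onto $\,\mathcal{E}\nh'\,$. Here the relevant stabilizers again coincide, $\,\stb_{\nh r\hn u}'=\stb\hn'\,$, but the point is that $\,\bg\hn'\,$ acting on $\,\mathcal{E}\nh'\,$ contains extra elements: by Theorem~\ref{restr}(ii-c) the inclusion $\,L\cap\tvs\hn'\subseteq L\nh'\,$ may be proper, and more generally the deck group for $\,\mathrm{pr}:\mathcal{E}\nh'\nnh+ru\to\mathcal{M}_{\hh r\hn u}'\,$ need only be a finite-index subgroup of the deck group $\,\bg\hn'\,$ for $\,\mathrm{pr}:\mathcal{E}\nh'\to\mathcal{M}\hn'\,$. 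Consequently the descended map is a covering projection of some finite degree $\,k=k(u)\ge1\,$, namely the index of the former subgroup in the latter. Independence of $\,r\,$ then follows because, after translating everything back to $\,\mathcal{E}\nh'\,$, the two deck groups being compared no longer involve $\,r\,$ at all.

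The main obstacle I anticipate is the stabilizer-identification step $\,\stb_{\nh v}'=\stb\hn'\,$ for $\,v=ru\,$ orthogonal to $\,\tvs\hn'\,$. One must argue that an isometry $\,\gamma\in\bg\,$ preserving the coset foliation and mapping $\,\mathcal{E}\nh'\nnh+v\,$ into $\,\mathcal{E}\nh'\nnh+v\,$ necessarily fixes the \emph{base} coset $\,\mathcal{E}\nh'\,$, and vice versa; this uses that the linear part of $\,\gamma\,$ preserves $\,\tvs\hn'\,$ and hence its orthogonal complement, so the normal offset $\,v\,$ is sent to a normal offset of the same length, forcing compatibility of the two slices. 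Once this rigidity is in hand, the descent of both translations and the computation of $\,k(u)\,$ as a subgroup index are routine, and the claim that $\,k\,$ is independent of $\,r\,$ is automatic.
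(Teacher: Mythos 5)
There is a genuine gap, and it sits exactly where you yourself locate ``the main obstacle'': the asserted identity $\,\stb'_{ru}=\stb'_{su}=\stb'_0\,$ is false, and the statement of the lemma already tells you so, since it allows the covering degree $\,k=k(u)\,$ to exceed $\,1$, whereas equality of the two deck groups would force $\,k=[\stb'_0:\stb'_{ru}]=1$. Concretely, an element $\,\gamma\in\stb'_0\,$ has a linear part $\,A\,$ preserving $\,\tvs\hn'$ and hence its orthogonal complement, but $\,A\,$ may act nontrivially on that complement: in the quotient $\,\mathcal{E}\hn/\hn\tvs\hn'$ the induced isometry fixes the point $\,\mathcal{E}\nh'$ and sends $\,\mathcal{E}\nh'\nnh+ru\,$ to $\,\mathcal{E}\nh'\nnh+rAu$, a different coset unless $\,Au=u$. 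The generalized Klein bottle of Sect.~\ref{gk} realizes this: the stabilizer of $\,\tvs\hn'$ itself is $\,[(1/n)\bbZ]\times\{0\}$, while nearby cosets have the index-$n$ subgroup $\,\bbZ\times\{0\}\,$ as stabilizer. The inclusion that \emph{is} true, $\,\stb'_{ru}\subseteq\stb'_0\,$ for small $\,r$, is not a consequence of the offset being ``carried along rigidly''; in the paper it is Remark~\ref{delta} together with Lemma~\ref{cmmut}, whose proof uses Baire category, real-analyticity and discreteness of $\,\bg\,$ -- and, crucially, uses Lemma~\ref{isolv} as an input, so you cannot invoke those facts here without circularity. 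The same circularity affects your claims that the translation by $\,(s-r)u\,$ commutes with the deck group (this needs the linear parts of elements of $\,\stb'_{ru}$ to fix $\,u$, which is Lemma~\ref{cmmut}(b)) and that $\,k\,$ is independent of $\,r\,$ (which is Lemma~\ref{cmmut}(a) in disguise).

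The paper's proof avoids stabilizers altogether. It exploits the tubular-neighborhood structure of Lemmas~\ref{nrexp} and~\ref{fltmf}: letting $\,\psi_c\,$ be the self-map of the tube corresponding under $\,\mathrm{Exp}^\perp$ to multiplication of normal vectors by $\,c$, one checks that $\,\mathrm{pr}\circ\phi=\psi_c\circ\mathrm{pr}\,$ on $\,\mathcal{E}\nh'\nnh+ru\,$ with $\,c=s/r\,$ or $\,c=0$, because $\,\mathrm{pr}\,$ carries the radial segments $\,t\mapsto x+tu\,$ to the minimizing normal geodesics. Hence $\,\mathrm{pr}(\phi(z))\,$ depends only on $\,\mathrm{pr}(z)\,$ and $\,\phi\,$ descends; the descended maps are finite coverings by Remark~\ref{covpr}(b), the first is bijective by symmetry in $\,r\,$ and $\,s$, and the independence of $\,k\,$ from $\,r\,$ follows by comparing the composition $\,\mathcal{M}_{\hh su}'\to\mathcal{M}_{\hh r\hn u}'\to\mathcal{M}\hn'$ with the covering $\,\mathcal{M}_{\hh su}'\to\mathcal{M}\hn'\nnh$. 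To repair your argument you would have to replace the stabilizer identity by such a descent criterion, or else first establish the inclusion $\,\stb'_{ru}\subseteq\stb'_0\,$ and the required conjugation-equivariance independently -- which amounts to proving Lemma~\ref{cmmut} before Lemma~\ref{isolv}, reversing the paper's logical order.
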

\begin{proof}For $\,\rd\,$ selected in (\ref{svp}) and any 
$\,c\in[\hs0,1]$, let 
$\,\psi\nnh_c\w:\mathcal{M}\nh_\rd\w\to\nh\mathcal{M}\nh_\rd\w$ correspond, 
via the $\,\mathrm{Exp}^\perp\nnh$-dif\-feo\-mor\-phic identification of 
Lemma~\ref{nrexp}(a), to the mapping 
$\,\mathcal{N}\hskip-3pt_\rd\w\to\nh\mathcal{N}\hskip-3pt_\rd\w$ which 
multiplies vectors normal to $\,\mathcal{M}\hn'$ by the scalar $\,c$. With 
$\,\phi\,$ denoting our iso\-me\-try 
$\,\mathcal{E}\nh'\nnh+ru\to\mathcal{E}\nh'\nnh+su\,$ (or, 
$\,\mathcal{E}\nh'\nnh+ru\to\mathcal{E}\nh'$) we now have 
$\,\mathrm{pr}\circ\phi=\psi\nnh_c\w\nh\circ\hh\mathrm{pr}\,$ on 
$\,\mathcal{E}\nh'\nnh+ru$, where $\,c=s/r\,$ 
(or, respectively, $\,c=0$) \,since, given $\,x\in\mathcal{E}\nh'\nnh$, the 
$\,\mathrm{pr}$-im\-age of the line segment $\,\{x+tu:t\in[\hs0,\rd)\}$ in 
$\,\mathcal{E}\hs$ is the length $\,\rd\,$ minimizing geodesic segment in 
$\,\mathcal{M}'\hskip-4.1pt_\rd\w$ emanating from the point 
$\,y=\mathrm{pr}(x)\in\mathcal{M}\hn'$ in a direction normal to 
$\,\mathcal{M}\hn'\nnh$, and $\,\mathrm{pr}\circ\phi\,$ sends $\,x+tu$, in 
both cases, to $\,\mathrm{pr}(x+ctu)=\psi\nnh_c\w(\mathrm{pr}(x+tu))$. The 
$\,\mathrm{pr}$-im\-age of $\,\phi(z)$, for any $\,z\in\mathcal{E}\nh'\nnh+ru$, 
thus depends only on $\,\mathrm{pr}(z)\,$ (by being its 
$\,\psi\nnh_c\w\nh$-im\-age), and so both original isometries $\,\phi\,$ 
descend to (necessarily lo\-cal\-ly-i\-so\-met\-ric) mappings 
$\,\mathcal{M}_{\hh r\hn u}'\nh\to\mathcal{M}_{\hh su}'$ and 
$\,\mathcal{M}_{\hh r\hn u}'\nh\to\mathcal{M}\hn'\nnh$, which constitute finite 
coverings (Remark~\ref{covpr}(b)). The former is also bi\-jec\-tive, 
its inverse arising when one switches $\,r\hs$ and $\,s$. As the composition  
$\,\mathcal{M}_{\hh su}'\nh\to\mathcal{M}_{\hh r\hn u}'\nh\to\mathcal{M}\hn'$ 
clearly equals the analogous covering projection 
$\,\mathcal{M}_{\hh su}'\nh\to\mathcal{M}\hn'$ (with $\,s\,$ rather than 
$\,r$), the coverings $\,\mathcal{M}_{\hh r\hn u}'\nh\to\mathcal{M}\hn'$ and 
$\,\mathcal{M}_{\hh su}'\nh\to\mathcal{M}\hn'$ have the same multiplicity, 
which completes the proof.
\end{proof}
\begin{remark}\label{delta}Replacing $\,\rd\,$ of (\ref{svp}) with $\,1/4\,$ 
times its original value, we can also require it to have the following 
property: {\it if\/ $\,\gamma\in\bg\,$ and\/ $\hs\,x\in\mathcal{E}\hs$ 
are such that both\/ $\,x$ and\/ $\,\gamma(x)$ lie in the\/ 
$\,\rd$-neigh\-bor\-hood of\/ $\,\mathcal{E}\nh'\nnh$, defined as in  
Sect.~{\rm\ref{as}}, then\/ $\,\gamma\in\stb\hn'$ for the stabilizer 
group\/ $\,\stb\hn'$ of\/ $\,\mathcal{E}\nh'$ given by\/ {\rm(\ref{stb})}}. 
In fact, letting $\,\mathcal{E}\nh''$ be the leaf of 
$\,F\hskip-3pt_{\mathcal{E}}\w$ through $\,x$, we see from (\ref{inv}) that 
its $\,\gamma$-im\-age $\,\gamma(\mathcal{E}\nh'')\,$ is also a leaf of 
$\,F\hskip-3pt_{\mathcal{E}}\w$, while both leaves are within the distance 
$\,\rd\,$ from $\,\mathcal{E}\nh'\nnh$, which yields 
$\,\mathrm{dist}\hh(\mathcal{E}\nh''\nnh,\gamma(\mathcal{E}\nh''))<2\rd\,$ and 
so, due to the triangle inequality, 
$\,\mathrm{dist}\hh(\mathcal{E}\nh'\nnh,\gamma(\mathcal{E}\nh'))
\le\mathrm{dist}\hh(\mathcal{E}\nh'\nnh,\mathcal{E}\nh'')
+\mathrm{dist}\hh(\mathcal{E}\nh''\nnh,\gamma(\mathcal{E}\nh''))
+\mathrm{dist}\hh(\gamma(\mathcal{E}\nh''),\gamma(\mathcal{E}\nh'))
<\rd+2\rd+\rd=4\rd$. Thus, $\,x+ru\in\gamma(\mathcal{E}\nh')\,$ for some 
$\,x\in\mathcal{E}\nh'\nnh$, some unit vector $\,u\in\tvs$ orthogonal to 
$\,\tvs\hn'\nnh$, and 
$\,r=\mathrm{dist}\hh(\mathcal{E}\nh'\nnh,\gamma(\mathcal{E}\nh'))\in[\hs0,4\rd)$. 
Assuming now (\ref{svp}) with $\,\rd$ replaced by $\,4\rd$, one gets 
$\,r=0$, that is, $\,\gamma(\mathcal{E}\nh')=\mathcal{E}\nh'$ and 
$\,\gamma\in\stb\hn'\nnh$. Namely, the $\,\mathrm{pr}$-im\-age of the curve 
$\hs[\hs0,4\rd)\ni t\mapsto x+tu\hs$ is a geodesic in the image of the 
dif\-feo\-mor\-phism $\,\mathrm{Exp}^\perp$ of Lemma~\ref{nrexp}(a), which 
intersects $\,\mathcal{M}\hn'$ only at $\,t=0$, while 
$\,\mathcal{M}\hn'\nh=\mathrm{pr}(\mathcal{E}\nh')
=\mathrm{pr}(\gamma(\mathcal{E}\nh'))$, since 
$\,\mathcal{M}=\mathcal{E}/\hh\bg$.
\end{remark}
\begin{lemma}\label{cmmut}
Let there be given\/ $\,\tvs\hn'\nnh,\mathcal{E}\nh'$ as in 
Lemma\/~{\rm\ref{isolv}}, $\,\rd\,$ having the additional property of 
Remark\/~{\rm\ref{delta}}, any\/ $\,r\in(0,\rd)$, and any unit vector\/ 
$\,u\in\tvs$ orthogonal to\/ $\,\tvs\hn'\nnh$.
\begin{enumerate}
  \def\theenumi{{\rm\alph{enumi}}}
\item The stabilizer group\/ $\,\stb_{\hh r\hn u}'$ in\ 
{\rm(\ref{svp})} does not depend on\/ $\,r\in(0,\rd)$.
\item The linear part of each element of\/ $\,\stb_{\hh r\hn u}'$ 
keeps\/ $\,u\,$ fixed.
\item $\stb_{\hh r\hn u}'$ is a sub\-group of\/ $\,\stb_0'$ 
with the finite index\/ $\,k=k(u)\ge1\,$ of \,Lemma\/~{\rm\ref{isolv}},
\item $\mathrm{pr}:\mathcal{E}\to\mathcal{M}\,$ %=\mathcal{E}/\hh\bg\,$ 
maps the\/ $\,\rd$-neigh\-bor\-hood\/ $\,\mathcal{E}\hskip-2pt_\rd\w$ of\/ 
$\,\mathcal{E}\nh'$ in\/ $\,\mathcal{E}\hs$ onto\/ $\,\mathcal{M}'\hskip-4.1pt_\rd\w\hs$
of \,Lemma\/~{\rm\ref{nrexp}(a)}.
\item $\mathcal{E}\hskip-2pt_\rd\w$ \ and\/
$\,\mathcal{M}'\hskip-4.1pt_\rd\w\,$ are
unions of leaves of, respectively, $\,F\hskip-3pt_{\mathcal{E}}\w\,$ and\/ 
$\,F\hskip-3.8pt_{\mathcal{M}}\w$.
\item The pre\-im\-age under\/ 
$\,\mathrm{pr}:\mathcal{E}\hskip-2pt_\rd\w\nh\to\mathcal{M}\nh_\rd\w\hskip2pt$
of the leaf\/ $\,\mathcal{M}_{\hh r\hn u}'=\hs\mathrm{pr}(\mathcal{E}\nh'\nnh+ru)\,$
of\/ $\,F\hskip-3.8pt_{\mathcal{M}}\w\,$ equals the union of the images\/ 
$\,\gamma(\mathcal{E}\nh'\nnh+ru)\,$ over all\/ $\,\gamma\in\stb_0'$.
\end{enumerate}
\end{lemma}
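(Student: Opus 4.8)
The plan is to derive the two geometric statements (d), (e) directly from the tubular-neighborhood results, and then to obtain (a)--(c) together with (f) from a single mechanism: a short bookkeeping computation in the orthogonal splitting $\tvs=\tvs\hn'\oplus\tvs\hn''$ (where $\tvs\hn''$ is the orthogonal complement of $\tvs\hn'$), combined with the crucial input of Remark~\ref{delta}. First I would dispose of (e): on the affine side, the observation in Sect.~\ref{as} that the $\rd$-neighborhood of an \af\ subspace parallel to $\tvs\hn'$ is a union of cosets of $\tvs\hn'$ shows that $\mathcal{E}\hskip-2pt_\rd\w$ is a union of leaves of $F_{\mathcal{E}}$, while Lemma~\ref{fltmf}(a) says the same for $\mathcal{M}'\hskip-4.1pt_\rd\w$ and $F_{\mathcal{M}}$. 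For (d), since $\,\mathrm{pr}\,$ is a local isometry with $\,\mathrm{pr}(\mathcal{E}\nh')=\mathcal{M}\hn'$ by (\ref{cpr}), it does not increase distance to these sets, giving $\,\mathrm{pr}(\mathcal{E}\hskip-2pt_\rd\w)\subseteq\mathcal{M}'\hskip-4.1pt_\rd\w$; conversely any $\,y\in\mathcal{M}'\hskip-4.1pt_\rd\w$ is joined to its foot point in $\,\mathcal{M}\hn'$ by a minimizing normal geodesic of length $<\rd$ (Lemma~\ref{nrexp}(b)), which lifts from a point of $\,\mathcal{E}\nh'$ to a straight segment normal to $\,\tvs\hn'$ of the same length (Lemma~\ref{nrexp}(c)), whose endpoint lies in $\,\mathcal{E}\hskip-2pt_\rd\w$ and maps to $\,y$.

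For the stabilizer statements I would fix $\,p\in\mathcal{E}\nh'$ and, for $\,\gamma\in\bg\,$ with linear part $\,A\in H$, write $\,\gamma(p)=p+b'+b''$ with $\,b'\in\tvs\hn'$, $\,b''\in\tvs\hn''$; here $\,A\,$ preserves $\,\tvs\hn'$ by (\ref{lrd}) and hence $\,\tvs\hn''$. A direct computation gives $\,\gamma\in\stb_0'$ iff $\,b''=0$, and $\,\gamma\in\stb_{ru}'$ iff $\,b''=r(u-Au)$ (using $\,Au\in\tvs\hn''$). Now if $\,\gamma\in\stb_{ru}'$ with $\,r\in(0,\rd)$, the point $\,x=p+ru\,$ and its image $\,\gamma(x)\in\mathcal{E}\nh'+ru\,$ both lie at distance $\,r<\rd\,$ from $\,\mathcal{E}\nh'$, so Remark~\ref{delta} forces $\,\gamma\in\stb\hn'=\stb_0'$, whence $\,b''=0\,$ and therefore $\,r(u-Au)=0$, i.e. $\,Au=u$. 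This is (b); and the two conditions now read $\,b''=0\,$ and $\,Au=u\,$ with no reference to $\,r$, so $\,\gamma\,$ stabilizes every $\,\mathcal{E}\nh'+su$. Hence $\,\stb_{ru}'$ is independent of $\,r\,$ (part a) and is contained in $\,\stb_0'$.

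It then remains, for (c), to identify the index $\,[\stb_0':\stb_{ru}']\,$ with the multiplicity $\,k=k(u)\,$ of Lemma~\ref{isolv}. Both $\,\mathrm{pr}:\mathcal{E}\nh'\to\mathcal{M}\hn'$ and $\,\mathrm{pr}:\mathcal{E}\nh'+ru\to\mathcal{M}_{ru}'$ are universal coverings, cf.\ (\ref{cpr}) and (\ref{unc}), with deck groups $\,\stb_0'$ and $\,\stb_{ru}'$ (Theorem~\ref{restr}(ii-a),(iii)). Because each $\,\gamma\in\stb_{ru}'$ fixes $\,u\,$ by (b), conjugation by the translation $\,T_{ru}\,$ carries the action of $\,\gamma\,$ on $\,\mathcal{E}\nh'+ru\,$ to its action on $\,\mathcal{E}\nh'$ as an element of $\,\stb_0'$; consequently the covering $\,\mathcal{M}_{ru}'\to\mathcal{M}\hn'$ supplied by Lemma~\ref{isolv} (the descent of $\,T_{-ru}$) is exactly the quotient covering $\,(\mathcal{E}\nh'+ru)/\stb_{ru}'\to(\mathcal{E}\nh'+ru)/\stb_0'$, of degree $\,[\stb_0':\stb_{ru}']$. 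Comparing with the value $\,k\,$ then yields $\,[\stb_0':\stb_{ru}']=k$, finite. I expect this matching of the Lemma~\ref{isolv} covering with the algebraic quotient covering to be the one step needing genuine care, precisely because it relies on the conjugation identity that (b) makes available.

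Finally, for (f), I would note that the full $\,\mathrm{pr}$-preimage of $\,\mathcal{M}_{ru}'=\mathrm{pr}(\mathcal{E}\nh'+ru)\,$ in $\,\mathcal{E}\,$ is $\,\bigcup_{\gamma\in\bg}\gamma(\mathcal{E}\nh'+ru)$, since $\,\mathcal{M}=\mathcal{E}/\hh\bg$. If $\,z\,$ lies in this preimage and in $\,\mathcal{E}\hskip-2pt_\rd\w$, write $\,z=\gamma(z')\,$ with $\,z'\in\mathcal{E}\nh'+ru$; then $\,z'$ sits at distance $\,r<\rd\,$ from $\,\mathcal{E}\nh'$, so $\,z'$ and $\,z=\gamma(z')\,$ both lie in $\,\mathcal{E}\hskip-2pt_\rd\w$ and Remark~\ref{delta} gives $\,\gamma\in\stb_0'$. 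Conversely each $\,\gamma\in\stb_0'$ sends $\,\mathcal{E}\nh'+ru\,$ to $\,\mathcal{E}\nh'+r(Au)$, a coset at distance $\,r<\rd\,$ from $\,\mathcal{E}\nh'$ and hence inside $\,\mathcal{E}\hskip-2pt_\rd\w$. Therefore the preimage within $\,\mathcal{E}\hskip-2pt_\rd\w$ is exactly $\,\bigcup_{\gamma\in\stb_0'}\gamma(\mathcal{E}\nh'+ru)$, which is (f). The remaining computations (the two displayed equivalences for $\,\stb_0'$ and $\,\stb_{ru}'$, and the distance estimates) are routine and need not be ground out here.
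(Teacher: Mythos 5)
Your proof is correct, and for the core assertions (a)--(c) it takes a genuinely different --- and more elementary --- route than the paper's. The paper proves (a) and (b) by producing, for each $\gamma\in\stb_{\hh r\hn u}'$ and each $s$, an element $\hat\gamma\in\stb_{\hn su}'$ with $\gamma(x)+v=\hat\gamma(x+v)$, then invoking Baire's theorem and real-analyticity to make $\hat\gamma$ independent of $x$, and finally using discreteness of $\bg$ to show that the curve $s\mapsto\hat\gamma$ is constant; the index in (c) is afterwards computed by counting normal geodesics and identifying their endpoints with an orbit of $\stb_0'$ on a finite set of cosets. You instead write $\gamma(p)=p+b'+b''$ in the orthogonal splitting of $\tvs$, characterize membership in $\stb_0'$ by $b''=0$ and in $\stb_{\hh r\hn u}'$ by $b''=r(u-Au)$, and let Remark~\ref{delta} do all the work: it forces $\stb_{\hh r\hn u}'\subseteq\stb_0'$, so $b''=0$ and $Au=u$ hold simultaneously, which yields (b), the $r$-independence in (a), and the containment in (c) in one stroke. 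For the index you identify the covering $\mathcal{M}_{\hh r\hn u}'\to\mathcal{M}\hn'$ of Lemma~\ref{isolv} with the quotient covering attached to the inclusion $\stb_{\hh r\hn u}'\subseteq\stb_0'$, via the conjugation identity (translation by $ru$ commutes with $\gamma$ on $\mathcal{E}\nh'$) that (b) makes available; this replaces the paper's geodesic count, is the one step genuinely requiring care, and is carried out correctly. Parts (d)--(f) coincide with the paper's very brief arguments, (f) being the same appeal to Remark~\ref{delta}, spelled out. One structural difference worth noting: the paper's proof of (a)--(c) does not use Remark~\ref{delta} at all, whereas yours relies on it throughout; since the lemma's hypotheses include that property of $\rd$, this is entirely legitimate, and it is what buys you the shorter argument.
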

\begin{proof}By (\ref{cpr}) and (\ref{svp}), 
$\,\mathcal{M}_v'=(\mathcal{E}\nh'\nnh+v)/\hs\bg_{\nh v}'$, if one lets 
$\,\bg_{\nh v}'$ denote the image of $\,\stb_{\nh v}'$ under the 
injective homo\-mor\-phism of restriction to $\,\mathcal{E}\nh'\nnh$, cf.\ 
Theorem~\ref{restr}(iii). Fixing $\,s\in[\hs0,\rd)$ and $\,r\in(0,\rd)\,$ we 
therefore conclude from Lemma~\ref{isolv} and (\ref{iff}) that, whenever 
$\,x\in\mathcal{E}\nh'\nnh+ru$ and $\,\gamma\in\stb_{\hh r\hn u}'$, 
there exists $\,\hat\gamma\in\stb_{\hn su}'$ satisfying the condition
\begin{equation}\label{gxp}
\gamma(x)\,+\,v\,=\,\hat\gamma(x+v)\hh,\hskip12pt\mathrm{where\ 
}\,\,v=(s-r)u\hh,\hskip8pt\mathrm{and\ }\,\,\hat\gamma=\gamma\,\,\mathrm{\ 
when\ }\,\,s=r,
\end{equation}
the last clause being obvious since $\,\gamma,\hat\gamma\in\bg\,$ and the 
action of $\,\bg\,$ is free. With $\,u\,$ and $\,\gamma\,$ fixed as well, for 
each given $\,\hat\gamma\in\stb_{\hn su}'$ the set of all 
$\,x\in\mathcal{E}\nh'\nnh+ru\,$ having Property (\ref{gxp}) is closed in 
$\,\mathcal{E}\nh'\nnh+ru\,$ while, as we just saw, the union of these sets over 
all $\,\hat\gamma\in\stb_{\hn su}'$ equals $\,\mathcal{E}\nh'\nnh+ru$. Thus, 
by Baire's theorem (Lemma~\ref{baire}), some 
$\,\hat\gamma\in\stb_{\hn su}'$ 
satisfies (\ref{gxp}) with all $\,x\,$ from some nonempty open subset of 
$\,\mathcal{E}\nh'\nnh+ru$, and hence -- by real-an\-a\-lyt\-ic\-i\-ty -- for 
all $\,x\in\mathcal{E}\nh'\nnh+ru$. In terms of the translation $\,\tau\nnh_v\w$ 
by the vector $\,v$, we consequently have 
$\,\hat\gamma=\tau\nnh_v\w\circ\gamma\circ\tau\nnh_v^{-\nh1}$ on 
$\,\mathcal{E}\nh'\nnh+su$, so that, due to Theorem~\ref{restr}(iii),
$\,\gamma\,$ uniquely determines $\,\hat\gamma$, while the 
assignment $\,\gamma\mapsto\hat\gamma\,$ is a homo\-mor\-phism 
$\,\stb_{\hh r\hn u}'\to\stb_{\hn su}'\nh\subseteq\hh\bg\hh$, and 
$\,\zeta
=\hat\gamma\circ\tau\nnh_v\w\circ\gamma^{-\nh1}\nh\circ\tau\nnh_v^{-\nh1}$ 
equals the identity on $\,\mathcal{E}\nh'\nnh+su$. If we now allow $\,s\,$ to 
vary from $\,r$ to $\,0$, the resulting curve $\,s\mapsto\zeta$ consists of
\af\ extensions, defined as in (\ref{afx}), of linear 
self-isom\-e\-tries of the orthogonal complement of $\,\tvs\hn'\nnh$, and 
$\,\hat\gamma=\zeta\circ\tau\nnh_v\w\circ\gamma\circ\tau\nnh_v^{-\nh1}$ on 
$\,\mathcal{E}$. As $\,\bg\,$ is discrete, the curve 
$\,s\mapsto\hat\gamma\in\bg\hh$, with $\,v=(s-r)u$, must be constant, and can 
be evaluated by setting $\,s=r\,$ (or, $\,v=0$). Thus, 
$\,\hat\gamma=\gamma\,$ on $\,\mathcal{E}\hs$ from the last clause of 
(\ref{gxp}), and so 
$\,\stb_{\hh r\hn u}'\nh\subseteq\hh\stb_{\hn su}'$. For $\,s>0$, 
switching $\,r$ with $\,s\,$ we get the opposite inclusion, and (a) follows. 
Also, taking the linear part of the resulting relation 
$\,\gamma=\zeta\circ\tau\nnh_v\w\circ\gamma\circ\tau\nnh_v^{-\nh1}$, we see 
that $\,\zeta\,$ equals the identity, for all $\,s$. Hence 
$\,\gamma=\tau\nnh_v\w\circ\gamma\circ\tau\nnh_v^{-\nh1}\nh$, that is,
$\,\gamma\,$ commutes with 
$\,\tau\nnh_v\w$ which, by (\ref{cnj}), amounts to (b). Setting 
$\,s=0$, we obtain the first part of (c): 
$\,\stb_{\hh r\hn u}'\nh\subseteq\hh\stb_0'$. Assertion (d) is 
clear as $\,\mathrm{pr}$, being locally isometric, maps line segments onto 
geodesic segments. Lemma~\ref{fltmf}(a) for 
$\,D=F\hskip-3.8pt_{\mathcal{M}}\w$ gives (e). With 
$\,\mathrm{pr}:\mathcal{E}\to\mathcal{M}=\mathcal{E}/\hh\bg\,$ in
Theorem~\ref{restr}(i), the additional property of $\,\rd\hs$
(Remark~\ref{delta}) yields (f). Finally, 
for $\,k=k(u)$, the geodesic $\,[\hs0,r]\ni t\mapsto\mathrm{pr}(x+tu)$, normal 
to $\,\mathcal{M}\hn'$ at $\,y=\mathrm{pr}(x)$, is one of $\,k\,$ such 
geodesics $\,[\hs0,r]\ni t\mapsto\mathrm{pr}(x+tw)$, joining $\,y\,$ to points 
of its pre\-im\-age under the %covering 
projection $\,\mathcal{M}_{\hh r\hn u}'\nh\to\mathcal{M}\hn'$ of 
Lemma~\ref{isolv}, where $\,w\,$ ranges over a $\,k$-el\-e\-ment set 
$\,\mathcal{R}\,$ of unit vectors in $\,\tvs\nh$, orthogonal to 
$\,\tvs\hn'\nnh$. The union of the corresponding subset 
$\,C=\{\mathcal{E}\nh'\nnh+rw:w\in\mathcal{R}\}$ of the leaf space of 
$\,F\hskip-3pt_{\mathcal{E}}\w$ equals the pre\-im\-age in (f) -- and hence an 
orbit for the action of $\,\stb_0'$ -- as every leaf in the 
pre\-im\-age contains a point nearest $\,x$. Due to the 
al\-ready-es\-tab\-lished inclusion 
$\,\stb_{\hh r\hn u}'\nh\subseteq\hh\stb_0'$ and (\ref{svp}), 
$\,\stb_{\hh r\hn u}'$ is the iso\-tropy group of 
$\,\mathcal{E}\nh'\nnh+ru$ relative to the transitive action of 
$\,\stb_0'$ on $\,C$, and so $\,k$, the cardinality of $\,C$, equals 
the index of $\,\stb_{\hh r\hn u}'$ in $\,\stb_0'$, which 
proves the second part of (c).
\end{proof}

\section{The generic stabilizer group}\label{gg}
Given a Bie\-ber\-bach group $\,\bg\,$ in a Euclidean \af\ space 
$\,\mathcal{E}\hs$ with the translation vector space $\,\tvs\nnh$, let us fix 
a vector subspace $\,\tvs\hn'$ of $\,\tvs\hs$ satisfying (\ref{lrd}) for
$\,L,H\,$ introduced in (\ref{exa}). As long 
as $\,\dim\,\mathcal{E}\ge2$, such $\,\tvs\hn'$ always exists 
(Sect.~\ref{lr}). An element $\,\mathcal{E}\nh'$ of 
$\,\mathcal{E}\hn/\hn\tvs\hn'\nnh$, that is, a coset of $\,\tvs\hn'$ in 
$\,\mathcal{E}$, will be called {\it generic\/} if its stabilizer
group $\,\stb\hn'\subseteq\bg\hh$, defined by (\ref{stb}), equals
\begin{equation}\label{krh}
\mathrm{the\ kernel\ of\ the\ homo\-mor\-phism\ 
}\,\bg\to\mathrm{Iso}\,[\mathcal{E}\hn/\hn\tvs\hn']\mathrm{\ corresponding\ 
to\ (\ref{act}).}
\end{equation}
The $\,\mathrm{pr}$-im\-ages of generic cosets %$\,\mathcal{E}\nh'$
of $\,\tvs\hn'$ will be called generic leaves %$\,\mathcal{M}\hn'$
of $\,F\hskip-3.8pt_{\mathcal{M}}\w$.

Still using the symbols $\,L,H\,$ and $\,\mathrm{pr}\,$ for
the groups appearing in (\ref{lah}) -- (\ref{exa}) and 
the u\-ni\-ver\-sal-cov\-er\-ing projection 
$\,\mathcal{E}\to\mathcal{M}=\mathcal{E}/\hh\bg\hh$, let us also
\begin{equation}\label{kpr}
\begin{array}{l}
\mathrm{denote\ by\ }\hs K'\hn\nnh\subseteq\hn H\hs\mathrm{\ the\ normal\
sub\-group\ 
consisting\ of\ all\ elements\ of}\\
H\,\mathrm{\ that\ act\ on\ the\ orthogonal\ complement\ 
of\ }\,\tvs\hn'\hn\mathrm{\ as\ the\ identity,\hskip-1.1pt\ and}\\
\mathrm{by\ }\,\hs\mathcal{U}'\hs\mathrm{\ the\ subset\ of\ 
}\,\mathcal{E}\hn/\hn\tvs\hn'\mathrm{\ formed\ by\ all\ generic\ cosets\ 
of\ }\,\tvs\hn'\mathrm{\ in\ }\,\mathcal{E}.
\end{array} 
\end{equation}
\begin{theorem}\label{gnric}
For\/ $\,\stb\hn'$ equal to\/ {\rm(\ref{krh})}, under the assumptions
preceding\/ {\rm(\ref{krh})}, with the notation of\/ {\rm(\ref{kpr})}, one
has the following conclusions.
\begin{enumerate}
  \def\theenumi{{\rm\roman{enumi}}}
\item $\mathcal{U}'$ in\/ {\rm(\ref{kpr})} constitutes an open dense subset
of\/ 
$\,\mathcal{E}\hn/\hn\tvs\hn'\nnh$.
\item The normal sub\-group\/ $\,\stb\hn'$ of\/ $\hs\bg\,$ is contained as a 
fi\-nite-in\-dex sub\-group in the stabilizer group of every\/ 
$\,\mathcal{E}\nh'\nnh\in\mathcal{E}\hn/\hn\tvs\hn'$ for the action\/ 
{\rm(\ref{act})}, and equal to this stabilizer group if\/ 
$\,\mathcal{E}\nh'\nnh\in\mathcal{U}'\nnh$.
\item The\/ $\,\mathrm{pr}$-im\-ages\/ 
$\,\mathcal{M}\hn'\nnh,\mathcal{M}\hn''$ of any\/ 
$\,\mathcal{E}\nh'\nnh,\mathcal{E}\nh''\nnh\in\mathcal{U}'$ are isometric to each 
other.%, but not ho\-mot\-\-py-e\-quiv\-a\-lent to\/ $\,\mathrm{pr}$-im\-ages 
%of elements of\/ 
%$\,[\mathcal{E}\hn/\hn\tvs\hn']\smallsetminus\mathcal{U}'\nnh$.
\item If one identifies\/ $\,\mathcal{E}\hs$ with its translation vector space 
$\,\tvs$ via a choice of an origin, $\,\stb\hn'\nh$ becomes the set of all the 
elements of\/ $\,\bg\,$ having, for\/ $\,K\hn'$ given by\/ {\rm(\ref{kpr})}, 
the form
\begin{equation}\label{axb}
\tvs\nh\ni\hn x\hn\mapsto\nh Ax\nh+\hn 
b\in\tvs\nnh\mathrm{,\ in\ which\ }\,b\nh\in\nnh\tvs\hn'\nnh\mathrm{\ and\ 
the\ linear\ part\ }\hs A\,\mathrm{\ lies\ in\ }\hs K\hn'\nnh.
\end{equation}
\item Whenever\/ $\,\mathcal{E}\nh'\nnh\in\mathcal{U}'\nnh$, the homo\-mor\-phism 
which restricts elements of the generic stabilizer group\/ $\,\stb\hn'\nh$ 
to\/ $\,\mathcal{E}\nh'$ is injective, and the resulting iso\-mor\-phic image\/ 
$\,\bg\hn'$ of\/ $\,\stb\hn'\nh$ constitutes a Bie\-ber\-bach group in the 
Euclidean \af\ space\/ $\,\mathcal{E}\nh'\nnh$. The lattice sub\-group of\/ 
$\,\bg\hn'$ and its holonomy group\/ $\,H\nh'$ are the intersection\/ 
$\,L\nh'\nh=L\cap\tvs\hn'$ and the image\/ $\,H\nh'$ of the group\/ $\,K\hn'$ 
defined in\/ {\rm(\ref{kpr})} under the injective homo\-mor\-phism of 
restriction to\/ $\,\tvs\hn'\nnh$.
\end{enumerate}
\end{theorem}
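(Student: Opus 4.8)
The plan is to route every claim through the descended action (\ref{act}), that is, through the quotient group $\,\bar\bg=\bg/\stb'$ acting on the Euclidean \af\ space $\,\mathcal{E}/\tvs'$, whose translation vector space $\,\tvs/\tvs'$ is naturally isometric, via orthogonal projection, to the orthogonal complement $\,\tvs'^{\perp}$. First I would note that, because $\,\tvs'$ is $\,H$-invariant and $\,H$ is orthogonal, an $\,A\in H$ acts as the identity on $\,\tvs'^{\perp}$ iff it descends to the identity of $\,\tvs/\tvs'$ (iff $\,(A-1)\tvs\subseteq\tvs'$); hence $\,K'$ of (\ref{kpr}) is exactly the kernel of $\,H\to\mathrm{GL}(\tvs/\tvs')$, and $\,\bar\bg$ has the finite point group $\,H/K'$. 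Writing $\,\gamma\in\bg$ as $\,x\mapsto Ax+b$ after a choice of origin, its image acts by $\,\bar x\mapsto\bar A\bar x+\bar b$, so $\,\gamma$ lies in the kernel $\,\stb'$ of (\ref{krh}) precisely when $\,A\in K'$ and $\,b\in\tvs'$ — this is conclusion (iv), matching (\ref{axb}). Since the image of $\,L$ in $\,\tvs/\tvs'$ is the full lattice $\,L/L'$, with $\,L'=L\cap\tvs'$ (cf.\ Lemma~\ref{lttce} and the remarks closing Sect.~\ref{lv}), $\,\bar\bg$ contains a full translation lattice of finite index and is therefore a crystallographic group, so it is discrete and acts properly discontinuously.

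Conclusions (i) and (ii) then follow from standard crystallographic-group facts. For (ii): being the kernel of (\ref{act}), $\,\stb'$ fixes every coset, hence sits inside every isotropy group of (\ref{act}); the corresponding isotropy group of $\,\bar\bg$ at a point injects into $\,H/K'$ (an isometry fixing a point is determined by its linear part), so it is finite, which gives the finite-index statement, while equality for $\,\mathcal{E}'\in\mathcal{U}'$ is just the definition of genericity. For (i): $\,\mathcal{U}'$ is exactly the free locus of the $\,\bar\bg$-action. Proper discontinuity makes it open — near a point of trivial isotropy only finitely many elements can move a small ball so as to meet it, none of those finitely many nontrivial elements fixes the center, and shrinking the ball avoids their closed fixed-point sets. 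Its complement is the union, over the countably many nontrivial $\,\bar\gamma\in\bar\bg$, of the sets $\,\mathrm{Fix}(\bar\gamma)$, each a proper \af\ subspace (or empty) and so closed with empty interior; Lemma~\ref{baire} then yields density.

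For (iii) I would exploit that every $\,\gamma=(A,b)\in\stb'$ commutes with the translation $\,\tau_w$ by any $\,w\in\tvs'^{\perp}$: indeed $\,\tau_w\gamma\tau_w^{-1}=(A,\,b+(1-A)w)=(A,b)$, since $\,A\in K'$ fixes $\,\tvs'^{\perp}$. Consequently $\,\tau_w$ carries the $\,\stb'$-action on a generic coset $\,\mathcal{E}'$ isometrically onto the $\,\stb'$-action on $\,\mathcal{E}'+w$; as any two cosets differ by a unique $\,w\in\tvs'^{\perp}$, and for generic cosets the deck group of (\ref{cpr}) is precisely $\,\stb'$ (Remark~\ref{frpdc}), this descends to the desired isometry $\,\mathcal{M}'\to\mathcal{M}''$.

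Finally (v). The injectivity of $\,\stb'\to\bg'$ and the fact that $\,\bg'$ is a Bieberbach group realizing $\,\mathcal{M}'$ are Theorem~\ref{restr}(iii) and (ii). A restriction $\,(A,b)|_{\mathcal{E}'}$ is a pure translation of $\,\mathcal{E}'$ iff $\,A|_{\tvs'}=1$; combined with $\,A|_{\tvs'^{\perp}}=1$ (as $\,A\in K'$) this forces $\,A=1$, so the lattice subgroup of $\,\bg'$ consists of the translations by $\,b\in L\cap\tvs'=L'$, as claimed. The holonomy group is $\,H'=\{A|_{\tvs'}:(A,b)\in\stb'\}$, and the same implication ($\,A\in K'$ with $\,A|_{\tvs'}=1$ gives $\,A=1$) shows that restriction to $\,\tvs'$ is injective on $\,K'$. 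The remaining step — which I expect to be the main obstacle — is to identify $\,H'$ with the image of \emph{all} of $\,K'$, i.e.\ to prove that every $\,A\in K'$ occurs as the linear part of some element of $\,\stb'$. The natural attack is to choose $\,(A,b_0)\in\bg$, observe that the admissible translation parts form the coset $\,b_0+L$, and correct $\,b_0$ by a lattice vector $\,v\in L$ so that $\,b_0+v\in\tvs'$; this is possible exactly when the class of $\,b_0$ in $\,\tvs/\tvs'$ lies in $\,L/L'$, and establishing that this correction can always be carried out is the delicate heart of the holonomy identification.
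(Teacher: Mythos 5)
Your handling of (iv), (i), (ii) and (iii) is correct, and for (i)--(ii) it is a genuinely different route from the paper's. The paper obtains local constancy of $\,\mathcal{E}\nh'\mapsto\stb\hn'$ from the normal-exponential machinery of Lemmas~\ref{isolv} and~\ref{cmmut}, feeds it into Lemma~\ref{opdns}, and then upgrades ``locally constant on a dense open set'' to the full statement by real-analyticity; you instead observe that $\,\bg/\hn\stb\hn'$ is a crystallographic group on $\,\mathcal{E}\hn/\hn\tvs\hn'$ (essentially Proposition~\ref{cryst}, which the paper proves separately and later) and identify $\,\mathcal{U}'$ with its free locus -- open by proper discontinuity, dense by Lemma~\ref{baire} applied to the countably many proper \af\ fixed-point sets. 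That is shorter and bypasses Lemma~\ref{opdns} entirely. Likewise your proof of (iii), conjugating $\,\stb\hn'$ by the translation $\,\tau_w$ with $\,w\,$ orthogonal to $\,\tvs\hn'\nnh$, is more explicit than the paper's one-line deduction of (iii) from (v) via Remark~\ref{bijct}.

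The gap is in (v), exactly where you flagged it -- but the resolution is not the one you are hoping for. The surjectivity you are trying to establish (that every $\,A\in K\hn'$ is the linear part of some element of $\,\stb\hn'\nnh$, i.e., that a representative's translational part can be corrected by a lattice vector into $\,\tvs\hn'$) is \emph{false} in general. Take the ordinary Klein bottle: $\,\bg\,$ generated by $\,(x,y)\mapsto(x+\tfrac12,-y)\,$ and $\,(x,y)\mapsto(x,y+1)$, with $\,\tvs\hn'\nh=\{0\}\times\bbR\hh$. Here $\,K\hn'\nh=H\cong\bbZ_2$, since the nontrivial holonomy element is the identity on the orthogonal complement $\,\bbR\times\{0\}$; yet every element of $\,\bg\,$ with that linear part has translational part $\,(\tfrac12+k,\hh l)$, never in $\,\tvs\hn'\nnh$, so $\,\stb\hn'\nh=\{0\}\times\bbZ\,$ consists of translations and the intrinsic holonomy group $\,H\nh'$ of the generic (circle) leaves is trivial, not of order $\,2$. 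What the paper's own proof of (v) actually delivers -- it simply invokes Theorem~\ref{restr}(ii)--(iii) -- is that $\,H\nh'$ equals the set of restrictions to $\,\tvs\hn'$ of the linear parts of elements of $\,\stb\hn'\nnh$, i.e., the image of the (possibly proper) subgroup of $\,K\hn'$ consisting of those linear parts; your argument already proves this, together with the injectivity of restriction on $\,K\hn'\nnh$. So do not spend effort on the ``correction'' step: the clause of (v) identifying $\,H\nh'$ with the image of all of $\,K\hn'$ should be read with $\,K\hn'$ replaced by the subgroup of linear parts of elements of $\,\stb\hn'\nnh$ -- a reading also forced by consistency of Theorem~\ref{intrs}(c) with the Klein-bottle intersection count.
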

\begin{proof}Lemma~\ref{cmmut}(a) states that the assumptions of 
Lemma~\ref{opdns} are satisfied by the Euclidean \af\ space 
$\,\mathcal{W}\nnh=\mathcal{E}\hn/\hn\tvs\hn'$ and the mapping $\,F\,$ that 
sends $\,\mathcal{E}\nh'\nnh\in\mathcal{E}\hn/\hn\tvs\hn'$ to its stabilizer 
group $\,\stb\hn'$ with (\ref{stb}). The assignment 
$\,\mathcal{E}\nh'\nh\mapsto\stb\hn'$ is thus {\it locally constant\/} on
some open dense set
$\,\mathcal{U}'\nh\subseteq\hs\mathcal{E}\hn/\hn\tvs\hn'\nnh$. Letting 
$\,\stb\hn'$ be the constant value of this assignment assumed on a nonempty 
connected open subset $\,\mathcal{W}'$ of $\,\mathcal{U}'\nnh$, and fixing 
$\,\gamma\in\stb\hn'\nnh$, we obtain $\,\gamma(\mathcal{E}\nh')=\mathcal{E}\nh'$ for 
all $\,\mathcal{E}\nh'\nnh\in\mathcal{W}'\nnh$, and hence, from 
real-an\-a\-lyt\-ic\-i\-ty, for all 
$\,\mathcal{E}\nh'\nnh\in\mathcal{E}\hn/\hn\tvs\hn'\nnh$. Thus, our 
$\,\stb\hn'$ is contained in the stabilizer group of every 
$\,\mathcal{E}\nh'\nnh\in\mathcal{E}\hn/\hn\tvs\hn'\nnh$. Since the same applies 
also to another constant value $\,\stb\hn''$ assumed on a nonempty connected 
open set, $\,\stb\hn''\nnh=\stb\hn'$ and the phrase `locally constant' may be 
replaced with {\it constant}. By Lemma~\ref{cmmut}(c), any such
$\,\stb\hn'$ 
must be a fi\-nite-in\-dex sub\-group of each stabilizer group. As
$\,\stb\hn'$ consists of the elements of $\,\bg\,$ preserving every 
$\,\mathcal{E}\nh'\nnh\in\mathcal{U}'\nnh$, real-an\-a\-lyt\-ic\-i\-ty implies 
that they preserve all $\,\mathcal{E}\nh'\nnh\in\mathcal{E}\hn/\hn\tvs\hn'\nnh$, 
and so $\,\stb\hn'$ coincides with (\ref{krh}), which also shows that 
$\,\stb\hn'$ is a normal sub\-group of $\,\bg\hh$, and (i) -- (ii) follow.

Next, (iv) holds since the set of all 
$\,\gamma\in\mathrm{Aut}\,\mathcal{E}\hs$ satisfying (\ref{cnd}) 
clearly constitutes a sub\-group of $\,\mathrm{Aut}\,\mathcal{E}\hs$ 
containing, as a normal sub\-group, the set of all $\,\gamma$ with 
(\ref{cnd}) such that trans\-la\-tion\-al-part coset of $\,\gamma\,$ 
(see Definition~\ref{trprt}) is contained in $\,\tvs\hn'\nnh$. To verify this 
claim, note that the latter set is a normal sub\-group, being the kernel of
the
obvious homo\-mor\-phism from the sub\-group of all $\,\gamma\,$ having the 
property (\ref{cnd}) into 
the translation sub\-group $\,\tvs\nnh/\hn\tvs\hn'$ of
$\,\mathrm{Aut}\,[\mathcal{E}\hn/\hn\tvs\hn']$. More 
precisely, $\,\gamma\,$ represented by the pair $\,(A,b)\,$ (as in  
Definition~\ref{trprt}) preserves each element of
$\,\mathcal{E}\hn/\hn\tvs\hn'$ 
if and only if $\,Av\nh+\hn b\,$ differs from $\,v$, for every 
$\,v\in\tvs\nh$, by an element of $\,\tvs\hn'$ or, equivalently (as one sees 
setting $\,v=0$), $\,\tvs\hn'$ contains both $\,b\,$ and
$\,(A-\hn1)(\tvs)$.

Finally, Theorem~\ref{restr}(ii)\hs-\hs(iii) yields (v), 
while (v) implies (iii) via Remark~\ref{bijct}.
\end{proof}
The example provided by a compact flat manifold
$\,\mathcal{M}=\mathcal{E}/\hh\bg\,$ which is a Riemannian product 
$\,\mathcal{M}\nh=\nnh\mathcal{M}\hn'\nh\times\hn\mathcal{M}\hn''$ 
with $\,\mathcal{E}=\mathcal{E}\nh'\nnh\times\mathcal{E}\nh''$ and 
$\,\bg=\bg\hn'\nnh\times\bg\hn''$ for two Bie\-ber\-bach groups 
$\,\bg\hn'\nh,\bg\hn''$ in Euclidean \af\ spaces 
$\,\mathcal{E}\nh'\nh,\mathcal{E}\nh''$ having the translation vector spaces 
$\,\tvs\hn'\nnh,\tvs\hn''\nnh$, while $\,\mathcal{M}\hn'$ is not a torus,
shows that, in general,
\begin{equation}\label{nni}
\mathrm{an\ element\ of\ }\,\bg\hs\mathrm{\ acting\ trivially\ on\
}\,\mathcal{E}\hn/\hn\tvs\hn'\mathrm{\ need\ not\ lie\ in\
}\,L\nh'\nh.
\end{equation}
Namely, the $\,H\nh$-in\-var\-i\-ant sub\-space
$\,\tvs\hn'\nh\times\nh\{0\}\,$ then gives rise to the $\,\mathcal{M}\hn'$
factor foliation $\,F\hskip-3pt_{\mathcal{E}}\w$ of the product manifold
$\,\mathcal{M}$, and the action of the group $\,\bg\hn'\nh\times\nh\{1\}\,$ on
its leaf space is obviously trivial, even though not all elements of
$\,\bg\hn'\nh\times\nh\{1\}\,$ are translations.
\begin{lemma}\label{subgp}
Using any given\/ $\,\mathcal{E}\nh'\nnh\in\mathcal{U}'$ in
Theorem\/~{\rm\ref{gnric}}, where\/ $\,\tvs\hn'$ satisfying\/ {\rm(\ref{lrd})} 
is fixed, let us identify\/ $\,\stb\hn'$ with\/ $\,\bg\hn'$
and\/ $\,H\nh'$ with\/ $\,K\hn'$ via the iso\-mor\-phisms\/
$\,\stb\hn'\nh\to\bg\hn'$ and\/ $\,K\hn'\nh\to H\nh'$ 
resulting from Theorem\/{\rm~\ref{gnric}(v)}, which turns\/
$\,\bg\hn'$ and\/ $\,H\nh'$ into sub\-groups of\/ $\,\bg\,$ and\/ 
$\,H$. These sub\-groups\/ $\,\bg\hn'\nh\subseteq\bg\,$ and\/
$\,H\nh'\nh\subseteq H\,$ do not depend on the choice of\/
$\,\mathcal{E}\nh'\nnh\in\mathcal{U}'\nnh$, and neither does the mapping
degree\/ $\,d=|H\nh'|\,$ of the $\,d\hh$-fold covering 
projection $\,\mathcal{T}\hh'\nnh\to\nh\mathcal{M}\hn'\nh
=\mathcal{T}\hh'\nnh/\nh H\nh'$ analogous to those mentioned in\/
{\rm(\ref{cmp})}.% and the lines following\/ {\rm(\ref{cmp})}.
\end{lemma}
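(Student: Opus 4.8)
The plan is to recognize that, under the two identifications described in the statement, the sub\-group $\,\bg\hn'\nh\subseteq\bg\,$ is nothing but $\,\stb\hn'$ itself, while $\,H\nh'\nh\subseteq H\,$ is nothing but $\,K\hn'\nnh$; since both $\,\stb\hn'$ and $\,K\hn'$ are already known to be independent of the choice of $\,\mathcal{E}\nh'\nnh\in\mathcal{U}'\nnh$, the first assertion is immediate, and the claim about the degree then follows by evaluating it as $\,|K\hn'|$.

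First I would unwind the identifications. The iso\-mor\-phism $\,\stb\hn'\nh\to\bg\hn'$ of Theorem~\ref{gnric}(v) is the restriction of elements of $\,\stb\hn'$ to $\,\mathcal{E}\nh'\nnh$, so its inverse carries $\,\bg\hn'$ back precisely onto $\,\stb\hn'\nnh$; hence the sub\-group of $\,\bg\,$ with which $\,\bg\hn'$ gets identified equals $\,\stb\hn'\nnh$. By Theorem~\ref{gnric}(ii), for every $\,\mathcal{E}\nh'\nnh\in\mathcal{U}'\nnh$ this $\,\stb\hn'$ is the kernel \,(\ref{krh})\, of $\,\bg\to\mathrm{Iso}\,[\mathcal{E}\hn/\hn\tvs\hn']$, a single sub\-group of $\,\bg\,$ independent of $\,\mathcal{E}\nh'\nnh$. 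In the same way, $\,K\hn'$ was defined in \,(\ref{kpr})\, purely through $\,H\,$ and $\,\tvs\hn'\nnh$, so it too is independent of $\,\mathcal{E}\nh'\nnh$; as the iso\-mor\-phism $\,K\hn'\nh\to H\nh'$ is restriction to $\,\tvs\hn'\nnh$, identifying $\,H\nh'$ with a sub\-group of $\,H\,$ returns exactly $\,K\hn'\nnh$. This settles the independence of both sub\-groups.

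For the degree I would appeal to the factorization used in the proof of Theorem~\ref{restr}, by which $\,\mathrm{pr}:\mathcal{E}\nh'\nh\to\mathcal{M}\hn'$ of \,(\ref{cpr})\, splits as $\,\mathcal{E}\nh'\nh\to\mathcal{T}\hh'\nh\to\mathcal{M}\hn'$ with $\,\mathcal{T}\hh'\nh=\mathcal{E}\nh'\nnh/\nh L\nh'$ the covering torus and the second arrow the quotient by the free action of $\,\bg\hn'\nnh/\nh L\nh'\nnh$; by the $\,\mathcal{M}\hn'\nnh$-an\-a\-log of \,(\ref{exa})\, the latter group is $\,H\nh'\nnh$, so this arrow is the $\,|H\nh'|$-fold covering of the type in \,(\ref{cmp}), whence $\,d=|H\nh'|$. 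Finally, restriction to $\,\tvs\hn'$ being injective on $\,K\hn'$ (each of its elements acts as the identity on the orthogonal complement of $\,\tvs\hn'\nnh$, so is determined by its effect on $\,\tvs\hn'$), one has $\,|H\nh'|=|K\hn'|$, which was already shown to be independent of $\,\mathcal{E}\nh'\nnh$. I expect no genuine obstacle here: the entire argument is an assembly of Theorem~\ref{gnric}, and the only point requiring care is the bookkeeping that tracks exactly which fixed sub\-group of $\,\bg\,$ or $\,H\,$ each restriction iso\-mor\-phism produces.
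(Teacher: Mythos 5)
Your argument is correct and is exactly the paper's (very terse) proof, which simply notes that the claims are immediate from (\ref{krh}) and (\ref{kpr}); you have merely spelled out the bookkeeping that the restriction isomorphisms return the fixed subgroups $\stb\hn'$ and $K\hn'$, plus the standard covering factorization for the degree. No gaps.
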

\begin{proof}Our claims about $\,\bg\hn'$ and $\,H\nh'$ are immediate
from (\ref{krh}) and (\ref{kpr}).
\end{proof}
Any lattice $\,L\,$ in the translation vector space 
$\,\tvs\hs$ of a Euclidean \af\ space $\,\mathcal{E}\hs$ is, obviously, 
a Bie\-ber\-bach group in $\,\mathcal{E}$. In the case of a fixed vector 
subspace $\,\tvs\hn'$ of $\,\tvs$ with (\ref{lrd}), all the general facts 
established about any given Bie\-ber\-bach group $\,\bg\,$ in 
$\,\mathcal{E}$, the compact flat manifold 
$\,\mathcal{M}=\mathcal{E}/\hh\bg\hh$, and the leaves $\,\mathcal{M}\hn'$ of 
$\,F\hskip-3.8pt_{\mathcal{M}}\w\hs$ (see Theorem~\ref{restr}) thus remain 
valid for the torus $\,\mathcal{T}=\,\mathcal{E}/\nh L\,$ and the leaves 
$\,\mathcal{T}\hh'$ of $\,F\hskip-3pt_{\mathcal{T}}\w$. Every coset of 
$\,\tvs\hn'$ is generic if we declare the lattice $\,L\,$ of $\,\bg\,$ 
to be the new Bie\-ber\-bach group.

\section{The leaf space}\label{ls}
We again adopt the assumptions and notation of Theorem~\ref{restr}. 
Not surprisingly, the leaf space
$\,\mathcal{M}/\nh F\hskip-3.8pt_{\mathcal{M}}\w$ has the following
property (discussed below):
$\,\mathcal{M}/\nh F\hskip-3.8pt_{\mathcal{M}}\w$ forms
\begin{equation}\label{lsp}
\begin{array}{l}
\mathrm{a\ flat\ 
compact\ or\-bi\-fold,\nnh\ canonically\ identified\ with\ the\ quotient\ of\
the}\\
\mathrm{torus\ }[\mathcal{E}\hn/\hn\tvs\hn']/[L\cap\tvs\hn']\mathrm{\ under\ 
the\ isometric\ action\ of\ the\ finite\ group\ }H\nh.
\end{array}
\end{equation}
By a {\it crystallographic group\/} \cite{szczepanski} in a Euclidean 
\af\ space one means a discrete group of isometries having a compact 
fundamental domain, cf.\ Remark~\ref{cptfd}.
\begin{proposition}\label{cryst}
Under the assumptions listed in the first line of Sect.~{\rm\ref{gg}}, 
with\/ $\,\stb\hn'$ denoting the normal sub\-group\/ {\rm(\ref{krh})} of\/ 
$\,\bg\hh$, the quotient group\/ $\,\bg/\hn\stb\hn'$ acts effectively by 
isometries on the quotient Euclidean \af\ space\/ 
$\,\mathcal{E}\hn/\hn\tvs\hn'$ and, when identified with a sub\-group of\/ 
$\,\mathrm{Iso}\,[\mathcal{E}\hn/\hn\tvs\hn']$, it constitutes a 
crystallographic group.
\end{proposition}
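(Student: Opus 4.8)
The plan is to verify the two defining features of a crystallographic group for the image of $\,\bg\,$ in $\,\mathrm{Iso}\,[\mathcal{E}\hn/\hn\tvs\hn']$ -- discreteness and the existence of a compact fundamental domain -- while effectiveness will be immediate. Indeed, by the very definition (\ref{krh}) of $\,\stb\hn'$ as the kernel of the homo\-mor\-phism $\,\bg\to\mathrm{Iso}\,[\mathcal{E}\hn/\hn\tvs\hn']\,$ induced by the isometric action (\ref{act}), the resulting homo\-mor\-phism $\,\bg/\stb\hn'\to\mathrm{Iso}\,[\mathcal{E}\hn/\hn\tvs\hn']\,$ is injective, so that $\,\bg/\stb\hn'$ acts effectively by isometries and may be identified with its image $\,\ic\,$. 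It then remains to show that $\,\ic\,$ is discrete and cocompact.

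First I would locate a translation lattice inside $\,\ic\,$. The lattice sub\-group $\,L\subseteq\bg\,$ descends to translations of $\,\mathcal{E}\hn/\hn\tvs\hn'$: translation by $\,w\in L\,$ becomes translation by $\,w+\tvs\hn'$, which is trivial precisely when $\,w\in\tvs\hn'\nnh$. Hence $\,L\cap\stb\hn'=L\cap\tvs\hn'=L\nh'$, so the image of $\,L\,$ in $\,\ic\,$ is $\,L/L\nh'$. By the discussion at the end of Sect.~\ref{lv} (together with Lemma~\ref{lttce}), $\,L/L\nh'$ is a full lattice in the translation vector space $\,\tvs\nnh/\hn\tvs\hn'$ of $\,\mathcal{E}\hn/\hn\tvs\hn'$, acting there by translations; in particular it is discrete and cocompact, its quotient torus being the one appearing in (\ref{lsp}) and its compact fundamental domain being a parallelepiped as in Remark~\ref{cptfd}.

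The remaining point is that passing from this lattice to $\,\ic\,$ costs only a finite index. Since $\,[\bg:L]=|H|<\infty\,$ by (\ref{exa}) and Remark~\ref{frpdc}, the sub\-group $\,L/L\nh'$ of $\,\ic\,$ has finite index, dividing $\,|H|$. Discreteness of $\,\ic\,$ then follows from that of $\,L/L\nh'$: writing $\,\ic\,$ as a finite union of cosets of $\,L/L\nh'$, any sequence in $\,\ic\,$ converging to the identity would, along a subsequence lying in a single coset $\,g\hh(L/L\nh')$, force a sequence in the discrete set $\,L/L\nh'$ to converge to $\,g^{-1}\nnh$, hence to be eventually constant, so that the original terms are eventually trivial. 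Finally, because $\,L/L\nh'\subseteq\ic\,$, the compact parallelepiped fundamental domain of $\,L/L\nh'$ already meets every $\,\ic$-orbit, providing a compact fundamental domain for $\,\ic\,$ and establishing cocompactness. Thus $\,\ic=\bg/\stb\hn'$ is a crystallographic group.

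I expect no serious obstacle: the argument is assembled almost entirely from earlier results, and the only genuinely new verification -- the passage from discreteness and cocompactness of $\,L/L\nh'$ to the same properties for its finite-index overgroup $\,\ic\,$ -- is the elementary lattice-plus-finite-point-group bookkeeping sketched above. The single place demanding a little care is confirming $\,L\cap\stb\hn'=L\nh'$ and that the descent of $\,L\,$ lands exactly on a \emph{full} lattice of $\,\tvs\nnh/\hn\tvs\hn'\nnh$, both of which are immediate from (\ref{krh}), Lemma~\ref{lttce}, and the end of Sect.~\ref{lv}.
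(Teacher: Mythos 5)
Your argument is correct. The fundamental-domain half is exactly the paper's: $\bg/\hn\stb\hn'$ contains the image $L/\nh L\nh'$ of $L$, a full lattice in $\tvs\nnh/\hn\tvs\hn'$ (end of Sect.~\ref{lv}), and a compact fundamental domain for this lattice subgroup already meets every orbit of the larger group. For discreteness, however, you take a genuinely different route. The paper argues upstairs in $\mathcal{E}$: given $\gamma_k\in\bg$ representing mutually distinct elements of $\bg/\hn\stb\hn'$ whose images converge in $\mathrm{Iso}\,[\mathcal{E}\hn/\hn\tvs\hn']$, it corrects each $\gamma_k$ by a translation $v_k\in L\nh'\subseteq\stb\hn'$ so as to bound $\hat\gamma_k(x)$, then uses finiteness of the linear parts to extract a convergent subsequence of distinct elements of $\bg$, contradicting the discreteness of $\bg$ itself. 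You instead stay downstairs: you note that $L/\nh L\nh'$ has index at most $|H|$ in $\ic=\bg/\hn\stb\hn'$, and that a group of isometries containing a discrete closed subgroup of finite index is itself discrete, via your pigeonhole argument over the finitely many cosets. Both are complete; the paper trades your finite-index bookkeeping for a boundedness-plus-compactness argument in $\mathrm{Iso}\,\mathcal{E}$, while yours avoids lifting back to $\mathcal{E}$ at the cost of verifying the (elementary, and correctly handled) finite-index lemma, including the needed identification $L\cap\stb\hn'=L\cap\tvs\hn'=L\nh'$. One phrasing quibble: in your last step the conclusion that the terms are ``eventually trivial'' applies to the chosen subsequence, which is all that is required to contradict the assumption that the terms were nontrivial.
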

\begin{proof}A compact fundamental domain exists since
$\,\bg/\hn\stb\hn'$ contains the lattice sub\-group $\,L/\nh L\nh'$ of
$\,\tvs\nh/\tvs\hn'$ (see the end of Sect.~\ref{lv}). To verify 
the discreteness of $\,\bg/\hn\stb\hn'\nnh$, suppose that, on the contrary,
some sequence $\,\gamma\hskip-1.7pt_k\w\in\bg\,$, $\,k=1,2\,\dots\hs$, has terms
representing 
mutually distinct elements of $\,\bg/\hn\stb\hn'$ which converge in 
$\,\mathrm{Iso}\,[\mathcal{E}\hn/\hn\tvs\hn']$. As $\,L\nh'$ is a 
lattice in $\,\tvs\hn'\nnh$, fixing $\,x\in\mathcal{E}\hs$ and suitably 
choosing $\,v\nh_k\w\in L\nh'$ we achieve boundedness of the sequence 
$\,\hat\gamma\hskip-1.7pt_k\w(x)=\gamma\hskip-1.7pt_k\w(x)+v\nh_k\w$, while
$\,\hat\gamma\hskip-1.7pt_k\w$ represent 
the same (distinct) elements of $\,\bg/\hn\stb\hn'$ as
$\,\gamma\hskip-1.7pt_k\w$. The 
ensuing  convergence of a sub\-se\-quence of $\,\hat\gamma\hskip-1.7pt_k\w$ contradicts 
the discreteness of $\,\bg$.
\end{proof}
The resulting quotient of $\,\mathcal{E}\hn/\hn\tvs\hn'$ under the action of 
$\,\bg/\hn\stb\hn'$ is thus a flat compact or\-bi\-fold \cite{davis}, which 
may clearly be identified both with the leaf space 
$\,\mathcal{M}/\nh F\hskip-3.8pt_{\mathcal{M}}\w$, and with the quotient of 
the torus $\,[\mathcal{E}\hn/\hn\tvs\hn']/[L\cap\tvs\hn']\,$
%under the action of $\,H$,
mentioned in (\ref{lsp}). The latter identification clearly implies 
the Haus\-dorff property of the leaf space
$\,\mathcal{M}/\nh F\hskip-3.8pt_{\mathcal{M}}\w$.

On the other hand, for an $\,H\nh$-in\-var\-i\-ant sub\-space $\,\tvs\hn''$ 
of $\,\tvs\hs$ {\it not\/} assumed to be an $\,L$-sub\-space, there 
exists an $\,L${\it-clo\-sure\/} of $\,\tvs\hn''\nnh$, meaning the 
smallest $\,L$-sub\-space $\,\tvs\hn'$ of $\,\tvs\hs$ containing 
$\,\tvs\hn''\nnh$, which is obviously obtained by intersecting all such 
$\,L$-sub\-spaces (Lemma~\ref{spint}). The leaf space 
$\,\mathcal{M}/\nh F\hskip-3.8pt_{\mathcal{M}}\w$ corresponding to 
$\,\tvs\hn'$ then forms a natural ``Haus\-dorff\-iz\-ation'' of the leaf space 
of $\,\tvs\hn''\nnh$, and may also be described in terms of 
Haus\-dorff-Gro\-mov limits. See the recent paper 
\cite{bettiol-derdzinski-mossa-piccione}.

\section{Intersections of generic complementary leaves}\label{ig}
Throughout this section $\,\bg\,$ is a given Bie\-ber\-bach group in a 
Euclidean \af\ space $\,\mathcal{E}\hs$ of dimension $\,n\ge2$, while 
$\,\tvs\hn'\nnh,\tvs\hn''$ are two mutually complementary 
$\,H\nnh$-in\-var\-i\-ant $\,L$-sub\-spaces of the translation vector space 
$\,\tvs\hs$ of $\,\mathcal{E}$, in the sense of (\ref{cpl}) and 
Definition~\ref{lsbsp}, for $\,L\,$ and $\,H\,$ associated with $\,\bg\,$ via 
(\ref{lah}). We also fix generic cosets $\,\mathcal{E}\nh'$ of $\,\tvs\hn'$ 
and $\,\mathcal{E}\nh''$ of $\,\tvs\hn''$ (see the beginning of 
Sect.~\ref{gg}), which leads to the analogs 
$\,L\nh'\nh\to\hs\bg\hn'\nh\to H\nh'$ and 
$\,L\nh''\nh\to\hs\bg\hn''\nh\to H\nh''$ of (\ref{exa}), described by 
Theorem~\ref{restr}(ii) and, $\,\mathcal{E}\nh'\nnh,\mathcal{E}\nh''$ being generic, 
Theorem~\ref{gnric}(v) yields $\,L\nh'\nh=L\cap\tvs\hn'$ and 
$\,L\nh''\nh=L\cap\tvs\hn''\nnh$. Furthermore, for these 
$\,\bg\nh,\bg\hn'\nnh,\bg\hn''\nnh,L,L\nh'\nnh,L\nh''\nnh,
H,H\nh'\nnh,H\nh''\nnh$,
\begin{equation}\label{cnc}
\mathrm{the\ conclusions \ of\ Lemma~\ref{prdsg}\ hold\ if\ we\ replace\
%the\ letter\
}\,G\,\mathrm{\ with\ }\,\bg,L\,\mathrm{\ or\ }\,H,
\end{equation}
since so do the assumptions of Lemma~\ref{prdsg}, provided that one uses 
Lemma~\ref{subgp} to treat $\,\bg\hn'$ and $\,\bg\hn''$ (or, $\,H\nh'$ and 
$\,H\nh''$) as sub\-groups of $\,\bg\,$ (or, respectively, $\,H$). In fact, 
(\ref{axb}) and the description of $\,K\hn'$ in (\ref{kpr}) show that all 
$\,A\in K\hn'$ (and, among them, the linear parts of all elements of 
$\,\stb\hn'\nh=\bg\hn'$) leave invariant both $\,\tvs\hn'$ and 
$\,\tvs\hn''\nnh$, and act via the identity on the latter. (We have the 
obvious isomorphic identifications of $\,\tvs\nnh/\hn\tvs\hn'$ with 
$\,\tvs\hn''$ on the one hand, and with the orthogonal complement of 
$\,\tvs\hn'$ in $\,\tvs\hs$ on the other, while such $\,A\,$ descend to the 
identity auto\-mor\-phism of $\,\tvs\nnh/\hn\tvs\hn'\nnh$.) The same is 
clearly the case if one switches the primed symbols with the double-prim\-ed 
ones, while elements of $\,\stb\hn'$ now commute with those of $\,\stb\hn''$ 
in view of (\ref{axb}). This yields (\ref{cnc}) and, consequently, allows us
to form
\begin{equation}\label{qgr}
\mathrm{the\ quotient\ groups\ }\,\hat\bg\nh
=\bg/(\bg\hn'\hh\bg\hn'')\hh,\hskip8pt\hat L
=L/(L\nh'\nnh L\nh'')\hh,\hskip8pt\hat H\nh=H/(H\nh'\nnh H\nh'')\hh.
\end{equation}
Finally, let $\,\mathrm{pr}:\mathcal{E}\to\mathcal{M}=\mathcal{E}/\hh\bg\,$ 
and 
$\,\mathcal{M}\hn'\nnh,\mathcal{M}\hn''\nnh,\mathcal{T}\hh'\nnh,
\mathcal{T}\hh''$ denote, respectively, the covering projection of 
Theorem~\ref{restr}(i), the $\,\mathrm{pr}$-im\-age $\,\mathcal{M}\hn'$ of 
$\,\mathcal{E}\nh'$ (or, $\,\mathcal{M}\hn''$ of $\,\mathcal{E}\nh''$), and the tori 
$\,\mathcal{E}\nh'\nnh/\nh L\nh'$ and $\,\mathcal{E}\nh''\nnh/\nh L\nh''\nnh$, 
contained in the torus $\,\mathcal{T}\nh=\hs\mathcal{E}/\nh L\,$ of 
(\ref{cmp}). Note that $\,\mathcal{M}\hn'$ and $\,\mathcal{M}\hn''$ are 
(compact) leaves of the parallel distributions arising, due to 
Theorem~\ref{restr}(i), on $\,\mathcal{M}=\mathcal{E}/\hh\bg\hh$, which 
itself is a compact flat Riemannian manifold (Remark~\ref{frpdc}).

For a ho\-mol\-o\-gy interpretation of parts (a) and (c) below, see
Theorem~\ref{orien}.
\begin{theorem}\label{intrs}
Under the above hypotheses, the following conclusions hold.
\begin{enumerate}
  \def\theenumi{{\rm\alph{enumi}}}
\item $\mathcal{M}\hn'\cap\mathcal{M}\hn''\nnh$, or\/ 
$\,\mathcal{T}\hh'\nnh\cap\mathcal{T}\hh''\nnh$, is a finite subset of\/ 
$\,\mathcal{M}$, or $\,\mathcal{T}\nnh$, and stands in a bijective 
correspondence with the quotient group\/ $\,\hat \bg\,$ or, 
respectively, $\,\hat L$, of\/ {\rm(\ref{qgr})},
\item The projection\/ $\,\mathcal{T}\nnh\to\mathcal{M}\,$ in\/ 
  {\rm(\ref{cmp})} maps\/ $\,\mathcal{T}\hh'\nnh\cap\mathcal{T}\hh''$ 
injectively into\/ $\,\mathcal{M}\hn'\cap\mathcal{M}\hn''\nnh$.
\item The cardinality\/ $\,|\mathcal{M}\hn'\cap\mathcal{M}\hn''\hn|\,$ of\/ 
$\,\mathcal{M}\hn'\cap\mathcal{M}\hn''$ equals\/ 
$\,|\mathcal{T}\hh'\nnh\cap\mathcal{T}\hh''\hn|\,$ times\/ $\,|\hat H|$.
\item The claim about\/ $\,\mathcal{T}\hh'\nnh\cap\mathcal{T}\hh''$ in\/
{\rm(a)} remains true whether or not\/
$\,\mathcal{E}\nh'\nnh,\mathcal{E}\nh''$ are generic.
\item The two bijective correspondences in\/ {\rm(a)} may be chosen so that, 
under the resulting identifications, the injective mapping\/ 
$\,\mathrm{pr}:\mathcal{T}\hh'\nnh\cap\mathcal{T}\hh''\nh
\to\mathcal{M}\hn'\cap\mathcal{M}\hn''$ of\/ {\rm(b)} coincides with the 
group homo\-mor\-phism\/ 
$\,\hat L\to\hat \bg\,$ induced by the inclusion\/ $\,L\to\bg$.
\end{enumerate}
\end{theorem}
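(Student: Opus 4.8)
The plan is to fix the unique point $\,o=\mathcal{E}\nh'\cap\mathcal{E}\nh''$, which is a single point by {\rm(\ref{opt})} since $\,\tvs\hn'\nnh,\tvs\hn''$ are complementary, and to use it as an origin identifying $\,\mathcal{E}\,$ with $\,\tvs$, so that $\,\mathcal{E}\nh'\nh=\tvs\hn'$ and $\,\mathcal{E}\nh''\nh=\tvs\hn''\nnh$. The key geometric observation is that for every $\,\gamma\in\bg\,$ the image $\,\gamma(\mathcal{E}\nh')\,$ is again a coset of $\,\tvs\hn'$ (its linear part lies in $\,H\,$ and $\,\tvs\hn'$ is $\,H\nh$-in\-var\-i\-ant), so by {\rm(\ref{opt})} the intersection $\,\gamma(\mathcal{E}\nh')\cap\mathcal{E}\nh''$ is a single point $\,q\nnh_\gamma\w$. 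Setting $\,\Phi(\gamma)=\mathrm{pr}(q\nnh_\gamma\w)\,$ defines a map $\,\Phi:\bg\to\mathcal{M}\hn'\cap\mathcal{M}\hn''\nnh$, and the same recipe with $\,\bg\,$ replaced by $\,L\,$ and $\,\mathrm{pr}\,$ by $\,\mathcal{E}\to\mathcal{T}\,$ defines $\,\Phi\nnh_{\mathcal{T}}\w:L\to\mathcal{T}\hh'\cap\mathcal{T}\hh''\nnh$. A short check shows both are surjective: any point of $\,\mathcal{M}\hn'\cap\mathcal{M}\hn''$ has a lift in $\,\mathcal{E}\nh''$ whose $\,\bg$-orbit also meets $\,\mathcal{E}\nh'\nnh$, and the connecting group element puts that lift into some $\,\gamma(\mathcal{E}\nh')\cap\mathcal{E}\nh''\nnh$.

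The core step is to compute the fibres of $\,\Phi$, showing $\,\Phi(\gamma\nnh_1\w)=\Phi(\gamma\nnh_2\w)\,$ iff $\,\gamma\nnh_1^{-\nh1}\gamma\nnh_2\w\in\bg\hn'\bg\hn''\nnh$, so that $\,\Phi\,$ descends to a bijection $\,\hat\bg\to\mathcal{M}\hn'\cap\mathcal{M}\hn''\nnh$. For the ``only if'' direction, an element $\,\delta\,$ with $\,\delta(q\nnh_{\gamma\nnh_1}\w)=q\nnh_{\gamma\nnh_2}\w$ carries the $\,\tvs\hn''$-coset $\,\mathcal{E}\nh''$ onto a $\,\tvs\hn''$-coset meeting $\,\mathcal{E}\nh''\nnh$, hence onto $\,\mathcal{E}\nh''$ itself, so $\,\delta\,$ lies in the stabilizer $\,\bg\hn''$ of $\,\mathcal{E}\nh''\nnh$; comparing the $\,\tvs\hn'$-cosets $\,\delta\gamma\nnh_1\w(\mathcal{E}\nh')\,$ and $\,\gamma\nnh_2\w(\mathcal{E}\nh')$, which share $\,q\nnh_{\gamma\nnh_2}\w$ and are therefore equal, gives $\,\gamma\nnh_2^{-\nh1}\delta\gamma\nnh_1\w\in\stb\hn'\nh=\bg\hn'\nnh$, and normality of $\,\bg\hn''$ yields $\,\gamma\nnh_1^{-\nh1}\gamma\nnh_2\w\in\bg\hn''\bg\hn'\nh=\bg\hn'\bg\hn''\nnh$. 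For the ``if'' direction, writing $\,\gamma\nnh_2\w=\gamma\nnh_1\w\beta'\beta''$ with $\,\beta'\nh\in\bg\hn'\nnh,\,\beta''\nh\in\bg\hn''\nnh$, one uses that $\,\beta'$ fixes $\,\mathcal{E}\nh'$ and that $\,\delta=\gamma\nnh_1\w\beta''\gamma\nnh_1^{-\nh1}\nh\in\bg\hn''$ (normality) satisfies $\,\delta(q\nnh_{\gamma\nnh_1}\w)=q\nnh_{\gamma\nnh_2}\w$, whence $\,\mathrm{pr}(q\nnh_{\gamma\nnh_1}\w)=\mathrm{pr}(q\nnh_{\gamma\nnh_2}\w)$. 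Here {\rm(\ref{cnc})} and Lemma~\ref{prdsg}, with Theorem~\ref{gnric}(iv)\hs--\hs(v) and Lemma~\ref{subgp}, supply the normality, commutativity, and trivial intersection of $\,\bg\hn'\nnh,\bg\hn''\nnh$. The identical but purely lattice-theoretic computation, in which $\,q\nnh_\ell\w$ is the $\,\tvs\hn''$-component of $\,\ell\in L$, shows $\,\Phi\nnh_{\mathcal{T}}\w$ descends to a bijection $\,\hat L\to\mathcal{T}\hh'\cap\mathcal{T}\hh''\nnh$; as this argument never invokes genericity, it establishes both the $\,\mathcal{T}$-part of (a) and claim (d).

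To finish, I will splice the three quotients into a short exact sequence $\,\hat L\to\hat\bg\to\hat H$. The lin\-e\-ar-part homo\-mor\-phism $\,\lp\,$ carries $\,\bg\hn'\bg\hn''$ onto $\,H\nh'H\nh''$ and kills $\,L\nh'L\nh''\nnh$, so it descends to a surjection $\,\hat\bg\to\hat H$; its kernel is the image of $\,L\,$ in $\,\hat\bg$, and the identity $\,L\cap\bg\hn'\bg\hn''\nh=L\nh'L\nh''$ --- proved by decomposing a translation $\,\ell=\beta'\beta''\nh\in\bg\hn'\bg\hn''$ and using $\,H\nh'\cap H\nh''\nh=\{1\}\,$ to force $\,\lp(\beta')=\lp(\beta'')=1$, hence $\,\beta'\nh\in L\nh'\nnh,\,\beta''\nh\in L\nh''$ --- identifies that kernel with $\,\hat L$. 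Finiteness of $\,\hat L$ (the full-rank sublattice $\,L\nh'\nnh\oplus L\nh''$ has rank $\,n\,$ in $\,L$) and of $\,\hat H$ (a quotient of the finite group $\,H$) then forces all three intersections to be finite, completing (a), while the resulting multiplicativity $\,|\hat\bg|=|\hat L|\,|\hat H|\,$ is exactly (c). Finally, since $\,\mathrm{pr}\,$ factors as $\,\mathcal{E}\to\mathcal{T}\to\mathcal{M}\,$ in {\rm(\ref{cmp})}, the projection $\,\mathcal{T}\to\mathcal{M}\,$ satisfies $\,p\circ\Phi\nnh_{\mathcal{T}}\w=\Phi|\nnh_L\w\,$ tautologically, so under the two bijections it becomes the inclusion-induced map $\,\hat L\to\hat\bg$; this is (e), and the injectivity of $\,\hat L\hookrightarrow\hat\bg\,$ as the kernel of $\,\hat\bg\to\hat H\,$ yields (b) at once. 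I expect the fibre computation for $\,\Phi\,$ to be the main obstacle: keeping straight which coset each group element stabilizes, and repeatedly exploiting $\,H\nh$-in\-var\-i\-ance through the principle that two parallel cosets which meet must coincide, is where the care lies, everything else being bookkeeping around the short exact sequence.
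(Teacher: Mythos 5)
Your proposal is correct and follows essentially the same route as the paper: the map $\gamma\mapsto\mathrm{pr}\bigl(\gamma(\mathcal{E}\nh')\cap\mathcal{E}\nh''\bigr)$ (the paper uses the symmetric variant $\mathcal{E}\nh'\cap\gamma(\mathcal{E}\nh'')$), surjectivity, fibres equal to cosets of $\bg\hn'\bg\hn''$ via the principle that parallel cosets which meet coincide, the lattice case for the torus and for (d), and the short exact sequence $\hat L\to\hat\bg\to\hat H$ for (b), (c), (e). The only cosmetic difference is that you obtain finiteness algebraically (finite index of $L\nh'L\nh''$ in $L$ and finiteness of $H$) where the paper notes that $\mathcal{M}\hn'\cap\mathcal{M}\hn''$ is a compact zero-dimensional totally geodesic submanifold; both are valid.
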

\begin{proof}We first prove (a) for 
$\,\mathcal{M}\hn'\cap\mathcal{M}\hn''\nnh$. The finiteness of 
$\,\mathcal{M}\hn'\cap\mathcal{M}\hn''$ follows as 
$\,\mathcal{M}\hn'\nnh,\mathcal{M}\hn''\nnh$, and hence also 
$\,\mathcal{M}\hn'\cap\mathcal{M}\hn''\nnh$, are compact totally geodesic 
sub\-man\-i\-folds of $\,\mathcal{M}$, while 
$\,\mathcal{M}\hn'\cap\mathcal{M}\hn''\nnh$, nonempty by (\ref{opt}), has 
$\,\dim(\mathcal{M}\hn'\cap\mathcal{M}\hn'')=0\,$ due to (\ref{cpl}). The 
mapping $\,\varPsi:\bg\to\mathcal{M}\hn'\cap\mathcal{M}\hn''$ with 
$\,\mathrm{pr}(\mathcal{E}\nh'\nh\cap\gamma(\mathcal{E}\nh''))
=\{\varPsi(\gamma)\}\,$ is 
well defined in view of (\ref{opt}) applied to $\,\gamma(\mathcal{E}\nh'')\,$ 
rather than $\,\mathcal{E}\nh''\nnh$, and clearly takes values in both 
$\,\mathcal{M}\hn'\nh=\mathrm{pr}(\mathcal{E}\nh')$ and 
$\,\mathcal{M}\hn''\nh=\mathrm{pr}(\mathcal{E}\nh'')
=\mathrm{pr}(\gamma(\mathcal{E}\nh''))$. Surjectivity of $\,\varPsi\hs$ follows: 
if $\,\mathrm{pr}(x'')\in\mathcal{M}\hn'\cap\mathcal{M}\hn''\nnh$, where 
$\,x''\nh\in\mathcal{E}\nh''$ then, obviously, 
$\,\mathrm{pr}(x'')=\mathrm{pr}(x')\,$ and $\,x'\nh=\gamma(x'')\,$ for some 
$\,x'\nh\in\mathcal{E}\nh'$ and $\,\gamma\in\bg\hh$, so that 
$\,x'\nh\in\mathcal{E}\nh'\nh\cap\gamma(\mathcal{E}\nh'')\,$ and 
$\,\mathrm{pr}(x'')=\mathrm{pr}(x')\,$ equals $\,\varPsi(\gamma)$, the unique 
element of $\,\mathrm{pr}(\mathcal{E}\nh'\nh\cap\gamma(\mathcal{E}\nh''))$. 
Furthermore, $\,\varPsi$-pre\-im\-ages of elements of 
$\,\mathcal{M}\hn'\cap\mathcal{M}\hn''$ are precisely the cosets of the normal 
sub\-group $\,\bg\hn'\hh\bg\hn''\nnh$ of $\,\bg\,$ (which clearly implies (a) 
for $\,\mathcal{M}\hn'\cap\mathcal{M}\hn''$). Namely, the left and right 
cosets coincide, and so elements $\,\gamma\nh_1\w,\gamma\nh_2\w$ of 
$\,\bg\,$ lie in the same coset of $\,\bg\hn'\hh\bg\hn''$ if and only if 
\begin{equation}\label{smc}
\gamma'\nh\circ\gamma\nh_1\w=\gamma\nh_2\w\circ\gamma''\mathrm{\ \ for\ 
some\ }\,\gamma'\nh\in\bg\hn'\mathrm{\ and\ }\,\gamma''\nh\in\bg\hn''\nh.
\end{equation}
Now let $\,\gamma\nh_1\w,\gamma\nh_2\w$ lie in the same coset of 
$\,\bg\hn'\hh\bg\hn''\nnh$. For $\,\gamma'\nh,\gamma''$ with (\ref{smc}), 
$\,\gamma'(\mathcal{E}\nh')=\mathcal{E}\nh'$ and 
$\,\gamma''(\mathcal{E}\nh'')=\mathcal{E}\nh''$ by the definition (\ref{stb}) of 
$\,\stb\hn'\nnh,\stb\hn''$ and their identification with 
$\,\bg\hn'\nnh,\bg\hn''$ (see above). Thus, 
$\,\{\varPsi(\gamma\nh_1\w)\}
=\mathrm{pr}(\mathcal{E}\nh'\nh\cap\gamma\nh_1\w(\mathcal{E}\nh''))
=\mathrm{pr}(\gamma'(\mathcal{E}\nh'\nh\cap\gamma\nh_1\w(\mathcal{E}\nh'')))
=\mathrm{pr}(\gamma'(\mathcal{E}\nh')\cap\gamma'(\gamma\nh_1\w(\mathcal{E}\nh'')))
=\mathrm{pr}(\mathcal{E}\nh'\nh\cap\gamma'(\gamma\nh_1\w(\mathcal{E}\nh'')))
=\mathrm{pr}(\mathcal{E}\nh'\nh\cap\gamma\nh_2\w(\gamma''(\mathcal{E}\nh'')))$, and so $\,\{\varPsi(\gamma\nh_1\w)\}
=\mathrm{pr}(\mathcal{E}\nh'\nh\cap\gamma\nh_2\w(\mathcal{E}\nh''))
=\{\varPsi(\gamma\nh_2\w)\}$. Conversely, if 
$\,\gamma\nh_1\w,\gamma\nh_2\w\in\bg\,$ and 
$\,\varPsi(\gamma\nh_1\w)=\varPsi(\gamma\nh_2\w)$, the unique points 
$\,x\hn_1\w$ of $\,\mathcal{E}\nh'\nh\cap\gamma\nh_1\w(\mathcal{E}\nh'')\,$ 
and $\,x\hn_2\w$ of $\,\mathcal{E}\nh'\nh\cap\gamma\nh_2\w(\mathcal{E}\nh'')$ 
both lie in the same $\,\bg$-or\-bit, and hence 
$\,x\hn_2\w=\gamma(x\hn_1\w)\,$ with some $\,\gamma\in\bg$. For 
$\,\gamma'\nh=\gamma$ and 
$\,\gamma''\nh=\gamma\nh_2^{-\nh1}\nh\circ\gamma\circ\gamma\nh_1\w$, the image 
$\,\gamma'(\mathcal{E}\nh')\,$ (or, $\,\gamma''(\mathcal{E}\nh'')$) intersects 
$\,\mathcal{E}\nh'$ (or, $\,\mathcal{E}\nh''$), the common point being 
$\,x\hn_2\w=\gamma(x\hn_1\w)$ or, respectively, 
$\,\gamma\nh_2^{-\nh1}(x\hn_2\w)=\gamma\nh_2^{-\nh1}(\gamma(x\hn_1\w))$. From 
(\ref{iff}) we thus obtain $\,\gamma'\nh\in\stb\hn'\nh=\bg\hn'$ and 
$\,\gamma''\nh\in\stb\hn''\nh=\bg\hn''\nnh$, which yields (\ref{smc}).

Now (a) for $\,\mathcal{T}\hh'\nnh\cap\mathcal{T}\hh''\nnh$, and (d), 
follow as special cases; see the end of Sect.~\ref{gg}.

Except for the word `injective' the claim made in (e) is immediate if one uses 
the mapping $\,\varPsi:\bg\to\mathcal{M}\hn'\cap\mathcal{M}\hn''$ defined 
above and its analog $\,L\to\mathcal{T}\hh'\nnh\cap\mathcal{T}\hh''$ obtained by 
replacing $\,\bg\nh,\mathcal{M}\hn'\nnh,\mathcal{M}\hn''$ and 
$\,\mathrm{pr}\,$ with $\,L,\mathcal{T}\hh'\nnh,\mathcal{T}\hh''$ and the 
projection $\,\mathcal{E}\to\mathcal{T}\nh=\hs\mathcal{E}/\nh L$. This yields 
(b), injectivity of the homo\-mor\-phism 
$\,\hat L\to\hat \bg\,$ being immediate: if an element of $\,L\,$ lies in 
$\,\bg\hn'\hh\bg\hn''$ (and hence has the form 
$\,\gamma'\nh\circ\gamma''\nnh$, where 
$\,(\gamma'\nnh,\gamma'')\in\bg\hn'\nh\times\bg\hn''$), (\ref{axb}) implies 
that $\,\gamma'\nnh,\gamma''$ are translations with 
$\,\gamma'\nh\in L\nh'\nh=L\cap\tvs\hn'$ and 
$\,\gamma''\nh\in L\nh''\nh=L\cap\tvs\hn''$ (see the lines preceding
(\ref{cnc})); in other words, $\,\gamma'\nh\circ\gamma''$ represents zero in 
$\,\hat L$.

Finally, $\,\hat L\,$ identified as above with a sub\-group of $\,\hat \bg\,$ 
is the kernel of the clear\-ly-sur\-jec\-tive homo\-mor\-phism 
$\,\hat\bg\to\hat H\nh$, induced by $\,\bg\to H\,$ in (\ref{exa}) (which, 
due to (e), proves (c)). Namely, $\,\hat L\,$ contains the kernel (the 
other inclusion being obvious): if the linear part of $\,\gamma\in\bg\,$ lies 
in $\,H\nh'\nnh H\nh''\nnh$, and so equals the linear part of 
$\,\gamma'\nh\circ\gamma''$ for some 
$\,(\gamma'\nnh,\gamma'')\in\bg\hn'\nh\times\bg\hn''\nnh$, then 
$\,\gamma=\lambda\circ\gamma'\nh\circ\gamma''\nnh$, where $\,\lambda\in L$.
%, as required.
\end{proof}

\section{Leaves and integral ho\-mol\-o\-gy}\label{ih}
This section once again employs the assumptions and notation of 
Theorem~\ref{restr}, with $\,\dim\tvs\nh=n\,$ and $\,\dim\tvs\hn'\nh=k$, where 
$\,0<k<n$. As the holonomy group 
$\,H\subseteq\mathrm{Iso}\hs\tvs\cong\mathrm{O}\hs(n)$ is finite 
(Remark~\ref{frpdc}), $\,\mathrm{det}\hh(H)\subseteq\{1,-\nh1\}$. 
%and leaves the lattice $\,L\,$ invariant (Remark~\ref{cnjug}), 
In other words, the elements of $\,H\,$ have the determinants $\,\pm\nh1$. 
Using the covering projection 
$\,\mathcal{T}\nnh\to\mathcal{M}=\mathcal{T}\nnh/\nh H\nh$, cf.\ 
(\ref{cmp}) and the line following it, we see that
\begin{equation}\label{ori}
\mathrm{the\ condition\ }\,\mathrm{det}\hh(H)=\{1\}\,\mathrm{\ amounts\ to\
o\-ri\-ent\-a\-bil\-i\-ty\ of\ }\,\mathcal{M}\hh.
\end{equation}
By Theorem~\ref{gnric}(iii), the generic leaves %$\,\mathcal{M}\hn'$ 
of $\,F\hskip-3.8pt_{\mathcal{M}}\w$, defined as in the line following 
(\ref{krh}), are either all o\-ri\-ent\-a\-ble or all non\-o\-ri\-ent\-a\-ble.
\begin{theorem}\label{orien}Let\/ $\,\mathcal{M}\,$ be 
o\-ri\-ent\-a\-ble. Then all the generic leaves\/ $\,\mathcal{M}\hn'$ of\/ 
$\,F\hskip-3.8pt_{\mathcal{M}}\w\hs$ may be o\-ri\-ent\-ed so as to represent 
the same nonzero\/ $\,k$-di\-men\-sion\-al real ho\-mol\-o\-gy class\/ 
$\,[\mathcal{M}\hn']\in H\nnh_k\w(\mathcal{M},\bbR)$, 
%, %integral which is nonzero even after passing to real coefficients.
%ho\-mol\-o\-gy with real coefficients
while the first two cardinalities in Theorem\/~{\rm\ref{intrs}(c)} equal the
intersection numbers of the real ho\-mol\-o\-gy classes 
$\,[\mathcal{M}\hn'],[\mathcal{M}\hn'']$, or 
$\,[\mathcal{T}\hh'],[\mathcal{T}\hh'']$.
\end{theorem}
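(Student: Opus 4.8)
The plan is to fix compatible orientations once and for all, deduce the common-class statement by pushing forward from the torus $\,\mathcal{T}$, and obtain the intersection-number assertion by a direct transversality count in which every intersection point is shown to carry the sign $\,+1$.

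First I would orient everything coherently. Choose orientations $\,\omega'$ of $\,\tvs'$ and $\,\omega''$ of $\,\tvs''\nnh$, and orient $\,\tvs\,$ -- hence, via (\ref{ori}), $\,\mathcal{M}\,$ -- by $\,\omega'\wedge\omega''\nnh$. This is legitimate because each $\,A\in H\,$ preserves both $\,\tvs'$ and $\,\tvs''$ and, $\,\mathcal{M}\,$ being orientable, has $\,\det A=1$, so it preserves $\,\omega'\wedge\omega''\nnh$. The generic leaves are themselves orientable: for $\,A\,$ in the group $\,K'$ of (\ref{kpr}) one has $\,A=\mathrm{id}\,$ on the orthogonal complement of $\,\tvs'\nnh$, whence $\,\det(A|_{\tvs'})=\det A=1$; since by Theorem~\ref{gnric}(v) the holonomy group $\,H'$ of $\,\mathcal{M}'$ is the restriction of $\,K'$ to $\,\tvs'\nnh$, the orientation $\,\omega'$ descends to a well-defined orientation of every generic leaf $\,\mathcal{M}'$ (and of $\,\mathcal{T}'$), compatibly with the local isometries $\,\mathrm{pr}:\mathcal{E}'\to\mathcal{M}'$ and with the covering $\,\mathcal{T}'\to\mathcal{M}'$ of Lemma~\ref{subgp}. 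The analogous statements hold for $\,\mathcal{M}''$ via $\,\omega''\nnh$.

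Next I would establish that all generic leaves represent one class. All leaves $\,\mathcal{T}'$ of $\,F\hskip-3pt_{\mathcal{T}}\w$ are mutual translates of the subtorus $\,\tvs'\nnh/\nh L'\nnh$, hence represent a single class in $\,H\nnh_k\w(\mathcal{T},\bbR)$. Applying $\,\pi_*$ for the covering $\,\pi:\mathcal{T}\to\mathcal{M}\,$ of (\ref{cmp}) to the commutative square formed by the inclusions $\,\mathcal{T}'\hookrightarrow\mathcal{T}$, $\,\mathcal{M}'\hookrightarrow\mathcal{M}\,$ and the degree-$d$ covering $\,\mathcal{T}'\to\mathcal{M}'$, where $\,d=|H'|\,$ is constant over generic leaves by Lemma~\ref{subgp}, yields $\,\pi_*[\mathcal{T}']=d\hs[\mathcal{M}']$. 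As the left-hand side is independent of the generic leaf and $\,d\neq0\,$ permits division over $\,\bbR$, all the classes $\,[\mathcal{M}']\,$ coincide.

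Finally I would treat the intersection numbers geometrically. By (\ref{opt}) and (\ref{cpl}) the oriented submanifolds $\,\mathcal{M}'$ (of dimension $\,k$) and $\,\mathcal{M}''$ (of dimension $\,n-k$) meet exactly in the finite set of Theorem~\ref{intrs}(a), and the meeting is transverse, since $\,T\hskip-2pt_x\mathcal{M}'\oplus T\hskip-2pt_x\mathcal{M}''=\tvs'\oplus\tvs''=\tvs\,$ at each common point; hence $\,[\mathcal{M}']\cdot[\mathcal{M}'']\,$ is the signed count $\,\sum_x\mathrm{sign}(x)$. The crux, and the only delicate point, is that every sign equals $\,+1$: lifting an intersection point to $\,\mathcal{E}$, the oriented tangent configuration is precisely $\,(\tvs',\omega')\oplus(\tvs''\nnh,\omega'')=(\tvs,\omega'\wedge\omega'')$, a positively oriented splitting by our very choice of the orientation of $\,\mathcal{M}$, and this comparison is identical at every lift because the two foliations are parallel and $\,\mathrm{pr}\,$ is orientation-preserving. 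Thus $\,[\mathcal{M}']\cdot[\mathcal{M}'']=|\mathcal{M}'\cap\mathcal{M}''|$, and the verbatim argument on the torus gives $\,[\mathcal{T}']\cdot[\mathcal{T}'']=|\mathcal{T}'\cap\mathcal{T}''|$. Since both intersections are nonempty by (\ref{opt}), these intersection numbers are positive, so $\,[\mathcal{M}']\neq0$, which completes the proof. The main obstacle, as flagged, is the constancy of the intersection signs, which forces the signed intersection number to agree with the unsigned cardinality; the remaining steps are orientation bookkeeping.
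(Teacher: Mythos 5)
Your first two steps reproduce the paper's argument for the ``common class'' assertion: orient $\,\tvs\hn'$ once and for all (legitimate because the linear parts of the generic stabilizer lie in $\,K\hn'$ and hence restrict to $\,\tvs\hn'$ with determinant $\,+1$), note that the leaves $\,\mathcal{T}\hh'$ are translates of one another, and push forward through the degree-$d$ covering. Where you diverge is the rest: the paper obtains $\,[\mathcal{M}\hn']\ne0\,$ by integrating a leafwise parallel volume form, while you try to get both the nonvanishing and the intersection-number identity from a transversality count in which every intersection point is claimed to carry the sign $\,+1$. That sign claim is a genuine gap.

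The lift of $\,\mathcal{M}\hn''$ through a chosen preimage of an intersection point is $\,\gamma(\mathcal{E}\nh'')\,$ for some $\,\gamma\in\bg\hh$, and the orientation it inherits from $\,\mathcal{M}\hn''$ is not $\,\omega''$ but $\,A\hs\omega''\nnh$, where $\,A\,$ is the linear part of $\,\gamma$. Orientability of $\,\mathcal{M}\,$ only gives $\,\det A=1\,$ on all of $\,\tvs\nnh$; the restriction $\,A|_{\tvs''}$ may still reverse $\,\omega''$ (compensated by $\,A|_{\tvs'}$ reversing $\,\omega'$). So the sign at the point labelled by $\,\gamma\,$ is $\,\det(A|_{\tvs''})$, which defines a character of $\,\hat\bg\,$ that need not be trivial; when it is nontrivial the signed count is $\,0$, not $\,|\hat\bg|$. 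Concretely, take $\,n=3\,$ and $\,\bg\,$ generated by $\,\bbZ^3$ and $\,\gamma(x,y,z)=(x+1/2,-y,-z)$, an orientable flat $3$-manifold with $\,H=\{1,\mathrm{diag}(1,-1,-1)\}$, and set $\,\tvs\hn'=\bbR e_2$, $\,\tvs\hn''=\bbR e_1\oplus\bbR e_3$. Every coset of $\,\tvs\hn'$ is generic, and each leaf $\,\mathcal{M}\hn'$ is a circle representing the class of $\,e_2\,$ in $\,H_1(\mathcal{M},\bbZ)\cong\bbZ\oplus\bbZ_2\oplus\bbZ_2$, which is $2$-torsion, so $\,[\mathcal{M}\hn']=0\,$ in real homology; correspondingly $\,|\mathcal{M}\hn'\cap\mathcal{M}\hn''|=2\,$ while the two intersection points carry opposite signs and $\,[\mathcal{M}\hn']\cdot[\mathcal{M}\hn'']=0$. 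Thus the step ``every sign equals $+1$'' fails, and with it your deduction of $\,[\mathcal{M}\hn']\ne0$; it would require the extra hypothesis $\,\det(A|_{\tvs'})=1\,$ for all $\,A\in H$, which does not follow from orientability of $\,\mathcal{M}$. (The same example indicates that the difficulty is not merely with your argument: the paper's own nonvanishing step integrates a form defined only along the leaf, not a closed form on $\,\mathcal{M}$, and does not rule this situation out either.)
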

\begin{proof}A fixed orientation of $\,\tvs\hn'\nnh$, being preserved, due to 
(\ref{kpr}) -- (\ref{axb}) and (\ref{ori}), by the generic stabilizer group 
$\,\stb\hn'\nnh$, gives rise to orientations of all the leaves 
$\,\mathcal{T}\hh'$ of $\,F\hskip-3pt_{\mathcal{T}}\w$ and all the
generic leaves 
$\,\mathcal{M}\hn'$ of $\,F\hskip-3.8pt_{\mathcal{M}}\w$, so as to make 
the covering projections $\,\mathcal{T}\hh'\nnh\to\nh\mathcal{M}\hn'$ in the 
line following (\ref{cpr}) o\-ri\-en\-ta\-tion-pre\-serv\-ing. Since the torus 
group $\,\tvs\nnh/\nh L\,$ acts transitively on the o\-ri\-ent\-ed leaves 
$\,\mathcal{T}\hh'\nnh$, they all represent a single real ho\-mol\-o\-gy class 
$\,[\mathcal{T}\hh']\in H\nnh_k\w(\mathcal{T}\nh,\bbR)$, equal to the image of 
the fundamental class of $\,\mathcal{T}\hh'$ under the inclusion 
$\,\mathcal{T}\hh'\nnh\to\mathcal{T}\nnh$. At the same time, for generic 
leaves $\,\mathcal{M}\hn'\nnh$, the $\,d\hh$-fold covering projection 
$\,\mathcal{T}\hh'\nnh\to\nh\mathcal{M}\hn'$ (where $\,d=|H\nh'|$ does not 
depend on the choice of $\,\mathcal{M}\hn'\nnh$, cf.\ Lemma~\ref{subgp}) 
sends the fundamental class of $\,\mathcal{T}\hh'$ to $\,d\,$ times the 
fundamental class of $\,\mathcal{M}\hn'\nnh$. Thus, by functoriality, 
$\,d[\mathcal{M}\hn']\in H\nnh_k\w(\mathcal{M},\bbR)$ is the image of 
$\,[\mathcal{T}\hh']\in H\nnh_k\w(\mathcal{T}\nh,\bbR)$ under the covering 
projection $\,\mathcal{T}\nnh\to\mathcal{M}$, which makes it the same for all 
the generic leaves $\,\mathcal{M}\hn'\nnh$. Finally,
$\,[\mathcal{M}\hn']\ne0$, 
since a fixed constant positive differential $\,k$-form on the o\-ri\-ent\-ed 
space $\,\tvs\hn'$ descends, in view of the first line of this proof, to a 
parallel positive volume form on each o\-ri\-ent\-ed generic leaf 
$\,\mathcal{M}\hn'$ which yields a positive value when integrated over 
$\,[\mathcal{M}\hn']$.
\end{proof}
Note that the final clause in Theorem~\ref{orien} 
is, not surprisingly, consistent with the fact that -- by
Theorem~\ref{intrs}(a) 
and Lemma~\ref{subgp} -- the intersection numbers depend just on the two
mutually complementary $\,H\nnh$-in\-var\-i\-ant $\,L$-sub\-spaces
$\,\tvs\hn'\nnh,\tvs\hn''$ of 
$\,\tvs\nnh$, and not on the individual generic leaves 
$\,\mathcal{M}\hn'\nnh,\mathcal{M}\hn''\nnh,\mathcal{T}\hh'$ or 
$\,\mathcal{T}\hh''\nnh$.

\section{Generalized Klein bottles}\label{gk}
This section presents some known examples \cite[p.\ 163]{charlap} to 
illustrate our discussion.

The symbols $\,\bbR\nnh^H\nnh,\bbZ\nh^H$ used below follow the
set-the\-o\-ret\-i\-cal notational convention:
$\,Y\nh^X\hs$ is {\it the set of all mappings\/} $\,X\hn\to Y\nh$, not 
the fix\-ed-point set of some group action.

Let $\,S^1$ and $\,r\hskip-3pt_\theta\w:S^1\nh\to S^1$ denote the 
unit circle in $\,\bbC\,$ and the rotation by angle $\,\theta$ 
(multiplication by $\,e^{i\theta}$). For a fixed integer $\,n\ge2\,$ and 
the group $\,H\nh=\bbZ\nh_n\w\subseteq S^1$ of $\,n$th roots of unity, 
$\,\bbZ\nh^H\cong\bbZ^n$ is a lattice in the Euclidean space 
$\,\tvs\nh=\bbR\nnh^H\cong\rn$ with the $\,\ell\hh^2$ inner product, and
$\,\bbZ_0^{\hskip-.4ptH}
=\{\psi\in\bbZ\nh^H:\psi\nnh_{\mathrm{avg}}\w\nh=0\}\,$ is a sub\-group of 
$\,\bbZ\nh^H$ isomorphic to $\,\bbZ^\nmo\nnh$, where 
$\,(\hskip2.7pt)\nh_{\mathrm{avg}}\w$ denotes the averaging functional 
$\,\tvs\nh\to\bbR$. Setting 
$\,\bg=[(\hskip-1.9pt1\nh/\nh n\nh)\bbZ]\times\bbZ_0^{\hskip-.4ptH}$, one easily sees
that the assignment
\begin{equation}\label{tpf}
((t,\psi),f)\,\mapsto\,f\nh\circ r\nnh_{2\pi t}\w\nh+\,t\,+\,\psi\hh,\hskip12pt
\mathrm{where\ }\,\,(t,\psi)\in\bg\hs\mathrm{\ and\
}\,f\in\tvs\nh=\bbR\nnh^H\nnh,
\end{equation}
defines an \af\ isometric action on $\,\tvs\hs$ by $\,\bg\,$ treated as
a group with the group operation 
$\,(t,\psi)(t'\nh,\psi')
=(t\,+\,t',\,\psi'\nnh\circ r\nnh_{2\pi t}\w\nnh+\psi)$.
The term $\,t\,$ in (\ref{tpf}) is the constant function 
$\,t:H\nh\to\bbR$. Note that, in the right-hand side of (\ref{tpf}), as 
$\,(t,\psi)\in\bg\hh$,
\begin{equation}\label{avg}
t_{\mathrm{avg}}\w\hs=\,t\hh,\hskip28pt
\psi\nnh_{\mathrm{avg}}\w\hs=\,0\hh,\hskip28pt
(f\nnh\circ\nnh r\nnh_{2\pi t}\w)\nh_{\mathrm{avg}}\w\hs
=\,f\hskip-2.3pt_{\mathrm{avg}}\w\hh.
\end{equation}
\begin{proposition}\label{biebg}
These\/ $\,H\nh,\tvs\,$ and\/ $\,\bg\hs$ have the following properties.
\begin{enumerate}
  \def\theenumi{{\rm\roman{enumi}}}
\item The action of\/ $\,\bg\,$ on\/ $\,\tvs\,$ is effective and free.
\item $\bg\,$ is a Bie\-ber\-bach group in the underlying Euclidean 
\af\/ $\,n$-space of\/ $\,\tvs\nh$.
\item The holonomy group and lattice sub\-group of\/ $\,\bg\,$ are our\/ 
$\,H\cong\bbZ\nh_n\w$, acting on\/ $\,\tvs\hs$ linearly by\/ 
$\,H\times\tvs\ni(e^{i\theta}\nh,f)
\mapsto f\nh\circ r\hskip-3pt_\theta\w\in\tvs\nnh$, 
and\/ $\,L=\bbZ\times\nh\bbZ_0^{\hskip-.4ptH}\nnh$.
\item \,As a transformation of\/ $\,\tvs\nnh$, each\/ $\,(t,\psi)\in L\,$
equals the translation by\/ $\,t+\psi$.
\item $L\,$ consists of all translations by vectors\/ 
$\,\psi\hn'\in\bbZ\nh^H\,$ such that\/ 
$\,\psi\hn'_{\hskip-1.5pt\mathrm{avg}}\nh\in\bbZ\hh$.
\end{enumerate}
An example of two mutually complementary\/ $\,H\nh$-in\-var\-i\-ant\/ 
$\,L$-sub\-spaces of\/ $\,\tvs\nh$, in the sense of\/ {\rm(\ref{cpl})} and 
Definition\/~{\rm\ref{lsbsp}}, is provided by the line\/ $\,\tvs\hn'$ of 
constant functions $\,H\to\bbR\,$ and the hyper\-plane\/ $\,\tvs\hn''$ 
consisting of all\/ $\,f:H\to\bbR\,$ with\/ 
$\,f\hskip-2.3pt_{\mathrm{avg}}\w=0$. The generic stabilizer groups 
$\,\stb\hn'\nnh,\stb\hn''\nnh\subseteq\bg\,$ associated via\/ 
{\rm(\ref{krh})} with\/ $\,\tvs\hn'$ and\/ $\,\tvs\hn''$ are the translation 
groups\/ $\,\bbZ\times\nnh\{0\}\,$ and\/ 
$\,\{0\}\nnh\times\bbZ_0^{\hskip-.4ptH}\nnh$, both contained in\/ $\,L$.
Furthermore,
\begin{enumerate}
  \def\theenumi{{\rm\alph{enumi}}}
\item under the obvious identifications of\/ 
$\,\tvs\nnh/\hn\tvs\hn'$ with\/ $\,\tvs\hn''$ and\/ 
$\,\tvs\nnh/\hn\tvs\hn''$ with\/ $\,\tvs\hn'\nh$, the quotient actions of\/
$\,\bg\,$ become\/ 
$\,\bg\times\tvs\hn''\ni((t,\psi),f)
\mapsto f\nh\circ r\nnh_{2\pi t}\w\nnh+\psi\in\tvs\hn''$ and, respectively, 
$\,\bg\times\tvs\hn'\ni((t,\psi),f)
\mapsto f\nnh+t\in\tvs\hn'\nnh$,
\item every coset of the\/ $\,L$-sub\-space\/ $\,\tvs\hn''$ is generic, as
  defined in Sect.~{\rm\ref{gg}},
\item non\-ge\-ner\-ic cosets of\/ $\,\tvs\hn'$ are precisely those cosets
containing\/ $\hs f:H\to\bbR\hs$ such that\/ 
$\,f\circ r\nnh_{2\pi t}\w\nh-f\,$ is in\-te\-ger-val\-ued for some\/ 
$\,t\in[(\hskip-1.9pt1\nh/\nh n\nh)\bbZ]\smallsetminus\bbZ\hh$,
\item the obvious homomorphism\/
$\,\bg=[(\nh\nnh1\nh/\nh n\nh)\bbZ]\times\bbZ_0^{\hskip-.4ptH}
\to(\hskip-1.9pt1\nh/\nh n\nh)\bbZ\,$ maps the stabilizer group of each coset of\/ 
$\,\tvs\hn'$ iso\-mor\-phic\-al\-ly onto a sub\-group of\/ 
$\,(\hskip-1.9pt1\nh/\nh n\nh)\bbZ\hh$,
\item the sub\-groups of\/ $\,(\hskip-1.9pt1\nh/\nh n\nh)\bbZ\,$ resulting from\/
{\rm(c)} have the form\/ $\,(\hskip-1.4ptd\nh/\nh n\nh)\bbZ$, where\/ $\,d\,$ is a 
positive divisor of\/ $\,n\,$ or, equivalently, are the pre\-im\-ages,
under the homo\-mor\-phism\/
$\,\bbR\ni t\mapsto e^{2\pi i\hh t}\nnh\in S^1\nh$, of sub\-groups of the
group\/ $\,H\nh=\bbZ\nh_n\w\subseteq S^1$ formed by the\/ $\,n$th roots 
of unity.
\end{enumerate}
\end{proposition}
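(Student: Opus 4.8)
The plan is to obtain the Bieberbach-group claims (i)--(v) by direct computation from (\ref{tpf}) and the averaging identities (\ref{avg}), and then to deduce the complementary-subspace statements from the splitting $\tvs=\tvs\hn'\oplus\tvs\hn''$ of $\tvs\nh=\bbR\nnh^H$ into constant and average-zero functions, on which the twist $r_{2\pi t}$ acts, respectively, trivially and faithfully. For (i), if $(t,\psi)$ fixes some $f$, so that $f\circ r_{2\pi t}+t+\psi=f$, then applying $(\,)_{\mathrm{avg}}$ and (\ref{avg}) yields $f_{\mathrm{avg}}+t=f_{\mathrm{avg}}$, hence $t=0$; now $r_0=\mathrm{id}$ forces $\psi=0$, so the action is free and, a fortiori, effective. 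For (iii)--(v): the linear part of $(t,\psi)$ is $f\mapsto f\circ r_{2\pi t}$, so $\lp(\bg)$ is precisely the stated copy of $H$ (different $n$th roots of unity permute $H$ differently, so the holonomy representation is faithful), while $(t,\psi)$ is a translation iff $r_{2\pi t}=\mathrm{id}$ on $H$, i.e.\ iff $t\in\bbZ$, which gives $L=\bbZ\times\bbZ_0^H$; for such elements $f\circ r_{2\pi t}=f$, so they translate by $t+\psi$ (this is (iv)), and (v) merely records that $t+\psi$ runs over exactly the $\psi'\in\bbZ^H$ with $\psi'_{\mathrm{avg}}=t\in\bbZ$. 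Then (ii) follows at once: $L$ is a full lattice (rank $1+(n-1)=n$, its translation vectors spanning $\tvs$), $\bg/L\cong H$ is finite, so $\bg$ is discrete with the compact fundamental domain of $L$, and freeness forbids torsion because a finite-order isometry would fix the centroid of an orbit.

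For the subspace part I would first check that $\tvs\hn'$ and $\tvs\hn''$ are complementary ($f=f_{\mathrm{avg}}+(f-f_{\mathrm{avg}})$ with $\tvs\hn'\cap\tvs\hn''=\{0\}$), $H$-invariant (as $H$ fixes constants and preserves averages), and $L$-subspaces ($L\cap\tvs\hn'$ is the group of integer constants, spanning $\tvs\hn'$, while $L\cap\tvs\hn''=\bbZ_0^H$ spans $\tvs\hn''$). Projecting (\ref{tpf}) through the two quotient maps gives (a): on $\tvs/\tvs\hn'\cong\tvs\hn''$ the constant $t$ is killed, leaving $f\mapsto f\circ r_{2\pi t}+\psi$, and on $\tvs/\tvs\hn''\cong\tvs\hn'$ only the average survives, leaving $f\mapsto f+t$. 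The generic stabilizer groups are the kernels (\ref{krh}) of these two actions: $f\mapsto f+t$ is trivial iff $t=0$, so $\stb\hn''=\{0\}\times\bbZ_0^H$; and $f\mapsto f\circ r_{2\pi t}+\psi$ is trivial iff $\psi=0$ and $r_{2\pi t}|_{\tvs\hn''}=\mathrm{id}$, the latter being equivalent to $r_{2\pi t}=\mathrm{id}$ (since $r_{2\pi t}$ already fixes $\tvs\hn'$) hence to $t\in\bbZ$, so $\stb\hn'=\bbZ\times\{0\}$; both sit inside $L$. Because the action on $\tvs/\tvs\hn''$ is the pure translation $f\mapsto f+t$, every coset of $\tvs\hn''$ has stabilizer exactly $\stb\hn''$, which is (b).

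Parts (c)--(e) all come from the stabilizer of a coset of $\tvs\hn'$, represented by $f\in\tvs\hn''$: by (a) it is $\{(t,\psi):\psi=f-f\circ r_{2\pi t}\}$, so an element with first coordinate $t$ exists exactly when $f-f\circ r_{2\pi t}\in\bbZ_0^H$, i.e.\ (the average being automatically zero) when $f\circ r_{2\pi t}-f$ is integer-valued -- a condition unaffected by adding a constant to $f$, hence well defined on cosets. Non-genericity means the stabilizer exceeds $\stb\hn'=\bbZ\times\{0\}$, i.e.\ the integrality holds for some $t\notin\bbZ$; this is (c). Since $\psi$ is determined by $t$, the projection $(t,\psi)\mapsto t$ is injective on the stabilizer and maps it isomorphically onto a subgroup of $(1/n)\bbZ$, which is (d); this image always contains the image $\bbZ$ of $\stb\hn'$, and every subgroup of $(1/n)\bbZ$ containing $\bbZ$ equals $(d/n)\bbZ$ for a unique divisor $d\mid n$, corresponding under $t\mapsto e^{2\pi i t}$ to the subgroups of $H=\bbZ_n\cong(1/n)\bbZ/\bbZ$, which is (e).

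I expect the genuine content -- and the main obstacle -- to lie in the asymmetry exposed by (c) versus (b): although $\tvs\hn'$ and $\tvs\hn''$ play symmetric algebraic roles, the twist $r_{2\pi t}$ is trivial on $\tvs\hn'$ but faithful on $\tvs\hn''$, so the $\tvs\hn''$-quotient sees a pure translation (all cosets generic) whereas the $\tvs\hn'$-quotient retains the rotation, and a coset becomes non-generic exactly when some nontrivial rotation $r_{2\pi t}$ moves $f$ by an integer vector. The delicate point is thus the precise transcription of the lattice-membership condition $\psi\in\bbZ_0^H$ into the integrality of $f\circ r_{2\pi t}-f$ in (c), together with checking that this leaves no generic coset misclassified; everything else reduces to bookkeeping over the fixed splitting $\tvs=\tvs\hn'\oplus\tvs\hn''$.
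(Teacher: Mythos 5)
Your proposal is correct and follows essentially the same route as the paper: direct computation from (\ref{tpf}) and (\ref{avg}) for (i)--(v), then identification of the quotient actions on $\tvs\hn''$ and $\tvs\hn'$, whose kernels and coset stabilizers yield the remaining claims exactly as in the paper's proof. The only (harmless) deviations are in two standard auxiliary steps -- you rule out torsion via the fixed centroid of an orbit and get discreteness from $L$ being a discrete subgroup of finite index, where the paper uses the homomorphism $(t,\psi)\mapsto t$ and an explicit convergent-sequence argument -- both of which are valid.
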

\begin{proof}First, $\,\bg\,$ acts on $\,\tvs\hs$ freely: if 
$\,f\nh\circ r\nnh_{2\pi t}\w\nh+t+\psi=f\nh$, cf.\ (\ref{tpf}), with 
$\,f:H\to\bbR$, applying $\,(\hskip2.7pt)\nh_{\mathrm{avg}}\w$ to both sides, 
we get, by (\ref{avg}), $\,t=0$, and hence 
$\,f\nnh\circ\nh r\nnh_{2\pi t}\w\nh=f\nh$, so that the equality 
$\,f\nnh\circ\nh r\nnh_{2\pi t}\w\nh+t+\psi=f\,$ reads $\,\psi=0$. Secondly, 
$\,H\,$ and $\,L\,$ {\it defined\/} by (iii) arise from $\,\bg\,$ as required
in (\ref{lah}): the claim about $\,H\,$ is obvious, and so are (iv) -- (v), 
showing that $\,L\subseteq\bg\hn\cap\nnh\tvs\nnh$. Conversely, 
$\,\bg\hn\cap\tvs\subseteq\hn L$. To verify this, suppose that 
$\,f\nh\circ r\nnh_{2\pi t}\w\nh+t+\psi=f\nh+\psi\hn'$ for all 
$\,f\in\tvs\nh=\bbR\nnh^H\nnh$, some 
$\,(t,\psi)\in\bg\hh$, and some $\,\psi\hn'\nh\in\tvs\nnh$. Taking the 
linear parts of both sides, we see that $\,t\in\bbZ\,$ and $\,(t,\psi)\in L$, 
as required.

Our $\,\bg\,$ has a compact fundamental domain in $\,\tvs\nnh$, since so does 
the lattice $\,L\subseteq\bg$. Also, $\,\bg\,$ is  
tor\-sion-free: $\,\bg\ni(t,\psi)\mapsto t\in\bbR\,$ being a group 
homo\-mor\-phism, any fi\-nite\hh-or\-der element $\,(t,\psi)\,$ of $\,\bg\,$ 
has $\,t=0$, and so, by (\ref{tpf}), it acts via translation by $\,\psi$, 
which gives $\,\psi=0$. Next, to establish the discreteness of the subset 
$\,\bg\,$ of $\,\mathrm{Iso}\hs\tvs\hs$ (and, consequently, (ii)), suppose 
that a sequence $\,(t\nh_k\w,\psi\hskip-2pt_k\w)\in\bg\,$ with pairwise
distinct terms yields, via (\ref{tpf}), a sequence convergent in 
$\,\mathrm{Iso}\hs\tvs\nnh$. Evaluating (\ref{tpf}) on $\,f\nh=0$, we get 
$\,(t\nh_k\w,\psi\hskip-2pt_k\w)\to(t,\psi)\,$ in $\,\bbR\times\bbR\nnh^H$ as 
$\,k\to\infty$, for some $\,(t,\psi)\,$ and, since 
$\,(t\nh_k\w,\psi\hskip-2pt_k\w)
\in[(\hskip-1.9pt1\nh/\nh n\nh)\bbZ]\times\bbZ_0^{\hskip-.4ptH}\nnh$, the sequence 
$\,(t\nh_k\w,\psi\hskip-2pt_k\w)\,$ becomes eventually constant, contrary to
its terms' being pairwise distinct.

As for $\,\tvs\hn'$ and $\,\tvs\hn''\nh$, note that, by (iv), a
$\,\bbZ\nh$-ba\-sis of 
$\,L\cap\tvs\hn'$ (or, $\,L\cap\tvs\hn''$) may be defined to consist just 
of the constant function $\,1\,$ (or, respectively, of the $\,\nmo$ 
functions $\,\psi\nnh_q:H\to\bbZ$, labeled by $\,q\in H\smallsetminus\{1\}$, 
where $\,\psi\nnh_q(q)=1=-\hh\psi\nnh_q(1)$ and $\hs\psi\nnh_q=0\hs$ on 
$\,H\smallsetminus\{1,q\}$). Specifically, 
$\,\psi\hn'\nh=\sum_q\w\psi\hn'\hn(q)\hs\psi\nnh_q$ whenever 
$\,\psi\hh'\in\bbZ\nh^H$ and
$\,\psi\hn'_{\hskip-1.5pt\mathrm{avg}}\nh=0$. Our descriptions of
$\,\stb\hn'$ and $\,\stb\hn''$ are in turn immediate from (a), which itself
is a trivial consequence of (\ref{tpf}) -- (\ref{avg}), and 
easily implies (b) -- (c). Now (d) follows as the relation
$\,f\nh\circ r\nnh_{2\pi t}\w\nnh+\psi=f\nh$, with fixed
$\,f\in\tvs\hn''\nnh$, uniquely determines $\,\psi$, once $\,t$ is given.
Finally, we have (e) since the first assignment in (a) only depends on $\,t$ 
through $\,e^{2\pi i\hh t}\nh$, which completes the proof.
\end{proof}
The compact flat Riemannian manifold $\,\tvs\nnh/\hh\bg\,$ arising here from
our Bie\-ber\-bach group $\,\bg\,$ as in Sect.~\ref{bg} is called the 
$\,n$-di\-men\-sion\-al {\it generalized Klein bottle\/} 
\cite[p.\ 163]{charlap}. The linear functional 
$\,\tvs\ni f\mapsto f\hskip-2.3pt_{\mathrm{avg}}\w\in\bbR\,$ is 
equi\-var\-i\-ant, due to (\ref{avg}), with respect to the actions of 
$\,\bg\,$ and $\,\bbZ$ (the latter, on $\,\bbR$, via translations by 
multiples of $\,\nh1\nh/\nh n$), relative to the homo\-mor\-phism 
$\,\bg\ni(t,\psi)\mapsto t\in(\hskip-1.9pt1\nh/\nh n\nh)\bbZ\hh$. Thus, it descends, in 
view of Remark~\ref{covpr}(c), to a bundle projection 
$\,\tvs\nnh/\hh\bg\to\bbR/[(\hskip-1.9pt1\nh/\nh n\nh)\bbZ]$, making 
$\,\tvs\nnh/\hh\bg\,$ a bundle of tori over the circle. The fibres of this 
bundle are, obviously, the images, under the projection
$\,\mathrm{pr}:\tvs\nh\to\tvs\nnh/\hh\bg\hh$, of cosets of the
$\,L$-sub\-space 
$\,\tvs\hn''\nh\subseteq\tvs\hs$ mentioned in Proposition~\ref{biebg}(b), all
of them generic. On the other hand, $\,\tvs\hn'$ has 
some non\-ge\-ner\-ic cosets -- by Proposition~\ref{biebg}(c), an example is
$\,\tvs\hn'$ itself, with the stabilizer group easily seen to be
$\,[(\hskip-1.9pt1\nh/\nh n\nh)\bbZ]\times\nnh\{0\}$. The
$\,\mathrm{pr}$-im\-ages of
the cosets of $\,\tvs\hn'$ are embedded circles, forming the leaves
of the foliation $\,F\hskip-3.8pt_{\mathcal{M}}\w$ of
$\,\mathcal{M}=\tvs\nnh/\hh\bg\,$ arising as in Theorem~\ref{restr}. 

For the foliation $\,F\hskip-3.8pt_{\mathcal{M}}\w$ of 
$\,\mathcal{M}=\tvs\nnh/\hh\bg\,$ obtained in the general case of
Theorem~\ref{restr}, the stabilizer group of a leaf $\,\mathcal{M}\hn'$ of 
$\,F\hskip-3.8pt_{\mathcal{M}}\w$ is only defined as a conjugacy class of 
sub\-groups of $\,\bg\,$ (the sub\-groups being the stabilizer groups
$\,\stb\hn'$ of leaves $\,\mathcal{E}\nh'$ of $\,F\hskip-3pt_{\mathcal{E}}\w$
with $\,\mathrm{pr}(\mathcal{E}\nh')=\mathcal{M}\hn'$). Genericity of 
$\,\mathcal{M}\hn'\nh$, mentioned in the line following (\ref{krh}), amounts
to genericity of all such $\,\mathcal{E}\nh'\nh$, and $\,\stb\hn'$ is then
uniquely associated with $\,\mathcal{M}\hn'$ (due to its being the generic
stabilizer group, normal in $\,\bg$). In the subsequent discussion
$\,\stb\hn'$ 
is also treated as uniquely defined, for a different reason: each
$\,\stb\hn'$ is replaced by its image under a homo\-mor\-phism from
$\,\bg\,$ into the
Abel\-i\-an group $\,(\hskip-1.9pt1\nh/\nh n\nh)\bbZ$.

We denote by $\,|\,$ the divisibility relation in the set $\,\Delta_n\w$ of
all positive divisors of $\,n\,$ and by $\,\mathcal{M}\,$ 
the $\,n$-di\-men\-sion\-al generalized Klein bottle $\,\tvs\nnh/\hh\bg\hh$. 
Let us also use Proposition~\ref{biebg}(d)\hs-\hs(e) and the final sentence of
the last paragraph to treat
\begin{equation}\label{tsg}
\begin{array}{l}
\mathrm{the\ stabilizer\ groups\ of\ leaves\ of\
}\,F\hskip-3.8pt_{\mathcal{M}}\w\hs\mathrm{\ as\ sub\-groups}\\
\mathrm{of\ }\,(\hskip-1.9pt1\nh/\nh n\nh)\bbZ\,\mathrm{\ having\
the\ form\ }\,(\hskip-1.4ptd\nh/\nh n\nh)\bbZ\hh\mathrm{,\ where\
}\,d\in\Delta_n\w\hn.
\end{array}
\end{equation}
\begin{proposition}\label{nongn}
There exists a family\/ $\,\{\mathcal{M}[d]:d\in\Delta_n\w\}\,$ of
compact connected immersed sub\-man\-i\-folds of\/ $\,\mathcal{M}\,$ with the
following properties, for all\/ $\,d,d'\in\Delta_n\w$.
\begin{enumerate}
  \def\theenumi{{\rm\roman{enumi}}}
\item Each\/ $\,\mathcal{M}[d]\,$ has the dimension\/ $\,d\,$ and is foliated
by circle leaves of\/ $\,F\hskip-3.8pt_{\mathcal{M}}\w$.
\item $\mathcal{M}[d']\subseteq\mathcal{M}[d]\,$ whenever\/ $\,d'\hn|\hs d$.
\item $\mathcal{M}[d]\smallsetminus\bigcup_k\w\nnh\mathcal{M}[k]$, with\/
$\,k\,$ ranging over $\,\Delta\nnh_d\w\smallsetminus\{d\}$, equals the union 
of all leaves of\/ $\,F\hskip-3.8pt_{\mathcal{M}}\w$ having the stabilizer 
group\/ $\,(\hskip-1.4ptd\nh/\nh n\nh)\bbZ\hh$.
\item $\mathcal{M}[n]=\mathcal{M}\,$ and\/
$\,\mathcal{M}[1]=\mathrm{pr}(\tvs\hn')$.
\item If\/ $\,n\,$ is prime, $\,\mathcal{M}[1]=\mathrm{pr}(\tvs\hn')\,$ is the
only non\-ge\-ner\-ic leaf of\/ $\,F\hskip-3.8pt_{\mathcal{M}}\w$.
\end{enumerate}
\end{proposition}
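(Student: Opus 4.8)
The plan is to realize each $\mathcal{M}[d]$ as the $\,\mathrm{pr}$-image of a rotation-fixed subspace. For $\,d\in\Delta_n\w$ I set
\[
V_d=\{f\in\tvs:f\circ r_{2\pi d/n}=f\}\subseteq\tvs,\qquad\mathcal{M}[d]=\mathrm{pr}(V_d),
\]
where $\,\mathrm{pr}:\tvs\to\mathcal{M}=\tvs\nnh/\hh\bg\,$ is the projection. First I would record the elementary algebra of the $\,V_d$. Since $\,r_{2\pi d/n}\,$ generates the order-$(n/d)$ subgroup of $\,H\nnh$, the space $\,V_d\,$ consists of the functions that are constant on the orbits of that subgroup, so $\,\dim V_d=d$; it is $\,H\nnh$-in\-var\-i\-ant because $\,r_{2\pi d/n}\,$ commutes with every element of $\,H$; and it is an $\,L$-sub\-space (Definition~\ref{lsbsp}), the integer-valued orbit-indicator functions lying in $\,L\,$ and spanning $\,V_d$. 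Moreover $\,\tvs\hn'=V_1\subseteq V_d\subseteq V_n=\tvs\nnh$, and $\,V_{d'}\subseteq V_d\,$ whenever $\,d'\hn|\hs d\,$ (an $\,r_{2\pi d'/n}$-fixed function is automatically $\,r_{2\pi d/n}$-fixed once $\,d'\hn|\hs d$).

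With this in hand, (i), (ii) and (iv) are essentially formal. Each $\,V_d\,$ being a nonzero $\,H\nnh$-in\-var\-i\-ant $\,L$-sub\-space, Lemma~\ref{cplvs} and Theorem~\ref{restr} show that $\,\mathcal{M}[d]=\mathrm{pr}(V_d)\,$ is a compact connected totally geodesic flat sub\-man\-i\-fold of $\,\mathcal{M}\,$ of dimension $\,d$, and that the restriction of $\,F\hskip-3.8pt_{\mathcal{M}}\w\,$ (the foliation by $\,\mathrm{pr}$-im\-ages of cosets of $\,\tvs\hn'\subseteq V_d$) foliates it by circle leaves; this is (i). The inclusions $\,V_{d'}\subseteq V_d\,$ give $\,\mathcal{M}[d']\subseteq\mathcal{M}[d]$, i.e.\ (ii), while $\,V_1=\tvs\hn'$ and $\,V_n=\tvs\,$ yield (iv).

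The substantive point is (iii), and I would attack it through the stabilizer bookkeeping already set up in Proposition~\ref{biebg}(d)--(e). A circle leaf lying in $\,\mathcal{M}[d]\,$ is $\,\mathrm{pr}(\tvs\hn'\nnh+f)\,$ for some $\,f\in V_d$; since $\,f\circ r_{2\pi d/n}=f$, the homo\-mor\-phism $\,\bg\to(\hskip-1.9pt1\nh/\nh n\nh)\bbZ\,$ sends its stabilizer group onto a sub\-group containing $\,(\hskip-1.4ptd\nh/\nh n\nh)\bbZ$, hence onto $\,(\hskip-1.4ptd'\nh/\nh n\nh)\bbZ\,$ with $\,d'\hn|\hs d$. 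The goal is to show this leaf has stabilizer exactly $\,(\hskip-1.4ptd\nh/\nh n\nh)\bbZ\,$ precisely when it lies in no $\,\mathcal{M}[k]\,$ with $\,k\in\Delta\nnh_d\w\smallsetminus\{d\}$; equivalently, that a leaf whose stabilizer image is $\,(\hskip-1.4ptd'\nh/\nh n\nh)\bbZ\,$ already lies in $\,\mathcal{M}[d']=\mathrm{pr}(V_{d'})$. This reduction is the main obstacle: unwinding it via (\ref{tpf})--(\ref{avg}), it amounts to showing that whenever $\,f\circ r_{2\pi d'/n}-f\,$ is integer-valued, some $\,\gamma\in\bg\,$ carries $\,f\,$ into $\,V_{d'}$, i.e.\ that the integral difference equation $\,\psi\circ r_{2\pi d'/n}-\psi=f-f\circ r_{2\pi d'/n}\,$ is solvable with $\,\psi\in\bbZ_0^{\hskip-.4ptH}$. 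I expect the integer solvability to follow by integrating around each orbit of the order-$(n/d')$ subgroup, the orbit-sum obstruction telescoping to zero; the delicate part, which I anticipate will absorb the real work, is controlling the averaging constraint $\,\psi\nnh_{\mathrm{avg}}\w\nh=0\,$ so that the solution genuinely lies in $\,\bbZ_0^{\hskip-.4ptH}$ (and, if necessary, exploiting the remaining freedom in the choice of the family $\,\{\mathcal{M}[d]\}$). Once every such leaf is placed in its correct stratum, (ii), the stabilizer computation, and the nesting combine to give (iii).

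Finally, (v) is an immediate corollary: for prime $\,n\,$ the only members of $\,\Delta_n\w$ are $\,1\,$ and $\,n$, so the only proper (nongeneric) stratum is $\,\mathcal{M}[1]=\mathrm{pr}(\tvs\hn')$, every other leaf being generic.
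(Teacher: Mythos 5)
Your family is not the one the paper constructs, and the step you single out as ``the delicate part'' of (iii) is not merely delicate --- it genuinely fails. The averaging constraint is a real obstruction: on each orbit of the cyclic group generated by $\,r_{2\pi d'/n}$ a solution $\,\psi\,$ of the difference equation exists in $\,\bbZ^H$ and is unique up to adding an integer constant on that orbit, so the total sum of $\,\psi\,$ can only be shifted by multiples of $\,n/d'\nnh$; when the particular solution has total sum not divisible by $\,n/d'$ there is no $\,\gamma\in\bg\,$ carrying $\,f\,$ into $\,V_{d'}$ (allowing $\,t\ne0\,$ does not help, since precomposition with $\,r_{2\pi t}\,$ preserves all the relevant orbit sums and the total sum). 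This already happens for $\,n=2$, the classical Klein bottle, with $\,d'=1$: the coset $\,\tvs\hn'+v\,$ with $\,v(1)=\tfrac12=-v(-1)\,$ is stabilized by $\,(\tfrac12,(1,-1))\in\bg$, so its leaf has stabilizer group $\,(1/n)\bbZ\,$ and by (iii) with $\,d=1\,$ must lie in $\,\mathcal{M}[1]$; yet it does not lie in $\,\mathrm{pr}(V_1)=\mathrm{pr}(\tvs\hn')$, because $\,\pm v+\psi\,$ is never a constant function for $\,\psi\in\bbZ_0^{H}$. So your $\,\mathcal{M}[d]=\mathrm{pr}(V_d)\,$ satisfies (i), (ii), (iv) but provably not (iii). (A minor side remark: the orbit-indicator functions you invoke are not themselves in $\,L\,$ for $\,d>1$, since their average is $\,1/d$; the span claim for $\,V_d\,$ still holds using integer combinations with orbit-value sum divisible by $\,d$.)

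The paper sidesteps the solvability question entirely: it \emph{defines} $\,\mathcal{M}[d]\,$ as the union of all circle leaves whose stabilizer group contains $\,(\hskip-1.4ptd\nh/\nh n\nh)\bbZ$ --- equivalently, the $\,\mathrm{pr}$-image of all cosets $\,\tvs\hn'+v\,$ with $\,v\circ r_{2\pi d/n}-v\,$ integer-valued, i.e.\ the preimage in $\,\tvs\hn''$ of the fixed-point set of $\,r_{2\pi d/n}$ acting on the quotient torus of circle-valued functions --- so that (iii) holds essentially by construction; this set is in general strictly larger than your $\,\mathrm{pr}(V_d)$. You should be aware, however, that the same Klein-bottle computation shows the statement itself is not self-consistent: there are \emph{two} distinct leaves with full stabilizer $\,(1/n)\bbZ\,$ (the two short circles of the Klein bottle, at $\,v=0\,$ and at the $\,v\,$ above), so (iii) for $\,d=1\,$ and the claim $\,\mathcal{M}[1]=\mathrm{pr}(\tvs\hn')\,$ in (iv) (and likewise (v)) cannot both hold for any choice of family; the paper's $\,\mathcal{M}[1]\,$ is a disjoint union of several leaves, while yours satisfies (iv) at the cost of (iii). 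The correct statement requires the paper's larger $\,\mathcal{M}[d]\,$ together with a weakening of (iv), (v) and of the connectedness assertion, and no further work on the integral difference equation will close the gap in your version.
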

\begin{proof}The ze\-ro-av\-er\-age functions $\,h:H\to\bbR/\bbZ\hh$, from the 
group $\,H\nh=\bbZ\nh_n\w\subseteq S^1$ of $\,n$th roots of unity into the
circle $\,\bbR/\bbZ\hh$, form a manifold $\,[\bbR/\bbZ]\nh^H$
dif\-feo\-mor\-phic
to the torus $\,T^{n-1}\nnh$, and the covering projection 
$\,\tvs\hn''\nh\to[\bbR/\bbZ]\nh^H\nnh$, sending a ze\-ro-av\-er\-age function
$\,f:H\to\bbR\,$ to its composition $\,h\,$ with the projection
$\,\bbR\to\bbR/\bbZ\hh$, is obviously equi\-var\-i\-ant relative to the first
action in Proposition~\ref{biebg}(a),
\begin{equation}\label{ach}
\mathrm{the\ action\ of\ }\,H\,\mathrm{\ on\ }\,[\bbR/\bbZ]\nh^H\mathrm{\
given\ by\ }\,(q,h)\mapsto h\circ r\nnh_{2\pi t}\w,
\end{equation}
for $\,t\in(\hskip-1.9pt1\nh/\nh n\nh)\bbZ\,$ with $\,q=e^{2\pi i\hh t}\nh$,
and the homo\-mor\-phism $\,\bg\ni(t,\psi)\mapsto e^{2\pi i\hh t}\in H$.
Thus, (\ref{ach}) is the translational action of $\,H\nh=\bbZ\nh_n\w$ on
mappings $\,H\to\bbR/\bbZ$, and the pre\-im\-ages of its iso\-tropy groups
under the homo\-mor\-phism
$\,(\hskip-1.9pt1\nh/\nh n\nh)\bbZ\ni t\mapsto e^{2\pi i\hh t}\in H$ are
precisely the stabilizer groups in (\ref{tsg}). We now define
$\,\mathcal{M}[d]$, for any $\,d\in\Delta_n\w$, to be the union of
circle leaves of $\,F\hskip-3.8pt_{\mathcal{M}}\w$ having stabilizer groups
contained in $\,\,(\hskip-1.4ptd\nh/\nh n\nh)\bbZ\hh$.
Thus, $\,\mathcal{M}[d]\,$ is the image under the quotient projection
$\,\mathrm{pr}:\tvs\nh\to\tvs\nnh/\hh\bg\,$ of the union of all cosets
$\,v\hs+\nh\tvs\hn'$ for vectors $\,v\in\tvs\hn''$ which the covering
projection 
$\,\tvs\hn''\nh\to[\bbR/\bbZ]\nh^H$ sends to (ze\-ro-av\-er\-age) functions
$\,h:H\to\bbR/\bbZ\,$ having $\,h\circ r\nnh_{2\pi d/n}\w=h$. Since such
$\,h\,$ form a manifold dif\-feo\-mor\-phic  
to the torus $\,T^{d-1}\nnh$, via the assignment to $\,h\,$ of the ze\-ro-sum
$\,d$-tuple consisting of $\,h(e^{2\pi i\hh k/n})\in\bbR/\bbZ$, with
$\,k=0,1,\dots,d-1$, all our claims about $\,\mathcal{M}[d]\,$ easily follow
from
the fact that both projections just mentioned are locally dif\-feo\-mor\-phic.
\end{proof}
The $\,n$-di\-men\-sion\-al generalized Klein bottle, for any $\,n\ge2$, is
an example illustrating the fact 
that the last inclusion of Theorem~\ref{restr}(ii-c) may be proper. 
In fact, the stabilizer group $\,[(\hskip-1.9pt1\nh/\nh n\nh)\bbZ]\times\nnh\{0\}\,$ of
$\,\tvs\hn'\nnh$, mentioned six lines before 
Proposition~\ref{nongn}, although not contained in the lattice 
$\,L$, acts on $\,\tvs\hn'$ by translations. Also, unless $\,n\,$ is prime, 
Proposition~\ref{nongn}(iii) shows that the dependence on $\,u\,$ in the last
line of Lemma~\ref{isolv} is actually possible, as it occurs for
$\,\mathcal{M}\hn'\nnh=\mathcal{M}[k]$, with any
$\,k\in\Delta_n\w\smallsetminus\{n\}$.

\section{Remarks on holonomy}\label{rh}
The correspondence between Bie\-ber\-bach groups and compact flat manifolds
mentioned in Remark~\ref{bijct} has an extension to al\-most-Bie\-ber\-bach
groups and in\-fra-nil\-man\-i\-folds \cite{dekimpe} obtained by using
-- instead of the 
translation vector space of a Euclidean \af\ space -- a connected, simply 
connected nil\-po\-tent Lie group $\,\sg\,$ acting simply transitively 
on a manifold $\,\mathcal{E}$, and replacing the Bie\-ber\-bach group with 
a tor\-sion-free uniform discrete sub\-group 
$\,\bg\,$ of $\,\mathrm{Dif{}f}\,\mathcal{E}$ contained in a 
sem\-i\-di\-rect product (canonically transplanted so as to act on 
$\,\mathcal{E}$) of $\,\sg\,$ and a maximal compact sub\-group of 
$\,\mathrm{Aut}\,\sg\nh$. Here `uniform' means admitting a compact fundamental 
domain, cf.\ Remark~\ref{cptfd}. The analogs of (\ref{exa}) and (\ref{cmp}) 
remain valid, reflecting the fact that any in\-fra-nil\-man\-i\-fold is the 
quotient of a nil\-man\-i\-fold under the action of a finite group $\,H\nnh$.

A somewhat similar picture may arise in some cases where $\,\sg\,$ is not 
assumed nil\-po\-tent. As an example, let  
$\,\sg\cong\mathrm{Spin}\hs(m,1)\,$ be the universal covering group of the 
identity component $\,\sg/\bbZ_2\w\cong\mathrm{SO}^+\nh(m,1)\,$ of the 
pseu\-\hbox{do\hs-}\hskip0ptor\-thog\-onal group of an 
$\,(m+1)$-di\-men\-sion\-al Lo\-rentz\-i\-an vector space $\,\mathcal {L}$, 
$\,m\ge3$. Here $\,\mathcal{E}\hs$ is the \hbox{(two\hs-} fold) universal
covering 
manifold of the or\-tho\-nor\-mal-frame bundle of the future unit 
pseu\-do\-sphere $\,\mathcal{S}\subseteq\hn\mathcal{L}$, isometric to the 
hyperbolic $\,m$-space, and $\,\sg/\bbZ_2\w$ acts on $\,\mathcal{S}$ via 
hyperbolic isometries, leading to an action of $\,\sg\,$ on 
$\,\mathcal{E}$. The or\-tho\-nor\-mal-frame bundles of compact hyperbolic 
manifolds obtained as quotients of $\,\mathcal{S}\,$ give rise to the 
required tor\-sion-free uniform discrete sub\-groups $\,\bg$.

The resulting compact quotient manifolds $\,\mathcal{M}=\mathcal{E}/\hh\bg\,$ 
can be endowed with various interesting Riemannian metrics coming from 
$\,\bg$-in\-var\-i\-ant metrics on $\,\mathcal{E}$. For $\,\bg\,$ and 
$\,\mathcal{E}\hs$ of the preceding paragraph, a particularly natural choice 
of an invariant {\it indefinite\/} metric is provided by the Kil\-ling 
form of $\,\sg\nh$, turning $\,\mathcal{M}\,$ into a compact locally symmetric 
pseu\-\hbox{do\hskip1pt-}\hskip-.7ptRiem\-ann\-i\-an Ein\-stein manifold.

Outside of the Bie\-ber\-bach-group case, however, these metrics are not flat, 
and finite groups $\,H\,$ such as mentioned above cannot serve as their 
holonomy groups. The holonomy interpretation of $\,H\,$ still makes sense, 
though, if -- instead of metrics -- one uses either of the two
$\,\bg$-in\-var\-i\-ant flat connections, with (parallel) torsion,
naturally distinguished on $\,\mathcal{E}$. Here
$\,\mathcal{E}\hs$ is, again, a manifold on which a connected, simply 
connected Lie group $\,\sg\,$ acts simply transitively. Two natural 
bi-in\-var\-i\-ant connections with the stated properties exist on
$\,\sg$, rather than $\,\mathcal{E}$, and are characterized by the
requirement that they make all the left-in\-var\-i\-ant (or,
right-in\-var\-i\-ant) vector fields parallel. Due to their naturality,
these two connections on $\,\sg$ are also invariant under all the Lie-group
auto\-mor\-phisms of $\,\sg\nh$. It is the two connections on $\,\sg$ that
induce, in an obvious way, the ones on $\,\mathcal{E}$, mentioned six lines
earlier.

\setcounter{section}{1}
\renewcommand{\thesection}{\Alph{section}}
\setcounter{theorem}{0}
\renewcommand{\thetheorem}{\thesection.\arabic{theorem}}
\section*{Appendix: Hiss and Szcze\-pa\'n\-ski's reducibility
theorem}\label{hs}
%\setcounter{equation}{0}
%\section{An algebraic proof}\label{ap}
Consider an {\it abstract Bie\-ber\-bach group}, that is, any 
tor\-sion-free group $\,\bg\,$ containing a finitely generated free 
A\-bel\-i\-an normal sub\-group $\,L\,$ of finite index, which is at the 
same time a maximal A\-bel\-i\-an sub\-group of $\,\bg$. As shown by 
Zas\-sen\-haus \cite{zassenhaus}, up to isomorphisms these groups coincide 
with the Bie\-ber\-bach groups of Sect.~\ref{bg}, we again 
summarize their structure using the short exact sequence
\begin{equation}\label{abs}
L\,\to\,\hs\bg\,\to\,H,\hskip12pt\mathrm{where\ }\,\,H\,=\,\bg/\nh L\hh.
\end{equation}
For A\-bel\-i\-an groups $\,G\hskip-1.9pt_1\w,G\hskip-1.5pt_2\w$ and $\,G'$ one 
has canonical isomorphisms
\begin{equation}\label{can}
\bbZ\otimes G\cong G\hh,\hskip7pt(G\hskip-1.9pt_1\w\nnh\oplus G\hskip-1.5pt_2\w)\otimes G'
\cong(G\hskip-1.9pt_1\w\nnh\otimes G')\oplus(G\hskip-1.5pt_2\w\nnh\otimes G')\hh,\hskip7pt
L\otimes\hs\bbQ\cong\mathrm{Hom}\hs(L\nh^*\nnh\nh,\bbQ)\hh,
\end{equation}
where $\,L\nh^*\nh=\mathrm{Hom}\hs(L,\bbZ)\,$ and, for simplicity, $\,L\,$ is 
assumed to be finitely generated and free. In the last case, with a suitable 
integer $\,n\ge0$, there are further noncanonical isomorphisms
\begin{equation}\label{ncn}
\mathrm{a)}\hskip6ptL\cong\bbZ^n,\hskip22pt\mathrm{b)}\hskip6ptL\otimes\hs\bbQ
\cong\bbQ^n\nh,
\end{equation}
while, using the injective homo\-mor\-phism 
$\,L\ni\lambda\mapsto\lambda\otimes\nh1\in L\hn\otimes\hh\bbQ\,$ to treat 
$\,L\,$ as a sub\-group of $\,L\otimes\nh\bbQ$, we see that, under suitably 
chosen identifications (\ref{ncn}),
\begin{equation}\label{crs}
\mathrm{the\ inclusion\ }\,L\subseteq L\otimes\hh\bbQ\,\mathrm{\ corresponds\ 
to\ the\ standard\ inclusion\ }\,\bbZ^n\nh\subseteq\hs\bbQ^n\nh.
\end{equation}
Finally, if $\,L\,$ as above is a (full) lattice in an 
fi\-nite-di\-men\-sion\-al real vector space $\,\tvs\nh$, a further canonical
isomorphic identification arises:
\begin{equation}\label{spn}
L\otimes\hh\bbQ\cong\mathrm{Span}_\bbQ\w L\hh,
\end{equation}
that is, we may view $\,L\otimes\hh\bbQ\,$ as the rational vector subspace of 
$\,\tvs$ spanned by $\,L$.

Let $\,\bg\,$ now be an abstract Bie\-ber\-bach group. Hiss and
Szcze\-pa\'n\-ski 
\cite[the corollary in Sect.\ 1]{hiss-szczepanski} proved that, if $\,L\,$ 
in (\ref{abs}) satisfies (\ref{ncn}.a) with $\,n\ge2$, then {\it the} 
(obviously $\,\bbQ$-lin\-e\-ar) {\it action of\/} $\,H\,$ {\it on\/} 
$\,L\otimes\hs\bbQ\,$ {\it must be reducible}, in the sense of admitting 
a nonzero proper invariant rational vector sub\-space $\,\mathcal{W}\nh$.

Next, using (\ref{crs}), we may write 
$\,L=\bbZ^n\nh\subseteq\hs\bbQ^n\nh=L\otimes\hh\bbQ$, so that 
$\,\mathcal{W}\subseteq\hs\bbQ^n\subseteq\rn\nh$, and the closure 
$\,\tvs\hn'$ of $\,\mathcal{W}\hs$ in $\,\rn$ has the real dimension 
$\,\dim_\bbQ\w\hskip-3pt\mathcal{W}\hs$ (any $\,\bbQ$-ba\-sis of 
$\,\mathcal{W}\hs$ being, obviously, an $\,\bbR$-ba\-sis of $\,\tvs\hn'$). By 
clearing denominators, one can replace such a $\,\bbQ$-ba\-sis with one 
consisting of vectors in $\,L=\bbZ^n\nh$, and so, by Lemma~\ref{lttce}(a), 
the intersection $\,L\nh'\nh=L\cap\mathcal{W}\nh=L\cap\tvs\hn'$ is a lattice 
in $\,\tvs\hn'\nnh$. We thus obtain (\ref{qdr}).

A stronger version of Hiss and Szcze\-pa\'n\-ski's reducibility theorem,
established more recently by Lu\-tow\-ski \cite{lutowski}, states that 
the rational holonomy representations of any compact flat manifold other than 
a torus has at least two non\-e\-quiv\-a\-lent irreducible 
sub\-rep\-re\-sen\-ta\-tions.

\begin{acknowledgement}
Both authors' research was supported in part by a FAPESP\nh-\hs OSU 2015 
Regular Research Award (FAPESP grant: 2015/50265-6). The authors wish to thank 
Andrzej Szcze\-pa\'n\-ski for helpful comments. We also greatly appreciate
suggestions made by anonymous referees of earlier versions of the
manuscript, which allowed us to make the exposition much easier to follow. 
\end{acknowledgement}
\ethics{Competing Interests}{The authors have no conflicts of interest to
declare that are relevant to the content of this chapter.}

\end{document}